\def\crn#1#2{{\vcenter{\vbox{
        \hbox{\kern#2pt \vrule width.#2pt height#1pt
           }
          \hrule height.#2pt}}}}
\def\intprod{\mathchoice\crn54\crn54\crn{3.75}3\crn{2.5}2}
\def\into{\mathbin{\intprod}}
\newcommand{\stopthm}{\hfill$\square$\medskip}
\newcommand{\pa}{\partial}
\newcommand{\na}{\nabla}
\newcommand{\Ric}{\operatorname{Ric}}
\newcommand{\grad}{\operatorname{grad}}
\newcommand{\Lo}{\mathring{L}}
\newcommand{\Span}{\operatorname{span}}
\newcommand{\tr}{\operatorname{tr}}
\newcommand{\D}{\Delta}
\newcommand{\R}{\mathbb R}
\newcommand{\N}{\mathbb N}
\newcommand{\ep}{\epsilon}
\newcommand{\al}{\alpha}
\newcommand{\la}{\lambda}
\newcommand{\ga}{\gamma}
\newcommand{\de}{\delta}
\newcommand{\be}{\beta}
\newcommand{\om}{\omega}
\newcommand{\gh}{\widehat{g}}
\newcommand{\gb}{\overline{g}}
\newcommand{\hb}{\overline{h}}
\newcommand{\kb}{\overline{k}}
\newcommand{\Lb}{\overline{L}}
\newcommand{\Pb}{\overline{P}}
\newcommand{\Qb}{\overline{Q}}
\newcommand{\Gb}{\overline{\Gamma}}
\newcommand{\Jb}{\overline{\mathsf{J}}}
\newcommand{\Tb}{\overline{T}}
\newcommand{\Yb}{\overline{Y}}
\newcommand{\Ybh}{\widehat{\overline{Y}}}
\newcommand{\hh}{\widehat{h}}
\newcommand{\gt}{\widetilde{g}}
\newcommand{\Ph}{\widehat{P}}
\newcommand{\lh}{\widehat{\lambda}}
\newcommand{\wh}{\widehat}
\newcommand{\Pt}{\widetilde{P}}
\newcommand{\Tt}{\widetilde{T}}
\newcommand{\Qt}{\widetilde{Q}}
\newcommand{\gth}{\widehat{\widetilde{g}}}
\newcommand{\rt}{\widetilde{r}}
\newcommand{\xt}{\widetilde{x}}
\newcommand{\htt}{\widetilde{h}} 
\newcommand{\nt}{\widetilde{\nabla}}
\newcommand{\Gat}{\widetilde{\Gamma}}
\newcommand{\Gt}{\widetilde{\Gamma}}
\newcommand{\nb}{\overline{\nabla}}
\newcommand{\Ga}{\Gamma}
\newcommand{\Si}{\Sigma} 
\newcommand{\cL}{\mathcal{L}}
\newcommand{\cH}{\mathcal{H}}
\newcommand{\cS}{\mathcal{S}}
\newcommand{\cK}{\mathcal{K}}
\newcommand{\cA}{\mathcal{A}}
\newcommand{\cI}{\mathcal I}  
\newcommand{\cJ}{\mathcal J}  
\newcommand{\cG}{\mathcal{G}}
\newcommand{\cGt}{\widetilde{\mathcal{G}}}
\newcommand{\cGth}{\widehat{\widetilde{\mathcal{G}}}}
\newcommand{\cU}{\mathcal{U}}
\newcommand{\cV}{\mathcal{V}}
\newcommand{\cW}{\mathcal{W}}
\newcommand{\sh}{\mathrm{h}}
\newcommand{\sg}{\mathrm{g}}
\newcommand{\sF}{\mathsf{F}}
\newcommand{\sG}{\mathsf{G}}
\newcommand{\sP}{\mathsf{P}}
\newcommand{\sD}{\mathsf{D}}
\newcommand{\hhh}{{\hspace{.3mm}}}
\newcommand{\blue}[1]{\color{blue}#1~\color{black}}
\def\sideremark#1{\ifvmode\leavevmode\fi\vadjust{\vbox to0pt{\vss
 \hbox to 0pt{\hskip\hsize\hskip1em
 \vbox{\hsize2cm\tiny\raggedright\pretolerance10000
  \noindent #1\hfill}\hss}\vbox to8pt{\vfil}\vss}}}
\theoremstyle{plain}
\newtheorem{theorem}{Theorem}[section]
\newtheorem{lemma}[theorem]{Lemma}
\newtheorem{proposition}[theorem]{Proposition}
\newtheorem{corollary}[theorem]{Corollary}
\theoremstyle{definition}
\theoremstyle{remark}
\newtheorem{remark}[theorem]{Remark}
\numberwithin{equation}{section}
\title[Extrinsic GJMS Operators for Submanifolds]{Extrinsic GJMS Operators
  for Submanifolds} 
\author{Jeffrey S. Case}
\address{Department of Mathematics, Penn State University\\
University Park, PA 16802, USA}
\email{jscase@psu.edu}
\author{C Robin Graham} 
\address{Department of Mathematics, University of Washington,
Box 354350\\
Seattle, WA 98195-4350, USA}
\email{robin@math.washington.edu}
\author{Tzu-Mo Kuo}
\address{Department of Mathematics, University of California\\
Santa Cruz, CA, 95064, USA}
\email{tkuo6@ucsc.edu}
\begin{document}

\begin{abstract}
We derive 
extrinsic GJMS operators and $Q$-curvatures associated to a
submanifold of a conformal manifold.  The operators are conformally
covariant scalar differential operators on the submanifold with leading   
part a power of the Laplacian in the induced metric.  Upon realizing
the  conformal manifold  as the conformal infinity of an asymptotically 
Poincar\'e--Einstein space and the submanifold as the boundary 
of an asymptotically minimal submanifold thereof,  these  operators arise
as obstructions to 
smooth extension as eigenfunctions of the Laplacian of the induced metric 
on the minimal submanifold.  We derive explicit formulas for the operators
of orders 2 and 4.  We prove factorization formulas when 
the original submanifold is a minimal submanifold of an Einstein manifold.
We also show how to reformulate the construction in terms of the ambient 
metric for the conformal manifold, and use this to prove that the operators
defined by the factorization formulas are conformally invariant for all
orders in all dimensions.  

\end{abstract}

\maketitle

\thispagestyle{empty}

\section{Introduction}\label{intro}

The GJMS operators~\cite{GJMS} are a family of conformally covariant
natural differential operators on a Riemannian manifold with principal part
a power of the Laplacian.  They  are basic objects in conformal geometry 
which arise in many contexts.  In this paper, we construct analogs of the
GJMS operators associated to a submanifold of a conformal manifold. 
These are differential operators on the submanifold which depend on its
extrinsic geometry in the background space.

The original construction of operators in~\cite{GJMS} used the ambient
metric of~\cite{FG2}. Their derivation was reformulated 
in~\cite{GZ} in terms of a Poincar\'e metric. There they arise as 
obstructions
to smoothly extending functions on the conformal manifold
as eigenfunctions of the Laplacian of the Poincar\'e 
metric with prescribed leading order asymptotics.  
In~\cite{GZ}, it was pointed out that the same construction can be
carried out upon replacing the Poincar\'e metric by any asymptotically
hyperbolic metric.  In the general case, the operators still act on
functions on the boundary at infinity and satisfy the same conformal
transformation law with respect to rescaling the boundary metric.  However
they now depend on the Taylor coefficients at the boundary of a
compactification of the asymptotically hyperbolic metric, which in general
need not have any relation to the intrinsic geometry of the boundary
itself. 

In order to construct operators associated to a submanifold of a conformal
manifold, we apply the latter construction to an asymptotically hyperbolic
metric determined asymptotically by the extrinsic geometry of the  
submanifold.  Let $\Sigma$ be a $k$-dimensional   
submanifold of an $n$-dimensional Riemannian manifold 
$(M,g)$.  Let $X=M\times[0,\ep_0)$ for some small
$\ep_0>0$.  We realize 
the conformal class $(M,[g])$ as the conformal infinity of a Poincar\'e
metric $g_+$ on $\mathring{X}$, i.e. a smooth, even asymptotically
hyperbolic approximate solution of the Einstein equation $\Ric(g_+)=-ng_+$
(see~\cite{FG2}).  In turn we realize $\Si$ as the boundary of a smooth
submanifold $Y\subset X$ which is asymptotically minimal with respect to
$g_+$ and even in a suitable sense. 
We equip $\mathring{Y}$ with the metric $h_+$ induced by $g_+$, which is
also asymptotically hyperbolic.  We then derive our operators by applying
the usual construction of~\cite{GZ} on the space $(\mathring{Y},h_+)$.  We call the 
resulting operators {\it (minimal submanifold) extrinsic GJMS operators.}   

Branson's (critical) $Q$-curvature~\cite{B} is another fundamental object
in conformal geometry.  It is a natural scalar defined in even dimensions 
whose conformal transformation law is linear in the log of the conformal
factor.  Branson defined it from the zeroth order terms of the GJMS
operators via analytic continuation.  Just as for the operators, the same    
construction can be carried out on a general asymptotically hyperbolic
manifold.  Applying the construction on $(\mathring{Y},h_+)$, we obtain   
for $k$ even a $Q$-curvature associated to the submanifold
$\Si\subset(M,g)$.      

Our main existence result is the following.

\begin{theorem}\label{gjmsmain}
Let $(M^n,g)$ be a Riemannian manifold and $\Sigma^k\subset M$ a
submanifold, with $n\geq 3$ and $1\leq k\leq n-1$.  
\begin{enumerate}
\item
For the following values of $\ell$, there is a
minimal submanifold extrinsic GJMS operator 
$P_{2\ell}:C^\infty(\Si)\rightarrow C^\infty(\Si)$ of order $2\ell$:  
\begin{enumerate}
\item
$1\leq \ell <\infty$ if $n$ and $k$ are both odd,
\item
$1\leq \ell< n/2$ if $n$ is even and $k$ is odd,
\item \label{Ic}
$1\leq \ell\leq k/2 +1$ if $k$ is even.  (If $\ell = k/2+1$ and $n$ is 
  even, we also assume $n>k+2$.) 
\end{enumerate}
\smallskip
The operator $P_{2\ell}$ is 
formally  self-adjoint, has leading term $(-\D_h)^\ell$
where $h$ is the metric induced on $\Si$ by $g$, 
is natural (as defined in \S\ref{notation}), and satisfies the 
transformation law
\begin{equation}\label{Ptransform}
\widehat{P}_{2\ell} = e^{(-k/2-\ell)\,\om|_\Si}\circ P_{2\ell}\circ
e^{(k/2-\ell)\,\om|_\Si}\, ,
\end{equation}
under a conformal change $\gh=e^{2\om}g$ with~$\om\in C^\infty(M)$. 
\smallskip
\item
If $k$ is even, there is a minimal submanifold 
extrinsic $Q$-curvature $Q_k$, which is a natural scalar on $\Sigma$.
Under conformal change $\gh=e^{2\om}g$, it satisfies   
\begin{equation}\label{Qtransform}
e^{k\,\om|_\Si}\widehat{Q}_k = Q_k  + P_k(\om|_\Si).
\end{equation}
Moreover, $P_k1=0$.  
\end{enumerate}
\end{theorem}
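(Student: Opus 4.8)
The plan is to obtain Theorem~\ref{gjmsmain} by transplanting the Graham--Zworski construction of~\cite{GZ} onto the pair $(\mathring Y,h_+)$. First I would establish the existence and uniqueness, to the relevant order, of the geometric data: given the submanifold $\Si\subset(M,g)$, one constructs a Poincar\'e metric $g_+$ on $\mathring X$ realizing $[g]$ at infinity (this is~\cite{FG2}), and then an asymptotically minimal, ``even'' submanifold $Y\subset X$ with $\pa Y=\Si$. The key point is that the defining conditions for $Y$ (approximate minimality with respect to $g_+$, evenness) determine its Taylor expansion off $\Si$ to the order needed for $P_{2\ell}$, modulo the usual ambiguity that obstructs further extension exactly in the critical dimension; this is where the restriction $\ell\le k/2+1$ in case~(\ref{Ic}) and the parity dichotomy in~(a),(b) enter. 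The induced metric $h_+$ on $\mathring Y$ is then asymptotically hyperbolic of dimension $k+1$ with conformal infinity $(\Si,[h])$, $h$ the metric induced by $g$.

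Next I would run the scattering/formal-solution argument of~\cite{GZ} verbatim on $(\mathring Y,h_+)$. For $s=k/2+\ell$ not an exceptional value one seeks a formal solution $u\sim r^{k-s}F + r^{s}G$ of $(\Delta_{h_+}+s(k-s))u=0$ with $F|_{\Si}=f$ prescribed; the obstruction to solving order by order with $G$ smooth appears precisely at order $r^s$, and $P_{2\ell}f$ is defined (up to the standard normalizing constant) as the coefficient of the obstruction, equivalently via the renormalized scattering operator $S(s)$ evaluated at $s=k/2+\ell$. Formal self-adjointness and the leading term $(-\Delta_h)^\ell$ follow as in~\cite{GZ} from the structure of the indicial operator of $\Delta_{h_+}$ and the self-adjointness of the Laplacian of $h_+$; the transformation law~\eqref{Ptransform} follows because a conformal rescaling $\gh=e^{2\om}g$ rescales the boundary metric $h$ on $\Si$ by $e^{2\om|_\Si}$ while the interior metric $h_+$ (hence $S(s)$) is unchanged, and $S(s)$ intertwines the rescaling with the stated conformal weights $-k/2-\ell$ and $k/2-\ell$. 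The key subtlety is that $h_+$ is genuinely canonically attached to $(\Si,M,g)$ only to finite order, so one must check that the order to which $P_{2\ell}$ depends on $h_+$ is within that controlled range for each admissible $\ell$ — exactly the bookkeeping behind the three cases.

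For part~(2), with $k$ even and $\ell=k/2$ the value $s=k$ is exceptional, so $P_k$ is the critical operator obtained by the Branson-type analytic continuation of~\cite{GZ}: one considers the family $P_{2\ell}$ (or $S(s)1$) near $s=k$, and $Q_k$ is extracted as the finite part, $Q_k = c\lim_{s\to k} S(s)1$ for the appropriate constant, or equivalently from the $\log r$-coefficient in the formal solution with $f=1$. That $P_k1=0$ is immediate since constants solve the eigenvalue equation at $s=k$ with $F\equiv 1$, $G\equiv 0$, so there is no obstruction. The transformation law~\eqref{Qtransform} follows by differentiating the operator transformation law~\eqref{Ptransform} at the critical weight, exactly as Branson's original derivation and its hyperbolic reformulation in~\cite{GZ}: write $\gh_t=e^{2t\om}g$, apply~\eqref{Ptransform} to the constant function, differentiate in $t$ at $t=0$, and use $P_k1=0$ together with the leading-term normalization to identify the inhomogeneous term as $P_k(\om|_\Si)$. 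Naturality in all cases is the assertion that every coefficient is a universal polynomial in the jets of $g$ and the second fundamental form of $\Si$, which one reads off from the algorithmic nature of the formal-solution recursion on $(\mathring Y,h_+)$, whose inputs are polynomial in the Taylor coefficients of $g_+$ and of the embedding $Y$, themselves polynomial (by~\cite{FG2} and the minimality recursion) in the jets of $g$ along $\Si$.

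I expect the main obstacle to be the careful verification in case~(\ref{Ic}) — and especially the boundary case $\ell=k/2+1$ — that the extra order of the embedding $Y$ and of $g_+$ that one is allowed to use is precisely enough, but not more than is canonically determined: this is where the extra hypothesis $n>k+2$ is needed, and it requires a delicate matching of the obstruction orders for $g_+$ (the FG obstruction tensor, relevant when $n$ is even) against the obstruction order for the minimal extension of $Y$. The rest is a faithful port of~\cite{GZ}.
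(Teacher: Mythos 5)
Your overall strategy is the paper's: realize $(\Si,[h])$ as the conformal infinity of the metric $h_+$ induced on an even asymptotically minimal extension $Y$ of $\Si$ in an even Poincar\'e metric, run the Graham--Zworski obstruction construction on $(\mathring{Y},h_+)$, and control the bookkeeping of how many Taylor coefficients of $h_+$ the order-$2\ell$ operator sees versus how many are canonically determined. For cases (a), (b), and (c) with $\ell\le k/2$ this is exactly what the paper does, and your sketch of self-adjointness, the leading term, the covariance law (via diffeomorphism invariance of $g_+$, $Y$, and hence $h_+$ up to the relevant order), the critical $Q_k$ and $P_k1=0$, and naturality is faithful to that argument.

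However, there is a genuine gap in the boundary case of (1c), $\ell=k/2+1$. You frame the issue as verifying that ``the extra order of the embedding $Y$ and of $g_+$ that one is allowed to use is precisely enough, but not more than is canonically determined,'' i.e.\ as a matching of obstruction orders. That matching fails: when $k$ is even the minimal extension $U_r$ is undetermined at order $r^{k+2}$, and this ambiguity genuinely enters the induced metric at order $r^{k+2}$, which is exactly the order that the construction of $P_{k+2}$ can see (the hypothesis $n>k+2$ only removes the separate ambiguity in $g_r$ when $n$ is even; it does nothing about the ambiguity in $U_r$). So no count of determined orders can close this case, and your proposal contains no mechanism to do so. The paper's resolution requires two structural facts that are absent from your outline: first, writing $h_+$ in normal form with coefficients $\htt_{2j}$, the operator $P_{2\ell}$ depends on the top coefficient $\htt_{2\ell}$ only through its trace $\tr\htt_{2\ell}$ (Lemma~\ref{Qtrace}, obtained by differentiating the recursion $\ell-1$ times and isolating where $\pa_\rho^\ell\sh_\rho$ can appear); second, the contribution of the undetermined coefficient $U_{(k+2)}$ to $\htt_{k+2}$ is exactly $-2\mathring{L}_{\al\be\al'}U^{\al'}_{(k+2)}$, which is trace-free since $\mathring{L}$ is (Lemma~\ref{Udependence}, whose proof tracks the graph substitution in \eqref{hbar} and a further change to normal form). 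Only the combination of these two facts shows that $U_{(k+2)}$ drops out of $P_{k+2}$; without them, the existence claim for $\ell=k/2+1$ is unproved. (A minor additional imprecision: under $\gh=e^{2\om}g$ the metric $h_+$ is not literally unchanged, only invariant up to an even diffeomorphism to the finite orders of Proposition~\ref{orders}, which is what actually yields \eqref{Ptransform}; your phrasing should be adjusted accordingly, though the mechanism you invoke is the right one.)
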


Equation~\eqref{Qtransform}, the formal self-adjointness of $P_k$, and the
fact that $P_k1=0$ imply that if $\Si$ is 
compact, then $\int_\Si Q_k\,da$ is a conformal invariant.
This invariant is a multiple of the higher-dimensional Willmore energy of
$\Si$ studied in \cite{GR,Z,M}.  See Remark~\ref{intQ}.   

The transformation law \eqref{Ptransform} can be interpreted as  
saying that for fixed $\ell$, the $g$-dependent family of operators
$P_{2\ell}$ defines a single invariant operator on conformal densities on
$\Si$.
The zeroth order terms of the operators $P_{2\ell}$ for $2\ell \neq k$
define non-critical curvatures $Q_{2\ell}$ which we also study.  

In this paper, by a submanifold we mean an embedded submanifold.  But since
the construction is local and an immersed submanifold is locally embedded,
Theorem 1.1 also holds on an immersed submanifold $i:\Si\rightarrow M$.
Likewise, Theorems~\ref{factorization} and \ref{Einsteininvariance} below
apply to immersed submanifolds.   

Note that the transformation laws \eqref{Ptransform} and \eqref{Qtransform}
only involve $\om|_\Si$.  A consequence is that the $P_{2\ell}$ and 
$Q_k$ only depend on the conformal class $[g]$ on $M$ near $\Si$ and
the representative $h$ on $\Si$.  That is, they are independent of the way
that the representative $h$ is extended off of $\Si$ to a metric $g$ in the
conformal class on $M$.  This observation is pertinent to the application
of our results to immersed submanifolds in the follow-up paper \cite{CGKTW}
described below.  When we view 
the $P_{2\ell}$ and $Q_k$ as determined by $(M,[g])$ and a choice of 
representative $h$ on $\Si$, in the immersed case $h$ can be any 
representative, allowing rescalings by any positive function in
$C^\infty(\Si)$, not just the pullback of a representative $g$ on $M$. 
In this setting \eqref{Ptransform} and 
\eqref{Qtransform} hold for any $\om\in C^{\infty}(\Si)$, again not just
the pullback of a function on $M$.  

We usually write as if we are working in Riemannian signature.  But
everything in this paper is formal, so is valid for metrics $g$ of general
signature.  If $g$ has mixed signature, it is assumed that the submanifold
$\Si$ is such that $g|_{T\Si}$ is everywhere nondegenerate.  A
nondegenerate submanifold is called minimal if its mean  
curvature vector vanishes.  

The restrictions in Theorem~\ref{gjmsmain} when $n$ or $k$ is even
arise from the fact that the 
extension problems for Poincar\'e metrics and minimal submanifolds
are obstructed at finite order in these cases.  The original GJMS
construction is similarly obstructed in even dimensions.  The instance
$\ell = k/2+1$ of case~(\ref{Ic}) is subtle.   Unlike the other cases, the ambiguity
in the minimal submanifold expansion does enter into the induced metric at
the order which could affect $P_{k+2}$.  However this ambiguity does not  
contribute in the derivation of $P_{k+2}$; see \S\ref{formulas} below for  
details.  

There are other constructions of operators and $Q$-curvatures on
submanifolds of conformal manifolds.

The simplest is just to forget  
about the extrinsic geometry.  That is, consider the induced metric $h$ on  
$\Si$ and take the usual GJMS operators and $Q$-curvatures determined by
$h$.  We call these the {\it intrinsic} operators and $Q$-curvatures.
Unfortunately, in general these do not satisfy the factorization identities
given in Theorem~\ref{factorization} below, which are crucial for the
application we have in mind.
However, as we show in \S\ref{gjmssection}, our operators coincide with the
intrinsic operators for umbilic submanifolds of locally conformally flat
spaces. 

In \cite{GoW1,GoW4}, a calculus on conformally compact manifolds is
developed, including a construction of families of operators and 
$Q$-curvatures on the conformal infinity as well as generalizations
thereof to operators acting on sections of more general vector bundles.
In the setting of a hypersurface $\Si\subset (M,[g])$, 
the papers~\cite{GoW2,GoW3,GoW5,BGW} apply this general construction to a 
distinguished asymptotically hyperbolic representative of the conformal
class $(M,[g])$ to derive and study natural extrinsic operators and
$Q$-curvatures on $\Si$ analogous to those in Theorem~\ref{gjmsmain}.  This 
distinguished representative is an asymptotic solution of the singular
Yamabe problem, i.e.\ a metric which is asymptotically hyperbolic along
$\Si$ with asymptotically constant scalar curvature $-n(n+1)$.   
It follows from explicit formulas for
our operators $P_2$ and $P_4$ derived in Theorems~\ref{minimalops} and
\ref{intrinsicformulas} that our operators in the case of  
hypersurfaces are different from the Gover--Waldron singular Yamabe
extrinsic operators.  In particular, the singular Yamabe extrinsic
operators also do not satisfy the factorization identities.  Unlike our   
construction via minimal submanifold extension, the construction via the
singular Yamabe problem also produces operators of odd order.
Further studies of the singular Yamabe extrinsic operators and
$Q$-curvature can be found in \cite{JO,CMY,J}.  
The paper~\cite{AGW} uses minimal extension into
a singular Yamabe space to 
construct operators, $Q$-curvatures, and associated boundary transgression
curvatures when the hypersurface $\Si\subset M$ itself has a boundary.  

The singular Yamabe problem and therefore the 
construction of operators using it are obstructed at finite order in all
dimensions.  But in \cite{GoW3}, a tractor construction is applied to the 
highest order operator produced by the singular Yamabe construction to
produce operators of all even orders in all dimensions when $\Si$ is a
hypersurface.  This is in contrast to the situation for the original GJMS
operators, where it is known that operators of higher orders do not exist 
\cite{Gr,GH}.   It would be interesting to determine whether such operators
exist for all $\ell$ in higher codimension when $k$ and/or $n$ is even.

Suppose now that $g$ is Einstein.
In this case a  Poincar\'e metric  can be written explicitly.
If~$\Ric(g)=\la (n-1)g$, then the Schouten tensor is 
given by $\mathsf{P}_{ij}=\frac\la2 g_{ij}$, 
and a   Poincar\'e metric for~$g$ is~\cite{FG2}
\begin{equation}\label{einsteinform}
g_+ = r^{-2}(dr^2 + (1-\tfrac14 \la r^2)^2 g)
\, .
\end{equation}
We call this the {\it canonical Poincar\'e
metric} associated to the Einstein metric $g$.  This explicit identification 
of $g_+$ leads to a  
factorization formula for the GJMS operators for an Einstein metric as a
product of second order operators of the form $\D +c$.
It also leads to the conclusion that for conformal 
classes containing an Einstein metric, the operators defined by the
factorization formula are invariantly associated to the conformal class for
all $\ell \geq 1$ in all dimensions.  See~\cite{FG2}.  Another treatment of
these results is contained in \cite{Go}.  

A minimal extension $Y$ can
also be written explicitly if $\Si\subset M$ is a minimal submanifold with
respect to the Einstein metric $g$.  Namely, it was  
observed in the proof of Proposition~4.5 of~\cite{GR} 
that $Y=\Si\times [0,\ep_0)\subset X$ is a minimal extension of $\Si$ with respect
to $g_+$   (we recall this argument in \S\ref{gjmssection}). We call 
$\Si\times [0,\ep_0)$ the {\it canonical minimal extension} of the minimal
submanifold $\Si$ of the Einstein manifold $(M,g)$.   
The next theorem establishes that the same factorization formula holds for
the minimal submanifold extrinsic GJMS operators.  We regard this as a
fundamental feature of these operators.        
 
\begin{theorem}\label{factorization}
Suppose that $\Ric(g) = \la (n-1)g$ and $\Si$ is minimal in $(M,g)$.
Let $g_+$ be the canonical Poincar\'e metric~\eqref{einsteinform} and let
$Y=\Si\times [0,\ep_0)$ be the canonoical minimal extension of $\Si$.  
Denote by $h$ the metric on $\Si$ induced by $g$ and by $h_+$ the metric on
$\mathring{Y}$ induced by $g_+$.  Let $\ell\in \N$.  The operators produced
by the GJMS construction for $(\mathring{Y},h_+)$ are  
\begin{equation}\label{Pfactor} 
P_{2\ell} = \prod_{j=1}^\ell (-\D_h + \la c_j),\qquad 
c_j=(\tfrac{k}{2}+j-1)(\tfrac{k}{2}-j).
\end{equation}
If $k$ is even, then 
  \[
Q_k 
=\la^{k/2}(k-1)!
\, .  
\]
\end{theorem}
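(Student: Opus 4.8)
The plan is to make the metric $h_+$ on the canonical minimal extension completely explicit, observe that it has exactly the shape of a canonical Poincar\'e metric \eqref{einsteinform} but over a base that need not be Einstein, and then run the Graham--Zworski construction on $(\mathring{Y},h_+)$ by separation of variables, mirroring the known derivation of the Einstein factorization formula for the intrinsic GJMS operators in \cite{FG2}. First, since $Y=\Si\times[0,\ep_0)$ sits inside $X=M\times[0,\ep_0)$ as a product, $TY=T\Si\oplus\R\pa_r$, and the canonical Poincar\'e metric restricts on $\mathring{Y}$ to
\[
h_+=r^{-2}\bigl(dr^2+(1-\tfrac14\la r^2)^2 h\bigr),\qquad h=g|_{T\Si}.
\]
Thus $h_+$ is an even asymptotically hyperbolic metric with conformal infinity $(\Si,[h])$ and geodesic defining function $r$ adapted to the representative $h$, so the Graham--Zworski construction applies to $(\mathring{Y},h_+)$ and produces $P_{2\ell}$ and $Q_k$ relative to $h$; moreover the extension side of the construction is unobstructed since $Y$ is an exact (not merely asymptotic) minimal submanifold.

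The heart of the argument is the radial separation. Writing $\psi(r)=1-\tfrac14\la r^2$, a direct computation gives
\[
\Delta_{h_+}=r^2\pa_r^2+\Bigl(-(k-1)r+k\,r^2\tfrac{\psi'}{\psi}\Bigr)\pa_r+\tfrac{r^2}{\psi^2}\,\Delta_h,
\]
so $\Delta_{h_+}$ commutes with $\Delta_h$. Consequently the formal eigenfunction problem $-\Delta_{h_+}u=s(k-s)u$ with $u\sim r^{k-s}\bigl(f+O(r^2)\bigr)$, $f\in C^\infty(\Si)$, is solved order by order in powers of $r^2$, and $\Delta_h$ enters the recursion only as a parameter (equivalently, decomposing $f$ into $\Delta_h$-eigenfunctions reduces it to an explicitly solvable ODE in $r$). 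This is precisely the computation carried out in \cite{FG2} (see also \cite{Go}) to factor the intrinsic GJMS operators of an Einstein metric: the \emph{only} feature of the base used there is that the compactified metric has this warped-product form, which $h_+$ shares, and the Einstein condition on the base metric itself is never invoked. Reading off the obstruction at order $r^{k-s+2\ell}$ when $s=\tfrac k2-\ell$, and substituting $(n,g,\Delta_g)\mapsto(k,h,\Delta_h)$ in the resulting degree-$\ell$ polynomial in $-\Delta_h$, yields \eqref{Pfactor} with $c_j=(\tfrac k2+j-1)(\tfrac k2-j)$; in particular the expansion is unobstructed at every order, which is why $\ell\in\N$ is unrestricted here.

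For the $Q$-curvature, the same explicit solution computes the scattering operator of $h_+$ and hence extends the family $P_{2\gamma}$ to all $\gamma\in\C$. Since $\Delta_h 1=0$, the factorization gives $P_{2\ell}1=\la^{\ell}\prod_{j=1}^{\ell}c_j=\la^{\ell}\,\Gamma(\tfrac k2+\ell)/\Gamma(\tfrac k2-\ell)$ for $\ell\in\N$ (the second equality by the standard Pochhammer identities), and more generally $P_{2\gamma}1=\la^{\gamma}\,\Gamma(\tfrac k2+\gamma)/\Gamma(\tfrac k2-\gamma)$. Branson's critical $Q$-curvature is recovered from this family by analytic continuation as $\gamma\to k/2$ --- in the Poincar\'e-metric normalization of \cite{GZ}, $Q_k=\lim_{\gamma\to k/2}\tfrac{2}{k-2\gamma}\,P_{2\gamma}1$ --- and since $1/\Gamma(\tfrac k2-\gamma)$ has a simple zero at $\gamma=k/2$ with derivative $-1$ in $\gamma$ there, this limit equals $\la^{k/2}\Gamma(k)=\la^{k/2}(k-1)!$, exactly as in the intrinsic Einstein case of \cite{FG2} with $n$ replaced by $k$.

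The one genuinely substantive point is in the middle step: one must verify that the derivation of the Einstein factorization formula in \cite{FG2} goes through verbatim when the base metric of the warped-product compactification $r^{-2}(dr^2+\psi(r)^2 h)$ is arbitrary, i.e.\ that the Einstein hypothesis on the base is truly superfluous in that argument (it enters only in \emph{producing} such a compactification, not in solving the eigenfunction problem once one has it). I expect this to be the main --- though essentially routine --- obstacle; everything else is bookkeeping with the Gamma function and with the definitions entering the Graham--Zworski construction.
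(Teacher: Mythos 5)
Your proposal is correct and follows essentially the same route as the paper: one writes $h_+=r^{-2}\bigl(dr^2+(1-\tfrac14\la r^2)^2h\bigr)$ explicitly and observes that the derivation of the Einstein factorization in Chapter 7 of \cite{FG2} uses only that the base metric of this warped-product compactification is independent of $r$ (the Einstein condition enters solely in producing this form), so it applies verbatim with $(n,\D_g)$ replaced by $(k,\D_h)$, and $Q_k=\la^{k/2}(k-1)!$ then follows from the constant term, which you recover equivalently by continuation in the order via the scattering family rather than the paper's continuation in $k$. The only blemish is a harmless convention slip (the obstructed branch corresponds to $s=\tfrac k2+\ell$, i.e.\ leading asymptotics $r^{k/2-\ell}$, not $s=\tfrac k2-\ell$ with $u\sim r^{k-s}$), which does not affect the argument.
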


In the case that $g$ is Einstein and $\Si\subset (M,g)$ is minimal,
\eqref{Pfactor} gives a formula for the operator $P_{2\ell}$  
for $\ell$ in the ranges stated in Theorem~\ref{gjmsmain}.  The next theorem
shows that for all~$\ell\geq 1$ in all dimensions, the operators
\eqref{Pfactor} satisfy~\eqref{Ptransform} under conformal change to
another Einstein metric for which $\Si$ is also minimal (if there is
another such Einstein metric).  This result can be used to define 
consistently operators via~\eqref{Ptransform} for non-Einstein 
metrics in the same conformal class.  

\begin{theorem}\label{Einsteininvariance}
Let $(M,g)$ be Einstein with $\Ric(g) = \la (n-1)g$ and let
$\gh=e^{2\om}g$ be a conformally related Einstein metric with
$\Ric(\gh)=\widehat{\la}(n-1)\gh$.  Suppose  
$\Si\subset M$ is minimal relative to both $g$ and $\gh$.  Define
$P_{2\ell}$ by~\eqref{Pfactor}.  Let $\widehat{h}$ denote the metric
induced on $\Si$ by $\gh$ and define $\widehat{P}_{2\ell}$ by
\eqref{Pfactor} with $h$ replaced by $\widehat{h}$ and $\la$ replaced by
$\lh$.  Then~\eqref{Ptransform} is valid for all~$\ell\geq 1$.        
\end{theorem}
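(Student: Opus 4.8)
The plan is to reduce Theorem~\ref{Einsteininvariance} to a direct computation with the explicit factorized operators \eqref{Pfactor}. The key point is that two conformally related Einstein metrics in which $\Si$ is minimal are highly constrained: the conformal factor $\om$ (and its restriction to $\Si$) is essentially a function built from an eigenfunction of $\D_h$. First I would recall the classical fact (going back to Brinkmann) that if $g$ and $\gh=e^{2\om}g$ are both Einstein of dimension $n\ge 3$, then $\fe := e^{-\om}$ satisfies $\na^2\fe = \mu g$ for some function $\mu$ on $M$ (the Hessian is pure trace), and moreover $\mu$ is itself an affine function of $\fe$ because tracing and using the Einstein conditions for $g$ and $\gh$ gives $\mu = -\tfrac{\la}{2n}(n\fe) + \text{const}$ — more precisely $\na^2\fe = -\tfrac{\la}{2}\fe\, g + c\, g$ for a constant $c$, and the relation between $\la,\lh,c$ is fixed. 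I would extract from this the single scalar identity relating $\lh$, $\la$, and the function $\fe|_\Si$ that I will need.

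The second step is to restrict all of this to $\Si$. Since $\Si$ is minimal with respect to $g$, the second fundamental form is trace-free, so pulling back $\na^2\fe = (-\tfrac{\la}{2}\fe+c)g$ and taking the tangential part gives $\na^2_h(\fe|_\Si) = (-\tfrac{\la}{2}\fe|_\Si + c)\,h$ on $\Si$: that is, $\ph := \fe|_\Si$ is again an eigenfunction-type solution, $\na_h^2 \ph = \mu_\Si h$ with $\mu_\Si$ affine in $\ph$. In particular $\D_h \ph = k\mu_\Si = -\tfrac{k\la}{2}\ph + kc$, so $\D_h$ acts on $\ph$ (and on $1$) in a controlled way. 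Likewise $\hh = e^{2\om}h$ on $\Si$, so $\ph^{-2}\hh = h$ and $\ph$ plays on $\Si$ exactly the role that $e^{-\om}$ plays on $M$. This is the structural input that makes the factorized operators transform correctly.

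The third step is the algebraic verification. I want to show
\[
e^{(-k/2-\ell)\om}\circ \widehat{P}_{2\ell}\circ e^{(k/2-\ell)\om}
 \;=\; P_{2\ell}
\]
as operators, where $\widehat{P}_{2\ell}=\prod_{j=1}^\ell(-\D_{\hh}+\lh c_j)$ and $P_{2\ell}=\prod_{j=1}^\ell(-\D_h+\la c_j)$. The natural approach is an induction on $\ell$: using the standard conformal transformation law of the single second-order operator $-\D_h + \tfrac{k-1}{?}\cdots$ is not quite available because $-\D_h+\la c_j$ is not individually conformally covariant, but the product telescopes. Concretely, conjugating $-\D_{\hh}$ by powers of $e^\om = \ph^{-1}$ on $\Si$ and using $\na_h^2\ph = \mu_\Si h$ with the explicit $\mu_\Si$, each factor $\ph^{a}\circ(-\D_{\hh})\circ\ph^{b}$ expands into $-\D_h$ plus a first-order term (which cancels against the neighboring factor in the telescoping product) plus a zeroth-order term that, after using $\D_h\ph = -\tfrac{k\la}{2}\ph + kc$ and the relation between $\lh$ and $\la$ from Step~1, matches $\la c_j$ for the appropriate shifted index. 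I expect the shift $j \mapsto j$ across the product to be exactly the bookkeeping that produces $c_j=(\tfrac k2+j-1)(\tfrac k2-j)$ on both sides. The $Q_k$ statement, if needed, follows by the same analytic-continuation-in-$\ell$ argument used to define $Q_k$ from the $P_{2\ell}$, together with $P_k 1 = 0$ (which is visible from \eqref{Pfactor} since $c_{k/2}=0$).

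The main obstacle is Step~3: organizing the telescoping conjugation of the product so that all first-order terms cancel and the zeroth-order terms assemble into $\prod(-\D_h+\la c_j)$ with the correct constants, rather than just "an operator with the right leading symbol." This is where the precise affine form $\na_h^2\ph=(-\tfrac{\la}{2}\ph+c)h$ and the precise Einstein relation among $\la,\lh,c$ must be used in full, not just qualitatively; the indices $c_j$ are rigid and a sign or factor error would be immediately visible. A clean way to handle it is to note that the operators \eqref{Pfactor} are, by the ambient-metric reformulation used later in the paper, already known to be the GJMS operators of the warped product $h_+ = r^{-2}(dr^2+(1-\tfrac14\la r^2)^2 h)$, and \emph{that} metric transforms to $\hh_+$ (the canonical Poincaré metric built from $\hh$ and $\lh$) by the diffeomorphism implementing $\gh=e^{2\om}g$ on the Poincaré level; then \eqref{Ptransform} for all $\ell$ is inherited from the boundary conformal covariance of the GJMS construction on $(\mathring Y, h_+)$, which holds in all dimensions and orders regardless of the obstruction. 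I would present the direct computational proof as the primary argument and remark on the Poincaré-level argument as a conceptual check.
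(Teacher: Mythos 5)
Your proposed route is genuinely different from the paper's, which proves the theorem in three lines from Theorem~\ref{extensioninvariance} (the normal-form diffeomorphism relating $g_+$ and $\gh_+$ preserves the canonical extension $\Si\times[0,\ep_0)$ to infinite order), the diffeomorphism/conformal invariance in Proposition~\ref{gjmsprop}, and Theorem~\ref{factorization}; all of the real work is the proof of Theorem~\ref{extensioninvariance} in \S\ref{ambientsection}. Measured against either route, your proposal has genuine gaps. First, in Step~2 the justification is wrong: the intrinsic Hessian of $\fe|_\Si$ differs from the tangential part of ${}^g\na^2\fe$ by a term $L_{\al\be}{}^{\al'}\fe_{\al'}$, and minimality (trace-freeness of $L$) does not kill this full tensor. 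What is actually needed is $\fe_{\al'}=0$ on $\Si$, i.e.\ the vanishing of the normal derivative of $\om$, which follows from minimality with respect to \emph{both} metrics via $e^{2\om}\widehat H^{\al'}=H^{\al'}-\om^{\al'}$ --- exactly the first step of the paper's Lemma~\ref{2omega}. This is fixable, and with it one does get $\na^2_h(\fe|_\Si)=(-\la\,\fe|_\Si+c)h$ (note the coefficient is $-\la\fe$, not $-\tfrac{\la}{2}\fe$) together with $\lh=-\la\fe^2+2c\fe-|d\fe|^2_h$ on $\Si$.

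Second, and more seriously, Step~3 --- which you yourself identify as the main obstacle --- is not an argument but a hope. The individual factors $-\D_h+\la c_j$ are not conformally covariant, and a factor-by-factor intertwining $(-\D_{\hh}+\lh c_j)\circ\fe^{w}=\fe^{w+2}\circ(-\D_h+\la c_j)$ holds only for the one exponent $w=\tfrac k2-1$ (which does verify the case $\ell=1$); for the other factors the first-order terms do not vanish, and the claimed cancellation ``against the neighboring factor'' is precisely what would have to be proved, with no mechanism supplied. Note also that you cannot import the known conformally-Einstein factorization covariance here, since the induced metric $h$ on $\Si$ is in general not Einstein, so the identities available to you are weaker than in that setting. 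Finally, the fallback ``conceptual check'' begs the question: the assertion that the warped product $r^{-2}\bigl(dr^2+(1-\tfrac14\la r^2)^2h\bigr)$ is carried to $r^{-2}\bigl(dr^2+(1-\tfrac14\lh r^2)^2\hh\bigr)$ by ``the diffeomorphism implementing $\gh=e^{2\om}g$ on the Poincar\'e level'' is equivalent to that diffeomorphism preserving $Y=\Si\times[0,\ep_0)$ to infinite order, which is Theorem~\ref{extensioninvariance}; for $k$ even this is nontrivial precisely because the minimal extension is not unique at order $k+2$, and the paper devotes \S\ref{ambientsection} (the ambient characterization \eqref{tensorialcondition}, Propositions~\ref{Scomponents} and~\ref{canchar}, and the vanishing identities of Lemma~\ref{2omega}) to establishing it. As it stands, your proposal either leaves the central computation undone or silently assumes the paper's key theorem.
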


In this paper, asymptotically minimal submanifolds for Poincar\'e  
metrics are a tool used to derive and study the minimal submanifold
extrinsic GJMS operators $P_{2\ell}$.  The paper~\cite{FiH} is in the same
spirit.  In the case $k=1$, it uses the even asymptotically minimal
extension to study canonical parametrizations of the curve $\Si$ and to
characterize when it is a conformal geodesic.

Asymptotically hyperbolic metrics that are exact solutions of
$\Ric(g_+)=-ng_+$ are called {\it Poincar\'e--Einstein metrics}.   
Actual minimal submanifolds of Poincar\'e--Einstein spaces, not just
asymptotic ones, have been and continue to be an object of   
intense study themselves, motivated partially by physical considerations.   
In a follow-up paper \cite{CGKTW}, Theorem~\ref{factorization} together
with a construction based on scattering theory will be applied to derive a
formula of Gauss--Bonnet type for 
the renormalized area of such an even-dimensional minimal submanifold of a 
Poincar\'e--Einstein space, assuming a submanifold version of a result of
Alexakis~\cite{A} establishing a decomposition of integrands of conformally
invariant integrals.  This application was the genesis of our project:  we 
were led to search for extrinsic GJMS operators satisfying a factorization
of the form~\eqref{Pfactor} in order to derive such a formula.   
The formula takes the form
\[
\cA = a_k \chi(Y) + \int_Y \cW_k \, dv_{h_+},  
\]
where $k$ is even, $Y$ is a minimal submanifold of dimension $k$ of a
Poincar\'e--Einstein space, $h_+$ is the induced metric on $Y$,
$\chi(Y)$ denotes the Euler characteristic and 
$\cA$ the renormalized area of $Y$, $\cW_k$ is a pointwise
conformal submanifold invariant, and $a_k\in \R$.  A formula of this type
was derived for 
$k=2$ in \cite{AM} and a formula in the same spirit for $k=4$ in the case
of hypersurfaces in~\cite{T}.  The derivation in \cite{CGKTW} follows the
same outline as 
the proof in~\cite{CQY} of an analogous formula for the renormalized
volume of even-dimensional Poincar\'e--Einstein manifolds.        

The paper \cite{GZ} showed that the usual GJMS operators can be embedded in
a continuous family of 
fractional order scattering operators.  In general, such a family
depends on a choice of an exact or asymptotic Poincar\'e--Einstein manifold
with prescribed conformal infinity.  Such fractional order operators have
been of great interest in recent years, motivated in part by the connection
to the Caffarelli--Silvestre extension \cite{CS,CG,CC,CY}. 
As indicated in \cite{GZ}, the scattering construction
can be carried out for any asymptotically hyperbolic metric.  Consequently, 
our minimal submanifold extrinsic GJMS operators can also be embedded in
families of fractional order minimal submanifold extrinsic scattering
operators upon  choosing an extension of $\Si$ as an exact or
asymptotically minimal submanifold of an exact or asymptotically
Poincar\'e--Einstein space.  It would be interesting to explore possible
uses of such operators.

A summary of the paper is as follows.  In \S\ref{notation} we describe our 
notation and conventions and formulate the notion of naturality in the
submanifold setting.  In \S\ref{background}, we review the formal  
asymptotics of smooth, even Poincar\'e metrics, minimal submanifolds, and 
their induced metrics, mostly following~\cite{FG2} and~\cite{GR}.  In
\S\ref{gjmssection} we review the GJMS construction in the setting of a
general asymptotically hyperbolic metric and prove Theorem~\ref{gjmsmain}
except for case (1c) 
with $\ell=k/2 + 1$.  We discuss minimal submanifolds of Einstein
manifolds and prove Theorem~\ref{factorization}.  We then formulate  
Theorem~\ref{extensioninvariance}, which asserts the
infinite order diffeomorphism invariance of the canonical minimal
extension, and prove Theorem~\ref{Einsteininvariance} assuming
Theorem~\ref{extensioninvariance}.  We close \S\ref{gjmssection} by
showing that if $(M,[g])$ is locally conformally flat and $\Si$ is umbilic,
then the extrinsic operators equal the intrinsic operators.  
In \S\ref{formulas}, we derive two versions of formulas for $P_2$, $Q_2$
and $P_4$, $Q_4$ for general $g$, $\Si$, $k$ and $n$.  
One version is Theorem~\ref{minimalops}, in which the operators are
expressed in terms of the second fundamental form and curvature of the
background metric $g$.  This version is well-suited to 
seeing the factorization formula~\eqref{Pfactor} when $g$ is Einstein and
$\Si$ is minimal.  The
second version is Theorem~\ref{intrinsicformulas}, in which $P_2$ and $P_4$  
are written as the GJMS operators $\Pb_2$ and $\Pb_4$ intrinsic to $\Si$
plus additional terms involving extrinsic quantities.  The derivation of
both is based on a fourth-order passage   
to normal form of the induced metric $h_+$ on $Y$.  We conclude 
\S\ref{formulas} by proving Theorem~\ref{gjmsmain} in the remaining case
(1c) for $\ell = k/2+1$.  
The main step is Lemma~\ref{Udependence}, which uses a calculation of a
higher order passage 
to normal form to identify explicitly the contribution to the induced
metric of the ambiguity in the minimal submanifold expansion.
Finally, in \S\ref{ambientsection} we show how to reformulate the whole
construction in terms of the ambient metric.  We use this to prove  
Theorem~\ref{extensioninvariance}, thereby completing the
proof of Theorem~\ref{Einsteininvariance}.   The proof of 
Theorem~\ref{extensioninvariance} is modeled on the proof of the analogous
result in~\cite{FG2} asserting the diffeomorphism invariance of the
canonical Poincar\'e metric associated to an Einstein metric.   

\bigskip
\noindent
{\it Acknowledgements.}  This project was initiated at the August 2022  
workshop ``Partial differential equations and conformal geometry'' at the
American Institute of Mathematics.  The authors are
grateful to AIM for its support and for providing a structure and
environment conducive to fruitful research.  The authors 
would also like to thank Aaron Tyrrell and Andrew Waldron for their
numerous contributions to this paper.  Jeffrey S. Case was partially
supported by the Simons Foundation (Grant \#524601).

\section{Notation and Conventions}\label{notation}
For a Riemannian manifold $(M^n,g)$, we denote the Levi-Civita connection 
by ${}^g\nabla$, the curvature tensor by 
$R_{ijkl}$, the Ricci tensor by $\Ric(g)$ or $R_{ij}=R^k{}_{ikj}$, and the
scalar curvature by $R=R^i{}_i$.  Our sign convention for $R_{ijkl}$ is
such that spheres have  
positive scalar curvature.  The Schouten tensor of $(M,g)$ is 
\[
\sP_{ij}=\frac{1}{n-2}\Big(R_{ij}-\frac{R}{2(n-1)}g_{ij}\Big) 
\]
and the Weyl tensor is defined by the decomposition
\[
R_{ijkl}=W_{ijkl}+\sP_{ik}g_{jl}-\sP_{jk}g_{il}-\sP_{il}g_{jk}+\sP_{jl}g_{ik}.
\]
The Cotton and Bach tensors are
\[
C_{ijk}={}^g\nabla_k\sP_{ij}-{}^g\nabla_j\sP_{ik}
\]
and 
\[
B_{ij}={}^g\nabla^kC_{ijk}-\sP^{kl}W_{kijl}. 
\]
Latin indices $i$, $j$, $k$ run between $1$ and $n$ in local
coordinates, or can be interpreted as labels for $TM$ or its  
dual in invariant expressions such as those above (Penrose abstract index
notation).    

We will denote by $\Si$ a submanifold of $(M,g)$ of dimension $k$,
$1\leq k\leq n-1$.  All 
considerations in this paper are local, so all submanifolds are assumed 
to be embedded.  
We use $\al$, $\be$, $\ga$ as index labels for
$T\Si$ and $\al'$, $\be'$, $\ga'$ for the normal bundle $N\Si$.
A Latin index $i$ thus specializes either to an $\al$ or an $\al'$. 
So, for instance, when restricted to $\Si$, the Schouten
tensor $\sP_{ij}$ splits   
into its tangential $\sP_{\al\be}$, mixed $\sP_{\al\al'}$, and normal
$\sP_{\al'\be'}$ pieces.  Likewise, the restriction of the metric $g_{ij}$ to
$\Si$ can be identified with the metric $g_{\al\be}$ induced on $\Si$
together with the bundle  metric $g_{\al'\be'}$ induced on $N\Si$.  We 
use $g_{\al\be}$ and $g_{\al'\be'}$ and their inverses to lower and raise  
unprimed and primed indices.

The {\it second fundamental form}  $L:S^2T\Si\rightarrow N\Si$ is 
defined by $L(X,Y)=({}^g\na_X Y)^\perp$.  We typically write it as
$L_{\al\be}^{\al'}$ , or perhaps as $L_{\al\be\al'}$ or
$L_\al{}^\be{}_{\al'}$ upon lowering and/or raising indices.  Since $L$ has 
only one primed index and 
is symmetric in $\al\be$, it is not necessary to pay 
attention to the order of the three indices.  The mean curvature vector is 
$H=\tfrac{1}{k}\tr L$, i.e. the section of $N\Si$ given by    
$H^{\al'}=\tfrac{1}{k}g^{\al\be}L_{\al\be}^{\al'}=\tfrac{1}{k}L_{\al}{}^{\al\al'}$.    

The Levi-Civita connection of $g$ induces connections on $T\Si$ and  
$N\Si$ together with their duals and tensor products, all of which we
denote $\nabla$.  So, for instance, we can form the covariant
derivative $\na_\al H^{\al'}$, which is a section of $T^*\Si\otimes N\Si$.

When working in coordinates, we always use a local coordinate system 
\[
z^i=(x^\al,u^{\al'}),\quad 1\leq \al\leq k, \quad k+1\leq \al'\leq n
\]for $M$ near 
$\Si$, with the properties that  
$\Si=\{u^{\al'}=0\}$ and $\pa_\al\perp \pa_{\al'}$ on $\Si$.  We call such
a coordinate system {\it adapted}.  The coordinates $x^{\al}$ restrict to
a coordinate system on $\Si$.  On $\Si$, the vectors   
$\pa_\al$ span $T\Si$, the $\pa_{\al'}$ span $N\Si$, and the mixed 
metric components $g_{\al\al'}$ vanish.  This use of indices for
coordinates is consistent with the abstract interpretation described
above.  Partial derivatives in local
coordinates are expressed using either of the two notations
$\pa_\al u_\be = u_{\be,\al}$.  In \S\ref{ambientsection}, 
indices preceded by a semicolon, such as $u_{\be;\al}$, are used to denote 
covariant differentiation.

By a (scalar, linear) {\it natural differential operator} on
$k$-dimensional submanifolds of $n$-dimensional Riemannian manifolds, we
will mean an assignment to each $\Si^k\subset (M^n,g)$ of a differential
operator $P$ on $\Si$, such that the following two conditions hold:
\begin{enumerate}
\item
  If $\Si'\subset (M',g')$ and $\varphi: (M,g) \rightarrow (M',g')$ is an  
  isometry for which $\varphi(\Si)=\Si'$, then $\varphi^*P' = P$.
\item
  There are $m\in \N\cup \{0\}$ and universal polynomials $q_{\cI}$ such 
  that in any adapted local coordinate system $z=(x,u)$, $P$ has the form   
  \begin{equation}\label{natural}
  Pf(x) = \sum_{|\cI|\leq m} 
  q_{\cI}\left(g^{\al\be},g^{\al'\be'},\pa_z^Jg_{ij}\right)\pa_x^{\cI}f(x),\qquad 
  f\in C^\infty(\Si).  
  \end{equation}
  Here $J$ is an $n$-multiindex and $\cI$ is a $k$-multiindex.  The
  argument $\pa_z^Jg_{ij}$ denotes all derivatives of all $g_{ij}$ of
  orders up to $N$, for some $N$, except that the variables
  $\pa_x^\cJ g_{\al\al'}$ for $k$-multiindices $\cJ$ do not appear (since
  these vanish in adapted coordinates).  The $g^{\al\be}$, $g^{\al'\be'}$
  and $\pa_z^Jg_{ij}$ are evaluated at $z=(x,0)$.      
\end{enumerate}
To clarify, $q_{\cI}$ is a polynomial function on the vector space in which
the inverse metric and the metric and its derivatives  
take values in local coordinates, taking into account the symmetry in the
metric and partial derivative indices.  For instance, for $N=1$, the  
arguments are   
\[
(g^{\al\be},g^{\al'\be'},g_{\al\be},g_{\al'\be'},g_{\al\be,i},g_{\al\al',\ga'},g_{\al'\be',i}), 
\]
so each $q_{\cI}$ is a universal polynomial function  on the vector space    
\[
S^2\R^k\oplus S^2\R^{n-k}\oplus S^2{\R^k}^*\oplus S^2{\R^{n-k}}^*
\oplus (S^2{\R^k}^*\otimes {\R^n}^*)\oplus ({\R^k}^*\otimes 
(\otimes^2{\R^{n-k}}^*)) \oplus (S^2{\R^{n-k}}^*\otimes {\R^n}^*).
\]
The special case $m=0$ serves to define natural scalars of $k$-dimensional   
submanifolds of $n$-dimensional Riemannian manifolds.  In a follow-up paper
\cite{GK}, it will be shown that any natural differential operator as above
can be expressed as a linear combination of contractions of covariant
derivatives of the curvature tensor of $g$, covariant derivatives of the
second fundamental form, and covariant derivatives of $f$.  

Our sign convention for Laplacians is that $\Delta=\sum \pa_i^2$ on
Euclidean space.  Norms are always taken with respect to the metric on tensor products
induced by the metric on the underlying bundle.

\section{Background:  Smooth Even Formal Asymptotics}\label{background} 

In this section we review the formal asymptotics of Poincar\'e metrics and
minimal submanifolds thereof.  We restrict consideration here to smooth 
even expansions, which we use in \S\ref{gjmssection} to derive  
extrinsic GJMS operators.  We largely follow
\cite{FG2} for Poincar\'e metrics and~\cite{GR} for minimal submanifold
asymptotics.  The asymptotics of minimal submanifolds of
Poincar\'e--Einstein spaces have also been studied in \cite{GrW,Z,M-K}.  

Let $(M^n,[g])$ be a conformal manifold, $n\geq 2$, and $g$ a
chosen metric in the conformal class.  Set
$X=M\times [0,\ep_0)_r$, $\mathring{X}=M\times (0,\ep_0)_r$, and 
identify $M$ with $M\times \{0\}\subset X$.  
By an {\it even Poincar\'e metric}
in normal form relative to $g$, we will mean a metric $g_+$ on  
$\mathring{X}$, for some~$\ep_0>0$, of the form 
\begin{equation}\label{gnormalform}
g_+ = \frac{dr^2 + g_r}{r^2},
\end{equation}
where $g_r$ is a smooth 1-parameter family of metrics on $M$ for which
$g_0=g$, such that the Taylor expansion of $g_r$ at $r=0$ is even, and
satisfying the following:  
\begin{enumerate}
\item If $n$ is odd, then $\Ric(g_+) + ng_+$ vanishes to infinite order at
  $r=0$.
\item If $n$ is even, then $|\Ric(g_+) + ng_+|_{g_+} = O(r^n)$.  
\end{enumerate}
(For $n$ even, the definition in \cite{FG2} includes an additional trace 
condition that will not be relevant here.)  
An even Poincar\'e metric in normal form relative to $g$ exists and 
$g_r$ is unique, to infinite order if $n$ is odd, and modulo $O(r^n)$ if
$n$ is even.
Even Poincar\'e metrics in normal form relative to 
conformally related metrics are related, to infinite order if $n$ is odd,
and modulo $ O(r^n)$ if $n$ is even, by an even diffeomorphism between 
neighborhoods of $M$ in $X$ which restricts to the identity on $M$. 
See~\cite{FG2}.   
Set $\gb=r^2g_+=dr^2+g_r$.  We view $M=\pa X$ as the  
boundary at infinity relative to $g_+$.

Let $\Si\subset M$ be a smooth embedded submanifold of dimension $k$, 
$1\leq k\leq n-1$.  Let $Y^{k+1}\subset X$  
be a smooth submanifold which is transverse to $M$ and satisfies
$Y \cap M = \Si$. 
We describe $Y$ near $\Si$ in terms of a 1-parameter family of sections 
of the $g$-normal bundle $N\Si$ of $\Si$ in $M$
as follows:  The normal exponential map of $\Si$ with respect to $g$,
denoted $\exp_\Si$, defines a  
diffeomorphism from a neighborhood of the zero  
section in $N \Sigma$ to a neighborhood of $\Sigma$ in $M$.
For $r\geq 0$ small, let $Y_r\subset M$ denote the slice of $Y$ at height
$r$,  defined by $Y\cap (M\times \{r\})=Y_r\times \{r\}$.  Then $Y_r$ is a
smooth submanifold of $M$ of dimension $k$ and $Y_0=\Sigma$.  For each $r$,
there is a unique section $U_r\in\Gamma(N\Sigma)$ so that   
$\exp_\Si \{U_r(p):p\in\Si\} = Y_r$.   This defines a smooth 1-parameter
family $U_r$ of sections of $N \Sigma$ for which, near $\Si$, we have  
\begin{equation}\label{Y}
Y=\left\{\big(\exp_\Si U_r(p),r\big): p\in \Sigma, r\geq 0\right\}.
\end{equation}
In particular, $U_0=0$.  The submanifolds $Y\subset X$ that we consider 
will all be 
orthogonal to~$M$ along $\Sigma$ with respect to $\gb$.  Thus the tangent 
bundle to $Y$ along $\Sigma$ is 
$T\Sigma\oplus \Span{\pa_r}$, and the normal bundle to $Y$ along $\Si$ can
be identified with $N\Si$.  Orthogonality of $Y$ to $M$ along $\Si$ is
equivalent to the condition $\pa_r U_r|_{r=0}=0$, i.e. $U_r = O(r^2)$.

The inverse normal exponential map determines a boundary identification
diffeomorphism~$\psi$ 
from a neighborhood of $\Si$ in $Y$ to a neighborhood of $\Si$ in
$\Si\times [0,\ep_0)$ by
\[
\psi(q,r)= (\pi((\exp_\Si)^{-1}q),r),
\]
where $(q,r)\in Y\subset M\times [0,\ep_0)$ and $\pi:N\Si \rightarrow \Si$
is the projection onto the base.  It is easily seen that $\psi$ is indeed a 
diffeomorphism if $Y$ is transverse to $M$.

It is useful to realize $\psi$ explicitly in terms of geodesic
normal coordinates.  Choose a local coordinate system
$\{x^\al: 1\leq \al\leq k\}$ for an open subset $\cV\subset \Sigma$ and a
local frame 
$\{e_{\al'}(x): 1\leq \al'\leq  n-k\}$ for $N\Sigma|_{\cV}$.  Let 
$\{u^{\al'}: 1\leq \al'\leq n-k\}$ denote the    
corresponding linear coordinates on the fibers of $N \Sigma|_{\cV}$.  The
map $\exp_\Si \big(u^{\al'} e_{\al'}(x)\big)\mapsto (x,u)$ defines a  
geodesic normal coordinate system $(x^\al,u^{\al'})$ in a neighborhood
$\cW$ of $\cV$ in $M$, with respect to which $\Sigma$ is given by
$u^{\al'}=0$.  For each $(x,u)$, 
the curve $t\mapsto (x,tu)$ is a geodesic for $g$ normal to $\Sigma$.  In 
particular, in these coordinates the mixed metric components $g_{\al\al'}$
vanish on $\cV$, so that $(x,u)$ is an adapted coordinate system as defined
in \S2.  Extend the coordinates $(x,u)$ to 
$\cW\times [0,\ep_0)\subset X$   
to be constant in $r$.  In these coordinates, the diffeomorphism $\psi$ is
given by $\psi(x,u,r) = (x,r)$ for $(x,u,r)\in Y$.  The coordinates $(x,r)$
restrict to a coordinate system on $Y$.  
If $U_r$ is a 1-parameter family of sections of
$N \Sigma$ and we define $u^{\al'}(x,r)$ by
$U_r(x) = u^{\al'}(x,r)e_{\al'}(x)$, then the description~\eqref{Y} of $Y$
is the same as saying that, in the coordinates $(x,u,r)$ on $X$, $Y$ is the 
graph $u^{\al'}=u^{\al'}(x,r)$.  The notation 
$u^{\al'}(x,r)$ can therefore be interpreted as the components of $U_r$ in
the frame $e_{\al'}(x)=\pa_{\al'}$, or equivalently as the graphing
function $u^{\al'}=u^{\al'}(x,r)$ for $Y$ in these coordinates.  When we
write $U_r^{\al'}$, the index is interpreted as an abstract index
indicating that $U_r$ is a section of $N\Si$.    

We now impose the condition that $Y$ is asymptotically minimal with respect
to the metric~$g_+$.  This
becomes a system of partial differential equations on the normal vector
fields $U_r$.  Recall that minimality of $Y$ is equivalent to the statement
that the mean curvature vector field of
$Y$ with respect to~$g_+$ obeys $H_Y=0\, .$ 

\begin{proposition}\label{U}  Let $g_+$ be an even Poincar\'e metric in
normal form and $\Si$ a submanifold of $M$ as above.  
\begin{enumerate}
\item
If $k$ is odd, then there exists $U_r$ whose Taylor
expansion in $r$ at $r=0$ is even and for which $H_Y$ vanishes to
infinite order.  Such $U_r$ is unique to infinite order.  If $n$ is even,
the Taylor expansion of $U_r$ modulo $O(r^{n+2})$ is  
independent of the $O(r^n)$ ambiguity in~$g_r$.   
\item
If $k$ is even, then there exists $U_r$ so that
$|H_Y|_{\gb}=O(r^{k+2})$.  The Taylor expansion of $U_r$ modulo
$O(r^{k+2})$ is uniquely determined (and is independent of the $O(r^n)$
ambiguity in $g_r$ if $n$ is even) and is even modulo $ O(r^{k+2})$.    
\end{enumerate}
\end{proposition}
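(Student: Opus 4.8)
The plan is to solve the asymptotically-minimal equation $H_Y=0$ order by order in $r$, treating it as a differential-algebraic recursion for the Taylor coefficients of $U_r$. First I would write the mean curvature vector $H_Y$ of the graph $u^{\al'}=u^{\al'}(x,r)$ with respect to $\gb=dr^2+g_r$ (equivalently with respect to $g_+$, since $H$ for $g_+=r^{-2}\gb$ and $H$ for $\gb$ differ by a conformal-factor correction that is lower order and does not affect the structure of the recursion — one must of course track this correction carefully, since it is exactly what shifts the indicial roots). Using the adapted coordinates $(x,u,r)$ on $X$ in which $g_{\al\al'}$ vanishes on $\Si$ and the radial curves are $g$-geodesics, the induced metric on $Y$ has the schematic form $h_+ = r^{-2}\big(dr^2 + g_{\al\be}(x,u(x,r),r)\,dx^\al dx^\be + (\text{terms with } \pa_r u, \pa_\al u)\big)$, and the minimality equation becomes a second-order quasilinear system
\begin{equation}\label{Ueqn}
\pa_r^2 U_r + \tfrac{1-k}{r}\,\pa_r U_r + (\text{lower-order in derivatives of } U) = F_r,
\end{equation}
where the forcing term $F_r$ is built from the second fundamental form of $\Si$, the curvature of $g$, and the Taylor coefficients of $g_r$, and is smooth and even in $r$ (evenness of $g_r$ and of the geometry transverse to $\Si$ feeds through). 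The indicial operator on a coefficient of $r^m$ in $U_r$ is $m(m-1)+(1-k)m = m(m-k)$, so the indicial roots are $0$ and $k$.

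Next I would run the recursion. Since $U_0=0$ and orthogonality forces $U_r=O(r^2)$, we start at order $r^2$. At each even order $r^m$ with $2\le m$, the coefficient $U^{(m)}$ of $r^m$ is determined by the lower-order coefficients via \eqref{Ueqn} \emph{provided the indicial factor $m(m-k)$ is nonzero}. The odd-order coefficients all vanish: the forcing $F_r$ is even, the equation is even, and an induction shows that if all lower odd coefficients vanish then the odd part of the recursion is homogeneous with vanishing indicial obstruction (the relevant root $k$ has the wrong parity when $k$ is even, and when $k$ is odd one checks the odd coefficients are forced to be zero at each stage), so $U_r$ stays even. This is the routine but bookkeeping-heavy part. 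When $k$ is odd, $m(m-k)\ne 0$ for every even $m\ge 2$, so the recursion never stalls and we obtain $U_r$ to infinite order, uniquely; this proves (1). When $k$ is even, the recursion proceeds freely through orders $r^2,\dots,r^{k}$ — wait, the obstruction is at $m=k$, so in fact the coefficients $U^{(2)},\dots,U^{(k-2)}$ are uniquely determined, at $m=k$ the indicial factor vanishes and $U^{(k)}$ is the free ambiguity constrained only by a solvability condition (which one checks is automatically satisfied, or rather produces the obstruction that is the subject of later sections), and then one can still go one step further to kill $H_Y$ through order $r^{k}$ in $\gb$-norm, i.e.\ $|H_Y|_{\gb}=O(r^{k+2})$, with $U_r$ determined modulo $O(r^{k+2})$; this proves (2).

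For the independence-of-ambiguities claims, the point is that when $n$ is even the $O(r^n)$ ambiguity in $g_r$ first enters $F_r$ at order $r^{n}$, hence can only affect $U^{(m)}$ for $m\ge n$ — actually for $m\ge n$ with a possible shift, so modulo $O(r^{n+2})$ (case $k$ odd) or modulo $O(r^{k+2})$ with $k+2\le n$ when the restriction $n>k$ holds (case $k$ even, where $k<n$ always), the expansion of $U_r$ sees only the unambiguous part of $g_r$. I would make this precise by tracking the lowest order in $r$ at which an $O(r^n)$ perturbation of $g_r$ can propagate into the right-hand side of \eqref{Ueqn}. The main obstacle, and the step requiring genuine care rather than bookkeeping, is the derivation of \eqref{Ueqn} in a form clean enough to read off the indicial operator and the parity of the forcing — in particular getting the conformal-factor correction from passing between $g_+$ and $\gb$ right, since an error there moves the critical order $m=k$ and would change the statement. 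I expect this computation to mirror closely the analogous one for Poincar\'e metrics in \cite{FG2}, with the submanifold second fundamental form playing the role that the Schouten tensor plays there, and with the clean structural input being that $\Si$ sits via $g$-geodesics so that the graphing function genuinely measures the deviation of $Y$ from the cylinder $\Si\times[0,\ep_0)$.
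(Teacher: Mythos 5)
Your overall strategy---write the minimality condition as a second-order recursion for the Taylor coefficients of $U_r$ and read off the indicial roots---is the same in spirit as the paper's argument (which defers to Theorem 3.1 of \cite{GR} for $k$ even and notes the same analysis works for $k$ odd). But the crucial computation, the one you yourself flag as "the step requiring genuine care," is wrong: the indicial roots of the minimal submanifold equation for the graphing function are $0$ and $k+2$, not $0$ and $k$. Your coefficient $\tfrac{1-k}{r}\,\pa_r U$ is what you would get from the scalar Laplacian of the induced hyperbolic metric on the $(k+1)$-dimensional slice, but the graph components $u^{\al'}$ are not correctly normalized scalars: in the model $g_+=r^{-2}(dr^2+|dx|^2+|du|^2)$ the normal vector $\pa_{\al'}$ has length $r^{-1}$, and the zeroth-order (Jacobi) term shifts the roots by $2$. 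Concretely, the weighted area element is $r^{-(k+1)}\sqrt{1+|u_r|^2+|\na_x u|^2+\cdots}$, so the linearized equation is $u_{rr}-\tfrac{k+1}{r}u_r+\D_x u=0$, with indicial polynomial $m(m-k-2)$.

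This error changes the whole case structure and leaves your argument proving a statement different from (and weaker than) the proposition. With the correct roots, for $k$ even the coefficients $U_{(2)},\dots,U_{(k)}$ are all determined and the obstruction/ambiguity first appears at order $r^{k+2}$, which is exactly why the expansion is unique modulo $O(r^{k+2})$ and $|H_Y|_{\gb}=O(r^{k+2})$; with your roots you would only get uniqueness modulo $O(r^{k})$, and your patch ("one can still go one step further") is unjustified once the indicial factor has vanished at $m=k$. For $k$ odd your treatment of parity is also off: the coefficient at the odd order $r^{k+2}$ is \emph{not} forced to vanish by the recursion---it is the genuine freedom at the nonzero indicial root---and it is precisely the requirement that the expansion of $U_r$ be even that fixes it (this is why evenness appears as a hypothesis/normalization in part (1), with uniqueness holding only within the class of even expansions). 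The independence-of-ambiguity claims would then follow by the propagation argument you sketch, but only after the indicial analysis is corrected, since the order at which the $O(r^n)$ ambiguity in $g_r$ can first influence $U_r$ is tied to the same bookkeeping.
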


\noindent
Proposition~\ref{U} is proved in~\cite[Theorem 3.1]{GR} for $k$ even.  It 
is straightforward to verify that the same sort of analysis can be used to
prove Proposition~\ref{U} for $k$ odd.  The main point is that the minimal
submanifold equation respects parity and has indicial roots of $0$ and  
$k+2$.  The freedom at the indicial root of $0$ corresponds to the freedom
to prescribe $\Si$ arbitrarily.  When $k$ is odd, the freedom at the
indicial root of $k+2$ is fixed by requiring the expansion of $U_r$ to be
even.  When $k$ is even, the indicial root of $k+2$ generates an
obstruction to existence of a smooth solution.  Note that $U_r=O(r^2)$; a 
minimal submanifold is orthogonal to $M$ along $\Si$. 

If $\varphi$ is an even diffeomorphism that restricts to the identity on
$M$ and pulls back~$g_+$ to another even
Poincar\'e metric $\widetilde{g}_+$ in normal form relative to a 
conformally related metric, then $\varphi$ pulls back the minimal extension
$Y$ for $g_+$ to that for $\widetilde{g}_+$, to infinite order if $k$ is
odd, and modulo $ O(r^{k+2})$ if $k$ is even.  This follows from the 
isometry invariance of the minimality condition, the parity
preservation of $\varphi$, and the uniqueness of $Y$.  In this sense~$Y$ is  
conformally invariant, to infinite order if $k$ and $n$ are odd, to order
$O(r^{n+2})$ if $k$ is odd and $n$ is even, and to order $O(r^{k+2})$ if
$k$ is even.  

In case (2), the condition $|H_Y|_{\gb}=O(r^{k+2})$ only determines the
expansion of $U_r$ modulo $O(r^{k+2})$.  Here and in 
\S\ref{gjmssection} we will take the expansion to be even to infinite
order, so that the full Taylor expansion of $U_r$ is even in all cases. 
We write the expansion of $U_r$ in the form
\begin{equation}\label{Uexpand}
U_r = U_{(2)}r^2+U_{(4)}r^4+\hhh\cdots\, ,
\end{equation}
where the $U_{(2j)}$ are globally and invariantly
defined sections of $N\Sigma$ determined by the choice of metric $g$ in the
conformal class, up to the order specified by Proposition~\ref{U}.  The
first coefficient is given by $U_{(2)}=\frac12 H$, where $H$ is the
mean curvature vector of $\Si\subset M$ with respect to $g$; see (5.1) of 
\cite{GR}.    

Let $h_+$ denote the metric on $Y$ induced by $g_+$.  Since $g_+$ and $Y$
are invariant up to diffeomorphism to the orders stated above under
conformal change of $g$, it follows that $h_+$ is likewise invariant up to
diffeomorphism.  Set $\hb = r^2h_+$, 
so that $\hb$ is the metric induced by $\gb=dr^2 + g_r$. 
In terms of the coordinates $(x^\al,r)$ on $Y$ introduced above, $\hb$ is
given by
\begin{equation}\label{hbar}
\begin{aligned}
\hb_{\al\be}&=g_{\al\be}+2g_{\al'(\al} u^{\al'}{}_{,\be)}
+g_{\al'\be'}u^{\al'}{}_{,\al}u^{\be'}{}_{,\be} \, ,\\
\hb_{\al 0} &=g_{\al\al'}u^{\al'}{}_{,r}
+g_{\al'\be'}u^{\al'}{}_{,\al}u^{\be'}{}_{,r}\, ,\\ 
\hb_{00}&= 1+g_{\al'\be'}u^{\al'}{}_{,r}u^{\be'}{}_{,r}.
\end{aligned}
\end{equation}
We use a ``$0$''  index for the $r$-direction.  
The components of $\hb$ and the derivatives of $u$ are evaluated at
$(x,r)$.  The above formulas for components of $\hb$ were obtained 
from the pullback of $\gb$ upon writing
\[
g_r=g_{\al\be}(x,u,r)dx^{\al}dx^\be
+2g_{\al\al'}(x,u,r)dx^\al
du^{\al'}+g_{\al'\be'}(x,u,r)du^{\al'}du^{\be'}.
\]
In~\eqref{hbar}, all $g_{ij}$ are understood to be evaluated at
$(x,u(x,r),r)$.  Since the expansions of~$g_r$ and $u(x,r)$ are even in
$r$, it follows upon inspection of~\eqref{hbar} that the Taylor expansions
of $\hb_{\al\be}$ and 
$\hb_{00}$ in $r$ at $r=0$ are even and the Taylor expansion of 
$\hb_{\al 0}$ is odd.  
The following proposition is easily verified from~\eqref{hbar},
Proposition~\ref{U}, and the formal determination of the Poincar\'e metric  
\eqref{gnormalform}.  
\begin{proposition}\label{orders}\hfill
\begin{enumerate}
\item
  If $n$ and $k$ are both odd, then the infinite order Taylor expansions of
  $\hb_{\al\be}$, $\hb_{\al 0}$, and $\hb_{00}$ are uniquely determined by
  $\Si$ and $g$.   
\item
  If $n$ is even and $k$ is odd, then the Taylor expansions of
  $\hb_{\al\be}$ and $\hb_{00} \mod O(r^n)$ and of
  $\hb_{\al 0} \mod O(r^{n+1})$ are independent of the $O(r^n)$ ambiguity 
  in $g_r$, and therefore are uniquely determined by
  $\Si$ and $g$.   
\item
  If $k$ is even, then the Taylor expansions of
  $\hb_{\al\be}$ and $\hb_{00} \mod O(r^{k+2})$ and of
  $\hb_{\al 0} \mod O(r^{k+3})$ are
  independent of the $O(r^{k+2})$ ambiguity in $U_r$ 
  (and independent of the $O(r^n)$ ambiguity in $g_r$ if $n$ is even), and
  therefore are uniquely determined by   $\Si$ and $g$.    
\end{enumerate}
\end{proposition}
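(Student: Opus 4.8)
The plan is to substitute the known structure of the ambiguities directly into the formulas~\eqref{hbar} and track orders in $r$. There are three inputs. First, from the formal solvability of the Poincar\'e metric equation recalled in~\S\ref{background}, the Taylor expansion at $r=0$ of the family $g_r$ --- equivalently, of the functions $g_{ij}(x,u,r)$ --- is determined to infinite order when $n$ is odd and modulo $O(r^n)$ when $n$ is even. Second, by Proposition~\ref{U}, the expansion of $u^{\al'}(x,r)$ is determined to infinite order when $n,k$ are both odd, modulo $O(r^{n+2})$ when $k$ is odd and $n$ is even, and modulo $O(r^{k+2})$ when $k$ is even. Third, $U_r=O(r^2)$, so $u^{\al'}=O(r^2)$, $u^{\al'}{}_{,\al}=O(r^2)$, and $u^{\al'}{}_{,r}=O(r)$. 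I would also record two elementary observations used repeatedly: since the coordinates are adapted we have $g_{\al\al'}(x,0,0)=0$, and since the expansions of $g_r$ and of $u$ are even, $g_{\al\al'}(x,u(x,r),r)$ is even in $r$ and vanishes at $r=0$, hence is $O(r^2)$; and, because $k=n-1$ forces $k$ and $n$ to have opposite parities, if $k$ and $n$ are both even then in fact $n\ge k+2$, so $O(r^n)\subseteq O(r^{k+2})$.

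Part (1) requires essentially no work: when $n$ and $k$ are both odd, every ingredient on the right-hand side of~\eqref{hbar} --- the functions $g_{ij}$ evaluated at $(x,u(x,r),r)$, and the derivatives of $u$ --- has a Taylor expansion in $r$ determined to infinite order, and therefore so do $\hb_{\al\be}$, $\hb_{\al0}$, and $\hb_{00}$.

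For parts (2) and (3), let $\de$ denote the difference between the quantities built from two admissible choices of $(g_r,U_r)$. Applying the chain rule to $g_{ij}(x,u(x,r),r)$ and using the inputs above, $\de\big[g_{ij}(x,u(x,r),r)\big]$ is the sum of $(\de g_{ij})(x,u(x,r),r)$, which is $O(r^n)$ (or $0$ when $n$ is odd), and a term which is $O(\de u)$. Substituting into~\eqref{hbar} and using that each factor $u^{\al'}{}_{,\al}$ or $g_{\al\al'}$ contributes $O(r^2)$ and each factor $u^{\al'}{}_{,r}$ contributes $O(r)$, one reads off the order of the ambiguity component by component. In part (2), where $\de g_{ij}=O(r^n)$ and $\de u=O(r^{n+2})$, the dominant ambiguity in $\hb_{\al\be}$ comes from the raw term $g_{\al\be}$ and is $O(r^n)$, that in $\hb_{00}$ is $O(r^{n+2})$, and that in $\hb_{\al0}$ is $O(r^{n+1})$ (arising from $\de g_{\al\al'}$ times $u^{\al'}{}_{,r}$); all of these lie within the stated tolerances. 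In part (3), where $\de u=O(r^{k+2})$ and, if $n$ is even, $\de g_{ij}=O(r^n)$ with $O(r^n)\subseteq O(r^{k+2})$ by the parity remark, the same bookkeeping gives: dominant ambiguity $O(r^{k+2})$ in $\hb_{\al\be}$ (from the raw term $g_{\al\be}$ through $\de u$), $O(r^{k+2})$ in $\hb_{00}$ (from $g_{\al'\be'}u^{\al'}{}_{,r}u^{\be'}{}_{,r}$), and $O(r^{k+3})$ in $\hb_{\al0}$, the extra power of $r$ coming from the fact that every term in $\hb_{\al0}$ carries a factor $u^{\al'}{}_{,r}=O(r)$ and the one raw metric factor $g_{\al\al'}$ appearing there is itself $O(r^2)$. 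Finally, since $(g_r,U_r)$ is itself uniquely determined by $\Si$ and $g$ to the relevant orders, these independence statements imply at once the asserted uniqueness.

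I expect no conceptual difficulty: the work is entirely the order bookkeeping in~\eqref{hbar}. The two points to get right are the vanishing orders of the relevant factors --- in particular $u^{\al'}{}_{,r}=O(r)$ and $g_{\al\al'}(x,u(x,r),r)=O(r^2)$, which together are exactly what makes the $\hb_{\al0}$ estimate one power of $r$ sharper than those for $\hb_{\al\be}$ and $\hb_{00}$ --- and the dimension/parity observation $n\ge k+2$ when $k$ and $n$ are both even, which is what keeps the $O(r^n)$ Poincar\'e-metric ambiguity from entering $\hb_{\al\be}$ below order $r^{k+2}$ in part~(3).
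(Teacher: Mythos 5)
Your proposal is correct and is exactly the verification the paper has in mind: the paper simply asserts that Proposition~\ref{orders} is ``easily verified'' from~\eqref{hbar}, Proposition~\ref{U}, and the formal determination of the Poincar\'e metric, and your order bookkeeping (including the key facts $u^{\al'}{}_{,r}=O(r)$, $g_{\al\al'}(x,u(x,r),r)=O(r^2)$, and $n\geq k+2$ when $k$ and $n$ are both even) supplies precisely those details. No gaps.
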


\noindent
Since $g_{\al\al'}=O(r^2)$ and $u^{\al'}=O(r^2)$, it is evident from
\eqref{hbar} that $\hb_{\al 0}= O(r^3)$ and $\hb_{00} = 1 +O(r^2)$.  In
particular, $h_+$ is asymptotically hyperbolic since 
$|dr|^2_{\hb}=1$ at $r=0$.

\section{Extrinsic GJMS Operators}\label{gjmssection}

As described in the introduction, it was noted in~\cite{GZ} that the 
GJMS construction can be carried out for general asymptotically hyperbolic
metrics.  (In this paper, asymptotically hyperbolic metrics 
have smooth compactifications.)
The conclusions of this general GJMS/$Q$-curvature construction are 
summarized in the following proposition.
We formulate the characterization of the operators as obstructions to
the existence of smooth expansions for eigenfunctions of the Laplacian of
the asymptotically hyperbolic metric, rather than by the equivalent
characterization as log coefficients in the expansions of non-smooth
solutions.  Our choice of notation is governed by our 
intended application to extrinsic GJMS operators.

\begin{proposition}\label{gjmsprop}
Let $Y^{k+1}$ be a manifold with boundary $\Si^k$, $k\geq 1$.  Let $h_+$ be    
an asymptotically hyperbolic metric on $\mathring{Y}$.  Let $h$ be a 
representative of the conformal infinity of $h_+$ and let $r$ be a defining
function for $\Si$ satisfying $r^2h_+|_{T\Si} = h$.  Let $\ell \in \N$.   
\begin{enumerate}
\item
Given
$f \in C^\infty(\Si)$, there exists $F\in C^\infty(Y)$, uniquely determined
modulo $O(r^{2\ell})$, so that $F|_\Si = f$ and $u:=r^{k/2-\ell}F$
satisfies 
\[
\left(\D_{h_+} + ((k/2)^2 - \ell^2)\right)u =  O(r^{k/2+\ell}).  
\]
The function
\begin{equation}\label{star}
\left(r^{-k/2-\ell}\left(\D_{h_+}
+ ((k/2)^2 - \ell^2)\right)u\right)\Big|_\Si  
\end{equation}
is independent of the $O(r^{2\ell})$ ambiguity in $F$, 
independent of the choice of $r$, and can be written as 
$a_\ell P_{2\ell}f$, where
$a_\ell^{-1} = (-1)^\ell2^{2(\ell-1)}(\ell-1)!^2$ and    
$P_{2\ell}$ is a formally self-adjoint differential operator on $\Si$ with
leading term $(-\D_h)^\ell$.  If $\hh = e^{2\om}h$ for
$\om \in C^\infty(\Si)$, then
\begin{equation}\label{Ptransform2}
\widehat{P}_{2\ell} = e^{(-k/2-\ell)\om}\circ P_{2\ell}\circ
e^{(k/2-\ell)\om}.
\end{equation}
\item
There is a function $Q_{2\ell}$ which depends polynomially on $k$ so that
$P_{2\ell}1 = (k/2-\ell)Q_{2\ell}$.  For $k$ even, if $\hh = e^{2\om}h$
for $\om \in C^\infty(\Si)$, then  
\[
e^{k\om}\widehat{Q}_k = Q_k  + P_k\om. 
\]
\end{enumerate}
\end{proposition}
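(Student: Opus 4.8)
The plan is to carry out the Graham--Zworski construction~\cite{GZ} in the setting of a general asymptotically hyperbolic metric, observing that the Einstein (or approximately Einstein) hypothesis is used nowhere in deriving the statements listed here; in this sense the proposition is essentially already contained in~\cite{GZ}, and I would present the argument in outline. First I would fix the representative $h$ and replace $h_+$ by its geodesic normal form relative to $h$: there is a unique defining function $r$ near $\Si$ with $|dr|^2_{r^2 h_+}=1$, in which $h_+ = r^{-2}(dr^2 + h_r)$ and $h_0 = h$. Computing $\D_{h_+}$ from this form gives the indicial identity
\[
(\D_{h_+}+c)\,r^\sigma = \big((\sigma-k/2)^2-\ell^2\big)\,r^\sigma + O(r^{\sigma+1}),\qquad c := (k/2)^2-\ell^2,
\]
so the indicial roots are $\sigma=k/2\pm\ell$, and on a function $r^\sigma\phi(x)$ the first tangential correction, two orders higher, is $r^{\sigma+2}\D_h\phi$.

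For part~(1) I would seek a formal solution $u = r^{k/2-\ell}\sum_{j\ge0}f_j r^j$ with $f_0=f$. The coefficient of $r^{k/2-\ell+j}$ in $(\D_{h_+}+c)u$ equals $j(j-2\ell)f_j + \Lambda_j(f_0,\dots,f_{j-1})$, where $\Lambda_j$ is a natural differential operator whose top-order part is $\D_h f_{j-2}$. For $1\le j\le 2\ell-1$ the integer $j(j-2\ell)$ is nonzero, so $f_j$ is uniquely determined; Borel's lemma yields $F\in C^\infty(Y)$ realizing these Taylor coefficients, and then $u=r^{k/2-\ell}F$ satisfies $(\D_{h_+}+c)u=O(r^{k/2+\ell})$. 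Modifying $F$ by $O(r^{2\ell})$ changes $(\D_{h_+}+c)u$ only by $O(r^{k/2+\ell+1})$ since $k/2+\ell$ is the \emph{upper} indicial root, so~\eqref{star} is well defined and equals $\Lambda_{2\ell}(f_0,\dots,f_{2\ell-1})|_\Si$, precisely where solvability breaks down. Composing the recursion shows the top-order part of this obstruction is $\big(2^{2(\ell-1)}(\ell-1)!^2\big)^{-1}\D_h^{\ell}f$, so, normalized by $a_\ell^{-1}=(-1)^\ell 2^{2(\ell-1)}(\ell-1)!^2$, the operator $P_{2\ell}$ has leading term $(-\D_h)^\ell$; naturality is clear from the recursion. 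For independence of the choice of $r$ among defining functions with $r^2h_+|_{T\Si}=h$: any other such is $\widetilde r = e^{\Up}r$ with $\Up|_\Si=0$, and uniqueness modulo $O(r^{2\ell})$ forces the corresponding $\widetilde u$ to agree with $u$ modulo $O(r^{k/2+\ell})$; since $\Up|_\Si=0$ the factor $\widetilde r^{-k/2-\ell}$ does not affect the restriction to $\Si$, so~\eqref{star} is unchanged. (Equivalently,~\eqref{star} is a fixed multiple of the residue at $s=k/2+\ell$ of the scattering operator of $(\mathring{Y},h_+)$, which is intrinsic.)

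For the transformation law~\eqref{Ptransform2}: the operator $\D_{h_+}+c$ and the ambiguity class of its formal solution $u$ are intrinsic, so only the way one extracts the input $f$ and the obstruction from $u$ sees the boundary representative. Under $\hh=e^{2\om}h$ the geodesic defining function rescales at the boundary, $\widehat r = e^{\om}r\,(1+O(r))$; rewriting a solution $u_f$ built from $h$ in terms of $\widehat r$ shows it has leading coefficient $e^{(\ell-k/2)\om}f$ and obstruction $e^{-(k/2+\ell)\om}\,a_\ell P_{2\ell}f$, which is precisely~\eqref{Ptransform2}. Formal self-adjointness I would obtain as in~\cite{GZ} via a renormalized Green's identity: integrating $u_f(\D_{h_+}+c)u_g-u_g(\D_{h_+}+c)u_f$ over $\{r>\ep\}$ and letting $\ep\to0$, the bulk integrand is $r^{-1}$ times a function with boundary value $a_\ell(fP_{2\ell}g-gP_{2\ell}f)$, producing a $\log\ep$ term with that coefficient, while the boundary term at $\{r=\ep\}$ is $\ep^{1-2\ell}$ times an integral of the Wronskian in $r$ of $F_f$ and $F_g$, which carries no $\log\ep$ because the intermediate coefficients $f_j$ $(j<2\ell)$ arise from $f$ through operators whose Wronskian pairings are exact divergences; matching the two forces $\int_\Si(fP_{2\ell}g-gP_{2\ell}f)\,dv_h=0$. (More cheaply, self-adjointness follows from that of $\D_{h_+}$ via the symmetry of the scattering operator, of which $P_{2\ell}$ is a residue.)

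For part~(2): since $\D_{h_+}$ annihilates constants, in the critical dimension $k=2\ell$ (so $c=0$) the constant function solves the equation with $f=1$, whence the obstruction vanishes, i.e.\ $P_{2\ell}1=0$ there. The recursion exhibits the universal expression for $P_{2\ell}1$ as polynomial in $k$ (the denominators $j(j-2\ell)$ do not involve $k$), and this polynomial vanishes at $k=2\ell$, so it factors as $P_{2\ell}1=(k/2-\ell)Q_{2\ell}$ with $Q_{2\ell}$ a natural scalar, polynomial in $k$. Finally, the transformation rule for $Q_k$ follows by Branson's continuation argument: apply~\eqref{Ptransform2} to the constant $1$ to get the polynomial-in-$k$ identity $(k/2-\ell)\widehat Q_{2\ell}=e^{-(k/2+\ell)\om}P_{2\ell}\big(e^{(k/2-\ell)\om}\big)$, expand both sides in powers of $k/2-\ell$ about $k=2\ell$ using $P_{2\ell}1=(k/2-\ell)Q_{2\ell}$ and $e^{(k/2-\ell)\om}=1+(k/2-\ell)\om+O((k/2-\ell)^2)$, and compare the coefficients of $(k/2-\ell)^1$ to obtain $e^{k\om}\widehat Q_k=Q_k+P_k\om$. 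The step I expect to be the main obstacle is the direct proof of formal self-adjointness --- in particular, verifying that the $\{r=\ep\}$ boundary term carries no $\log\ep$, which rests on a structural feature of the recursion --- and in practice I would circumvent it using self-adjointness of $\D_{h_+}$ as indicated above.
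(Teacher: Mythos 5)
Your proposal is correct and takes essentially the same route as the paper, which proves this proposition only by invoking the general Graham--Zworski construction for asymptotically hyperbolic metrics: your outline (indicial recursion with roots $k/2\pm\ell$, obstruction at the upper root, independence arguments, covariance via the rescaled geodesic defining function, and Branson's continuation in $k$ for $Q_k$) is precisely that construction written out. One minor remark: in this purely formal setting the scattering-matrix shortcut for self-adjointness is not really available, but your renormalized Green's identity suffices and the step you flagged as the main obstacle is automatic --- since $u_f,u_g\in r^{k/2-\ell}C^\infty(Y)$ and $(\D_{h_+}+c)u_f,\,(\D_{h_+}+c)u_g\in r^{k/2+\ell}C^\infty(Y)$, the boundary term at $\{r=\ep\}$ is $\ep^{-2\ell}$ times a smooth function of $\ep$, hence carries no $\log\ep$, so the $\log\ep$ coefficient of the bulk integral, namely $a_\ell\int_\Si(fP_{2\ell}g-gP_{2\ell}f)\,dv_h$, must vanish.
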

\begin{remark}
As written, the definition of $Q_{2\ell}$ in (2) fails in the critical case
$2\ell=k$ where the factor $k/2-\ell$ vanishes.  Branson's original 
definition was by analytic 
continuation in $k$.  There are now other constructions avoiding this  
analytic continuation \cite{GZ,FG1,FeH,GP,BG, GoW1}.
\end{remark}
\begin{remark}\label{intQ}
If $k$ is even, the properties stated in Proposition~\ref{gjmsprop}
imply that $\int_\Si Q_k\,dv_h$ is conformally invariant.  This invariant
can be identified:  The volume expansion for $h_+$ reads 
\[
\operatorname{vol}_{h_+}\{r>\ep\}=c_0\ep^{-k}+c_1\ep^{1-k}+\cdots
+c_{k-1}\ep^{-1} +L\log\tfrac{1}{\ep} +O(1),
\]
where $r$ is the geodesic defining function determined by $h$.  Then
\[
\int_\Si Q_k\,dv_h = b_k L,\qquad b_k=(-1)^{k/2} 2^{k-1} (k/2)!(k/2-1)!.
\]
This is proved for asymptotically Poincar\'e--Einstein metrics
in \cite{GZ} and \cite{FG1}, and both proofs are valid for  
general asymptotically hyperbolic metrics.
The invariant $L$ was studied in \cite{GR,Z,M}
and interpreted as a conformally invariant energy of $\Si$ 
in the case that $h_+$ is the
induced metric on an asymptotically minimal 
submanifold of an asymptotically Poincar\'e--Einstein space. 
\end{remark}
\begin{remark}
The noncritical $Q$-curvatures $Q_{2\ell}$ for $\ell \neq k/2$ satisfy
the transformation law that follows from~\eqref{Ptransform2}; namely 
\[
e^{2\ell\om}\widehat{Q}_{2\ell}=Q_{2\ell} 
+(k/2-\ell)^{-1}e^{(\ell-k/2)\om}\mathring{P}_{2\ell}\left(e^{(k/2-\ell)\om}\right), 
\]
where $\mathring{P}_{2\ell}=P_{2\ell} -(k/2-\ell)Q_{2\ell}$.
\end{remark}
\begin{remark}
In the setting of Proposition~\ref{gjmsprop}, there are nonzero obstruction
operators for generic $h_+$ also for $\ell \in 1/2 +\N$.  These vanish
by parity considerations for the $h_+$ induced by even Poincar\'e metrics
on submanifolds defined by even $U_r$.  
\end{remark}
\begin{remark}\label{P2identify}
For $k=2$, one has $P_2 = -\Delta$ for any asymptotically hyperbolic metric.
\end{remark}

In Proposition~\ref{gjmsprop}, it is clear that if $\widetilde{Y}$ is a 
second manifold with boundary  
$\Si$ and $\varphi:\widetilde{Y}\rightarrow Y$ is a diffeomorphism 
that restricts to the identity on $\Si$, then for each representative $h$, 
the operators 
$P_{2\ell}$ generated by $\varphi^*h_+$ are the same as those generated by 
$h_+$.  In particular, one can take $h_+$ to be in normal form relative to
$h$.  
In the next lemma, we calculate explicitly the operators $P_2$ and
$P_4$ for a general asymptotically hyperbolic metric $h_+$ that is even and
in normal form.        
\begin{lemma}\label{generalP4}
Let 
\[
h_+ = r^{-2}\big( dr^2 + h_r\big)
\]
be an asymptotically hyperbolic metric in normal form on
$\Si\times (0,\ep_0)$, where $\Sigma$ has dimension~$k$.  Suppose  
$h_r$ has the form
\[
h_r = h + h_2 r^2 + h_4 r^4 +\blue{\cdots}.  
\]
Then
\begin{equation}\label{Psgeneral}
\begin{aligned}
P_2 &= -\Delta + \frac{k-2}{2} Q_2,\\
P_4 &= \Delta^2 +\na^\al(T_{\al\be}\na^\be) + \frac{k-4}{2} Q_{4}, 
\end{aligned}
\end{equation}
where
\begin{equation}\label{TQ}
\begin{aligned}
Q_2 &= -\tr h_{2},\\
T&= -4 h_2+(k-2)(\tr h_2) h, \phantom{\frac{k}{2}} \\ 
Q_{4} &= 8\tr h_4 +\Delta (\tr h_2) -4 |h_2|^2 +\frac{k}{2}(\tr h_2)^2.  
\end{aligned}
\end{equation}
\end{lemma}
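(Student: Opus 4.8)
The plan is to compute the coefficients in the expansion of the eigenfunction $u$ directly from the formula for $\Delta_{h_+}$ applied to functions of the form $r^{k/2-\ell}F$, extract the obstruction term $\eqref{star}$, and compare with the known leading terms $(-\Delta_h)^\ell$ to pin down the lower order contributions. First I would record the formula for the Laplacian of $h_+ = r^{-2}(dr^2 + h_r)$ acting on a function $\phi(x,r)$. Writing $v = \det h_r$, one has
\[
\Delta_{h_+}\phi = r^2\,\pa_r^2\phi + r^2\Big(\tfrac{v_r}{2v} - \tfrac{k-1}{r}\Big)\pa_r\phi + r^2\,\Delta_{h_r}\phi,
\]
where $\Delta_{h_r}$ is the Laplacian of the $r$-dependent metric $h_r$ on the slice. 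The quantity $\tfrac{v_r}{2v} = \tfrac12\pa_r\log\det h_r = \tfrac12 h_r^{\al\be}\pa_r (h_r)_{\al\be}$ is then expanded using $h_r = h + h_2 r^2 + h_4 r^4 + \cdots$; through the relevant order this produces $\tfrac{v_r}{2v} = r\tr h_2 + r^3\big(2\tr h_4 - |h_2|^2\big) + O(r^5)$, where traces and norms are taken with $h$. Similarly $h_r^{\al\be} = h^{\al\be} - r^2 h_2^{\al\be} + O(r^4)$, so $\Delta_{h_r} = \Delta_h - r^2\,\na^\al(h_{2\,\al}{}^\be\na_\be \cdot) + \cdots$ plus a first-order term coming from the variation of $\det h_r$, which I would track carefully since it contributes to the $\na^\al(T_{\al\be}\na^\be)$ piece.

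Next, for $P_2$: set $u = r^{k/2-1}F$ with $F = f + F_2 r^2 + O(r^4)$, plug into $(\Delta_{h_+} + (k/2)^2 - 1)u$, and collect powers of $r$. The leading power $r^{k/2-1}$ vanishes identically (indicial equation), the power $r^{k/2+1}$ determines $F_2$ in terms of $f$, and the obstruction $\eqref{star}$ sits at $r^{k/2+1}$ after dividing: it equals $-2F_2 \cdot(\text{const}) + \tfrac12(k/2-1)(\tr h_2) f + \Delta_h f \cdot(\dots)$, which after using $a_1 = -1$ and simplification yields $P_2 f = -\Delta_h f + \tfrac{k-2}{2}Q_2 f$ with $Q_2 = -\tr h_2$. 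For $P_4$: set $u = r^{k/2-2}F$ with $F = f + F_2 r^2 + F_4 r^4 + O(r^6)$. Now two indicial-type cancellations occur, $F_2$ is solved from the $r^{k/2}$ coefficient (giving $F_2$ proportional to $\Delta_h f$ plus a $(\tr h_2)f$ term), $F_4$ would be solved from $r^{k/2+2}$ but its coefficient in front is exactly $(k/2-2)\cdot 2$, which is the source of the critical-case subtlety; the obstruction $\eqref{star}$ is the $r^{k/2+2}$ coefficient of $r^{-k/2-2}(\Delta_{h_+}+(k/2)^2-4)u$, normalized by $a_2^{-1} = -4$. Substituting the expression for $F_2$ and the expanded operator, and organizing the result into the form $\Delta^2 + \na^\al(T_{\al\be}\na^\be) + \text{(zeroth order)}$, should reproduce $\eqref{Psgeneral}$ and $\eqref{TQ}$.

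The main obstacle I expect is the bookkeeping in the $P_4$ computation: one must keep the first-order (in $\na$) terms arising from $\pa_r\log\det h_r$ in $\Delta_{h_r}$, the cross terms from applying the $r^2\Delta_{h_r}$ operator to $F_2 r^2$ (which feeds $\Delta_h$ of a $\Delta_h f$ term, producing the $\Delta^2$ leading part), and the interaction between the $\pa_r$ terms and the explicit $r$-powers in $u$; sign and combinatorial-factor errors are easy here. A useful consistency check is that the leading term must come out to $(-\Delta_h)^2 = \Delta_h^2$ with coefficient $+1$ after normalization by $a_2$, and that setting $h_2, h_4$ to zero (the hyperbolic case) must give $P_4 = \Delta^2$ exactly, i.e.\ $T = 0$ and $Q_4 = 0$ when $h_r = h$. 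Another check is formal self-adjointness, which forces the first-order part to appear in the divergence form $\na^\al(T_{\al\be}\na^\be)$ with $T$ symmetric; this is automatic from the general theory (Proposition~\ref{gjmsprop}) and can be used to reduce the number of terms one must independently verify. Once $P_2$ and $P_4$ are in hand, the formulas for $Q_2$ and $Q_4$ follow by evaluating $P_{2\ell}1$ and using $P_{2\ell}1 = (k/2-\ell)Q_{2\ell}$ from Proposition~\ref{gjmsprop}(2): $P_2 1 = \tfrac{k-2}{2}Q_2$ gives $Q_2 = -\tr h_2$ immediately, and $P_4 1 = \Delta(\dots) + \tfrac{k-4}{2}Q_4$ combined with the direct computation of the constant term of $P_4$ yields $Q_4 = 8\tr h_4 + \Delta(\tr h_2) - 4|h_2|^2 + \tfrac{k}{2}(\tr h_2)^2$.
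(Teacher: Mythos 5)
Your overall strategy is the same as the paper's: conjugate $\D_{h_+}+((k/2)^2-\ell^2)$ by $r^{k/2-\ell}$, expand everything in powers of $r$, solve for the determined Taylor coefficients of $F$, read off \eqref{star} at the appropriate order, normalize by $a_\ell^{-1}$, and get $Q_2,Q_4$ from the zeroth-order terms via $P_{2\ell}1=(k/2-\ell)Q_{2\ell}$. The paper does exactly this, only after substituting $\rho=r^2$ (legitimate since $h_+$ is even), which compresses the bookkeeping: the conjugated operator becomes $4\rho\pa_\rho^2+4(1-\ell+\tfrac12\rho\,\sh^{\al\be}\sh_{\al\be}')\pa_\rho+\D_{\sh_\rho}+(k/2-\ell)\sh^{\al\be}\sh_{\al\be}'$, and the first variation $(\D_{\sh_\rho})'=-(\sh')^{\al\be}\na^2_{\al\be}-\bigl(\na_\be(\sh')^{\al\be}-\tfrac12\na^\al(\sh')^{\be}{}_{\be}\bigr)\na_\al$ supplies precisely the first-order term you flagged but did not write out. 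So the route is not genuinely different; however, your description of the mechanics contains concrete errors that touch the key structural point.

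First, the indicial structure. With $u=r^{k/2-\ell}F$ and $F=f+F_2r^2+F_4r^4+\cdots$, the coefficient multiplying $F_{2j}$ at order $r^{k/2-\ell+2j}$ is $4j(j-\ell)$, which vanishes at $j=\ell$ for \emph{every} $k$. Thus for $\ell=1$ the coefficient of $r^{k/2+1}$ does \emph{not} determine $F_2$ (that is exactly the $O(r^{2\ell})$ ambiguity in Proposition~\ref{gjmsprop}), and $F_2$ drops out of \eqref{star} entirely rather than entering with some constant; and for $\ell=2$ the coefficient in front of $F_4$ at order $r^{k/2+2}$ is $0$, not $(k/2-2)\cdot 2$. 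Indeed, if that coefficient were $k-4\neq 0$, the obstruction would depend on the ambiguous $F_4$ and $P_4$ would not be well defined for $k\neq 4$; the factor $k/2-2$ has nothing to do with this and only appears through $P_41=(k/2-2)Q_4$, which is where the critical-case ($2\ell=k$) subtlety for the $Q$-curvature lives. Second, the normalization: $a_2^{-1}=(-1)^2 2^{2}\,(1!)^2=+4$, not $-4$; with $-4$ your leading term would come out $-\Delta^2$, and your own consistency check would force the correction. With these two points repaired (and the $\tfrac12\na^\al(\tr h_2)\na_\al$ piece of the varied Laplacian included), your computation reproduces \eqref{Psgeneral}--\eqref{TQ} exactly as in the paper.
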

\begin{proof}
It is useful to introduce $\rho = r^2$ since $h_+$ is even.  Then   
$h_+=\frac{d\rho^2}{4\rho^2} + \frac{\mathrm{h}_\rho}{\rho}$, where  
$\sh_\rho: = h_r = h_{\sqrt{\rho}}$.  At $\rho =0$ we have   
\begin{equation}\label{hderivs}
  \sh'= h_2,\qquad\quad \sh''= 2h_4,
\end{equation}
where $' = \pa_\rho$.  The operator $\D_{h_+}$ takes the form 
\[
\D_{h_+} = 4\rho^2\pa_\rho^2 + 2(2-k)\rho\pa_\rho +
2\rho^2\sh^{\al\be}\sh_{\al\be}'\pa_\rho +\rho\D_{\sh_\rho}.
\]
Straightforward calculation shows that
\begin{multline}\label{Oprho}
r^{\ell-k/2-2}\circ \Big(\D_{h_+}+\left((k/2)^2-\ell^2\right)\Big)\circ
r^{k/2-\ell}\\
= 4\rho \pa_\rho^2
+4\big(1-\ell +\tfrac12 \rho \sh^{\al\be}\sh_{\al\be}'\big)\pa_\rho 
+\D_{\sh_\rho} + \left(k/2-\ell\right)\sh^{\al\be}\sh_{\al\be}'.
\end{multline}
Setting $\ell =1$ and $\rho =0$ and recalling the definition of $P_2$ in 
Proposition~\ref{gjmsprop} give
$P_2 = -\D -(k/2-1)\tr h_2$ as claimed. 

When $\ell=2$, evaluating the equation
$r^{\ell-k/2-2} \Big(\D_{h_+}
+\left((k/2)^2-\ell^2\right)\Big)\big(r^{k/2-\ell}F\big)=0$
at $\rho=0$ gives
\begin{equation}\label{Fprime}
4F' = \left(\D +\frac{k-4}{2}\tr h_2\right) F.  
\end{equation}
Upon differentiating at $\rho =0$, the $\pa_\rho^2F$ terms drop out and one
sees that \eqref{star} equals 
\[
\left(\D +\frac{k}{2}\tr \sh'\right)F' 
+ \left(\left(\D_{\sh_\rho}\right)'
+\frac{k-4}{2}\left(\sh^{\al\be}\sh_{\al\be}'\right)'\right)F\, .  
\]
Now
\begin{equation}\label{somederivs}
\begin{aligned}
  \left(\D_{\sh_\rho}\right)' \;\;\;&= -(\sh')^{\al\be}\na^2_{\al\be}
-\left(\na_\be(\sh')^{\al\be}
-\tfrac12 \na^\al(\sh')^\be{}_{\be}\right)\na_\al,\\  
\left(\sh^{\al\be}\sh_{\al\be}'\right)'&= \tr \sh''-|\sh'|^2.
\end{aligned} 
\end{equation}
Substituting~\eqref{hderivs},~\eqref{Fprime},~\eqref{somederivs} and
multiplying by $a_2^{-1}=4$ give 
\begin{multline*}
P_4 = \left(\D +\frac{k}{2}\tr h_2\right)\left(\D
+\frac{k-4}{2}\tr h_2\right)\\
-4(h_2)^{\al\be}\na^2_{\al\be}
-4\left(\na_\be(h_2)^{\al\be}
-\tfrac12 \na^{\al}(h_2)^\be{}_{\be}\right)\na_\al
+2(k-4)\left(2\tr h_4 -|h_2|^2\right).
\end{multline*}
Elementary manipulations reduce this to the expression for $P_4$ written in
\eqref{Psgeneral}.   
\end{proof}

Later we will need the following lemma, which identifies the contribution
of $h_{2\ell}$ to $P_{2\ell}$.  
\begin{lemma}\label{Qtrace}
Suppose we are in the setting of Lemma~\ref{generalP4}.  Let $\ell\geq 1$.
Write $P_{2\ell}=\mathring{P}_{2\ell}+(k/2-\ell)Q_{2\ell}$.  Then 
$\mathring{P}_{2\ell}$ depends only on $h_{2j}$ for $j<\ell$, and  
\begin{equation}\label{Qdepend}
Q_{2\ell}=\ell a_\ell^{-1}\tr h_{2\ell} +\mathring{Q}_{2\ell},
\end{equation}
where $\mathring{Q}_{2\ell}$ also depends only on $h_{2j}$ for $j<\ell$.  
\end{lemma}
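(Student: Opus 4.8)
The statement is essentially a bookkeeping claim about how the top-order metric coefficient $h_{2\ell}$ enters the obstruction $P_{2\ell}$, so the natural approach is to revisit the recursive construction underlying Proposition~\ref{gjmsprop} and track the dependence on $h_{2j}$. Working in normal form as in Lemma~\ref{generalP4}, write $\rho=r^2$ and $\sh_\rho = h_r$, so that the operator in \eqref{Oprho},
\[
\cL_\ell := r^{\ell-k/2-2}\circ\Big(\D_{h_+}+\big((k/2)^2-\ell^2\big)\Big)\circ r^{k/2-\ell}
= 4\rho\pa_\rho^2 + 4\big(1-\ell+\tfrac12\rho\,\sh^{\al\be}\sh_{\al\be}'\big)\pa_\rho + \D_{\sh_\rho} + (k/2-\ell)\sh^{\al\be}\sh_{\al\be}',
\]
has coefficients that are power series in $\rho$ whose coefficient of $\rho^{j-1}$ involves only $h_2,\dots,h_{2j}$, and in fact $h_{2j}$ appears in that coefficient only through $\tr h_{2j}$ in the last two (zeroth-order) terms, linearly. (The $\D_{\sh_\rho}$ term's $\rho^{j-1}$-coefficient is a second-order operator built from $\sh'$ through $\sh^{(j)}$, i.e.\ from $h_2,\dots,h_{2j}$; the first fractional-derivative term likewise; and $\sh^{\al\be}\sh_{\al\be}' = \tr_{\sh}\sh'$ has $\rho^{j-1}$-coefficient equal to $\tr h_{2j}$ plus lower-order data.)

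The recursion solves $\cL_\ell F = O(\rho^\ell)$ by determining $F = f + F_1\rho + \cdots + F_{\ell-1}\rho^{\ell-1}$ order by order: matching the coefficient of $\rho^{m-1}$ gives $4m F_m = (\text{stuff involving } F_0,\dots,F_{m-1} \text{ and the coefficients of }\cL_\ell)$, so $F_m$ for $m\le \ell-1$ is a differential operator applied to $f$ whose coefficients involve only $h_2,\dots,h_{2m}$ — hence only $h_{2j}$ with $j<\ell$. The obstruction $a_\ell P_{2\ell}f$ is \eqref{star}, namely the coefficient of $\rho^{\ell-1}$ in $\cL_\ell F$, which is a universal expression in $F_0,\dots,F_{\ell-1}$ and in the coefficients of $\cL_\ell$ up to order $\rho^{\ell-1}$. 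The only place $h_{2\ell}$ can enter is through the $\rho^{\ell-1}$-coefficient of the coefficients of $\cL_\ell$ acting on $F_0 = f$; by the remark above, that dependence is exactly the zeroth-order term $(k/2-\ell)\cdot(\text{coeff.\ of }\rho^{\ell-1}\text{ in }\tr_\sh\sh')\cdot f = (k/2-\ell)(\tr h_{2\ell})f + (\text{$h_{2j}$, $j<\ell$})$. Everything else — the $\D_{\sh_\rho}$ and first fractional-derivative contributions, and the contributions of $F_1,\dots,F_{\ell-1}$ — depends only on $h_{2j}$ with $j<\ell$. Therefore $a_\ell P_{2\ell} = (\text{operator in } h_{2j}, j<\ell) + (k/2-\ell)(\tr h_{2\ell})$, which on comparison with $P_{2\ell} = \mathring P_{2\ell} + (k/2-\ell)Q_{2\ell}$ and the definition $P_{2\ell}1 = (k/2-\ell)Q_{2\ell}$ from Proposition~\ref{gjmsprop}(2) gives $a_\ell(k/2-\ell)Q_{2\ell} = (k/2-\ell)\tr h_{2\ell} + (\text{$h_{2j}$, $j<\ell$, zeroth order})$. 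Dividing by $k/2-\ell$ yields $Q_{2\ell} = a_\ell^{-1}\tr h_{2\ell} + \mathring Q_{2\ell}$ with $\mathring Q_{2\ell}$ depending only on lower coefficients, and $\mathring P_{2\ell} = a_\ell^{-1}(\text{the operator-in-lower-}h_{2j}\text{ part})$ depends only on $h_{2j}$, $j<\ell$ — except that the stated identity has $\ell a_\ell^{-1}$, not $a_\ell^{-1}$, as the coefficient of $\tr h_{2\ell}$.

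That discrepancy flags the one genuine subtlety: the coefficient of $\tr h_{2\ell}$ in $Q_{2\ell}$ is $\ell a_\ell^{-1}$, not $a_\ell^{-1}$, so $h_{2\ell}$ must also feed in \emph{once more} through $F_{\ell-1}$ in a way that contributes a multiple of $\tr h_{2\ell}$. Re-examining: $F_{\ell-1}$ is determined by $4(\ell-1)F_{\ell-1} = \D F_0 + (\text{zeroth order in }h_{2j},\ j\le\ell-1) F_0 + (\text{terms in }F_1,\dots,F_{\ell-2})$, so $F_{\ell-1}$ does \emph{not} see $h_{2\ell}$; but when we form the $\rho^{\ell-1}$ coefficient of $\cL_\ell F$, the term $4\rho\pa_\rho^2 F$ contributes $4(\ell-1)(\ell-2)\cdots$ — actually the operator-valued coefficients evaluated on $F_m$ for $m<\ell-1$ can each carry a $\rho^{\ell-1-m}$-coefficient, and those never involve $h_{2\ell}$. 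The resolution is a careful recount: I expect that tracking the normalization $a_\ell^{-1}=(-1)^\ell 2^{2(\ell-1)}(\ell-1)!^2$ together with the factor $4m$ in each recursion step produces exactly the extra factor $\ell$, and this normalization check — matching against the $\ell=1,2$ cases of Lemma~\ref{generalP4}, where $Q_2 = -\tr h_2 = 1\cdot a_1^{-1}\tr h_2$ (since $a_1^{-1}=1$... here one must be careful of a sign, as $Q_2=-\tr h_2$, so actually $\ell a_\ell^{-1}$ with $a_1^{-1}=-1$? one checks $(-1)^1 2^0 0!^2 = -1$, consistent) and $Q_4 = 8\tr h_4 + \cdots$ versus $2 a_2^{-1}\tr h_4$ with $a_2^{-1}=(+1)\cdot 4\cdot 1 = 4$, giving $2\cdot 4 = 8$, consistent — is the only nontrivial part. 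So the plan is: (1) expand $\cL_\ell$ and identify which coefficients carry $h_{2\ell}$ and how (linearly, through $\tr h_{2\ell}$, in two places: the zeroth-order term of $\cL_\ell$ itself, and — via $\D_{\sh_\rho}$'s $\rho^{\ell-1}$-coefficient — nowhere else, since that is second-order in derivatives of $\sh$ and $\sh'$ but not of $\sh$ alone... actually $\D_{\sh_\rho}$ does depend on $\sh_\rho$ itself, not just $\sh'$, so its $\rho^{\ell-1}$-coefficient involves $h_{2\ell}$ linearly through the Laplacian's metric — but acting on $f$ this is a second-order operator, contributing to $\mathring P_{2\ell}$ via $h_{2\ell}$!).

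Hold on — that last observation means $\mathring P_{2\ell}$ \emph{might} depend on $h_{2\ell}$ after all, which would contradict the statement; so the real content of the lemma is that the $h_{2\ell}$-dependence coming from $\D_{\sh_\rho}$ at order $\rho^{\ell-1}$ \emph{cancels} against $h_{2\ell}$-dependence from elsewhere in \eqref{star}, leaving only the $(\tr h_{2\ell})$-term. I would prove this cancellation either by the parity/self-adjointness structure or, more robustly, by a \emph{rescaling argument}: replace $h_{2\ell} \mapsto h_{2\ell} + \sigma$ for an arbitrary symmetric $2$-tensor $\sigma$ on $\Si$ — this is an allowed perturbation since $h_{2\ell}$ is unconstrained when $\ell < n/2$ (and the only coefficient appearing at this order in the normal-form data) — and show by a direct first-variation computation through \eqref{Oprho}–\eqref{star} that $\delta_\sigma(a_\ell P_{2\ell}) = (k/2-\ell)(\tr_h\sigma)$ with no derivative terms. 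Concretely: the first variation of $\D_{\sh_\rho}$ at $\rho$-order $\ell-1$ is $-\sigma^{\al\be}\na^2_{\al\be} - (\na_\be\sigma^{\al\be} - \tfrac12\na^\al\tr\sigma)\na_\al$, while the first variation of $F_{\ell-1}$ (and of the whole expansion) contributes an equal and opposite second-order piece through the $\D F$ in \eqref{Fprime}-type relations, and the surviving trace term comes from $(k/2-\ell)\delta(\tr_\sh\sh')$; carrying the normalization constants gives the factor $\ell$. The main obstacle, then, is precisely this first-variation bookkeeping: showing the second-order (and first-order) $\sigma$-terms cancel and pinning down the scalar constant as $\ell a_\ell^{-1}$ rather than something else. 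I would anchor the constant by the independently-verified $\ell=1,2$ formulas in Lemma~\ref{generalP4}, and for general $\ell$ reduce the cancellation to the observation that in the recursion $4mF_m = \D F_{m-1} + (\text{non-differential or lower})$, the perturbation $\delta_\sigma F_m$ is, for $m=\ell-1$, a second-order operator in $\sigma$ matching the one from $\D_{\sh_\rho}$ up to sign, because both ultimately come from the same $\D$ acting on $f$ and the recursion alternates signs. I therefore do not expect to need any new geometric input beyond Proposition~\ref{gjmsprop} and the explicit form \eqref{Oprho}.
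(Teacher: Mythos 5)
Your proposal derails exactly where you decide that a cancellation is needed. The worry that the $\rho^{\ell-1}$-coefficient of $\D_{\sh_\rho}$ involves $h_{2\ell}$ is a mis-indexing: in the expansion $\sh_\rho=h+h_2\rho+h_4\rho^2+\cdots$, the tensor $h_{2\ell}$ is the coefficient of $\rho^{\ell}$, so the $\rho^{\ell-1}$-coefficient of $\D_{\sh_\rho}$ (and likewise of $2\rho\,\sh^{\al\be}\sh_{\al\be}'\,\pa_\rho$, because of its explicit factor of $\rho$) involves only $h_{2j}$ with $j\le\ell-1$. Hence there is no competing second-order $h_{2\ell}$-term and nothing to cancel; the first-variation/rescaling argument you invoke to produce the cancellation is both unnecessary and, as written, not carried out---it is precisely the ``bookkeeping'' you defer, so the proof is incomplete at its central step. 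The same mis-indexing explains your puzzlement over the factor $\ell$: since $\sh'=\pa_\rho\sh_\rho$, its $\rho^{\ell-1}$ Taylor coefficient is $\ell\,h_{2\ell}$ (equivalently $\pa_\rho^{\ell}\sh_\rho|_{\rho=0}=\ell!\,h_{2\ell}$), so the zeroth-order term $(k/2-\ell)\sh^{\al\be}\sh_{\al\be}'$ contributes $(k/2-\ell)\,\ell\,\tr h_{2\ell}$ at order $\rho^{\ell-1}$, which is exactly the source of the coefficient $\ell a_\ell^{-1}$ in \eqref{Qdepend}; it has nothing to do with the $4m$ factors in the recursion, and anchoring the constant to the $\ell=1,2$ cases of Lemma~\ref{generalP4} is a consistency check, not a proof for general $\ell$.

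For comparison, the paper's argument is the corrected version of your first paragraph: from \eqref{Oprho} one has $\bigl(4\rho\pa_\rho^2+4(1-\ell+\tfrac12\rho\,\sh^{\al\be}\sh_{\al\be}')\pa_\rho+\D_{\sh_\rho}+(k/2-\ell)\sh^{\al\be}\sh_{\al\be}'\bigr)F=\rho^{\ell-1}G$ with $G|_{\rho=0}=a_\ell P_{2\ell}f$; differentiate $\ell-1$ times in $\rho$ and set $\rho=0$. The potential $\pa_\rho^{\ell}F|_{\rho=0}$ contributions from $4\rho\pa_\rho^2$ and $4(1-\ell)\pa_\rho$ cancel (this is the indicial root), so the left-hand side involves only $\pa_\rho^{j}F|_{\rho=0}$ for $j<\ell$, which are determined by $h_{2j}$, $j<\ell$, together with $\pa_\rho^{j}\sh_\rho|_{\rho=0}$ for $j\le\ell$; and the only way $\pa_\rho^{\ell}\sh_\rho|_{\rho=0}$ can appear is when all $\ell-1$ derivatives land on $\sh_{\al\be}'$ in the zeroth-order term, yielding $(k/2-\ell)\,\ell!\,\tr h_{2\ell}\,f$ against $(\ell-1)!\,a_\ell P_{2\ell}f$ on the right, which gives both the independence statement for $\mathring{P}_{2\ell}$ and \eqref{Qdepend}. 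Your proposal would become correct if you replaced the conjectured cancellation by this (absent) dependence count and tracked the factor $\ell$ from differentiating the series for $\sh_\rho$.
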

\begin{proof}
Let $f$, $F$ and $u$ be as in the statement of Proposition~\ref{gjmsprop}.
Introduce $\rho =r^2$ as in the proof of Lemma~\ref{generalP4}.  Equation
\eqref{Oprho} implies that
\[
\left(4\rho \pa_\rho^2
+4\big(1-\ell +\tfrac12 \rho \sh^{\al\be}\sh_{\al\be}'\big)\pa_\rho 
+\D_{\sh_\rho} + \left(k/2-\ell\right)\sh^{\al\be}\sh_{\al\be}'\right)F
=\rho^{\ell-1}G,
\]
where $G|_{\rho=0} = a_{\ell}P_{2\ell}f$.  Differentiate $\ell-1$ times  
with respect to $\rho$ and set $\rho =0$.  The right-hand side becomes 
$(\ell-1)!a_\ell P_{2\ell}f$.  
The left-hand side depends only
on $\pa_\rho^{j}F|_{\rho=0}$ for $j< \ell$ and
$\pa_\rho^j \sh_\rho|_{\rho =0}$ for $j\leq \ell$.  All the derivatives
$\pa_\rho^{j}F|_{\rho=0}$ for $j< \ell$ are determined and depend
only on $h_{2j}$ for $j<\ell$.  The only term which involves
$\pa_\rho^\ell \sh_\rho|_{\rho =0}$ arises when all the derivatives hit 
$\sh_{\al\be}'$ in the zeroth order term
$(k/2-\ell)\sh^{\al\be}\sh_{\al\be}'F$.  It follows that
$\mathring{P}_{2\ell}$ depends only on $h_{2j}$ for $j<\ell$.  Equation
\eqref{Qdepend} for $Q_{2\ell}$ follows upon equating the zeroth order term
on both sides and using that 
$\pa_\rho^\ell \sh_\rho|_{\rho =0} = \ell! h_{2\ell}$.  
\end{proof}

The original GJMS operators were defined by taking $h_+$ to be a Poincar\'e
metric for $h$.  For a Poincar\'e metric, the coefficients are given by 
\[
\begin{split}
  (h_2)_{\al\be} &= -\mathsf{P}_{\al\be}\\ 
  (h_4)_{\al\be} &=
  \frac14 \left(-\frac{B_{\al\be}}{k-4} +
  \mathsf{P}_\al{}^\ga \mathsf{P}_{\ga\be}\right),
\end{split}
\]
where the Schouten and Bach tensors refer to the metric $h$ on $\Si$.   
In this case~\eqref{Psgeneral} reduces to the formulas for the usual Yamabe and
Paneitz operators.

Now return to the setting of \S\ref{background}.  So $\Si^k$ is a
submanifold of $(M^n,g)$ with induced metric~$h$, $g_+$ is an even
Poincar\'e metric in normal form relative to $(M,g)$ on 
$\mathring{X}=M\times (0,\ep_0)$, $Y^{k+1}$ is an asymptotically minimal
extension of $\Si$ to $X$, and $h_+$ is the metric 
induced on $\mathring{Y}$ by~$g_+$.  Since~$g_+$ was chosen to be in normal
form relative to the chosen metric $g$, the
defining function~$r$ on $X$  is the geodesic defining function for~$g_+$  
determined by~$g$, and we denote also by $r$ its restriction to
$Y$.  The compactification $\hb = r^2 h_+$ is uniquely determined
to the orders stated in Proposition~\ref{orders}.

\bigskip
\noindent
{\it Proof of Theorem~\ref{gjmsmain} (except case (1c) for
  $\ell=k/2+1$)}.
We prove here 
cases (a) and (b), and case (c) with $\ell\leq k/2$.  The proof for case
(c) with $\ell=k/2+1$ will be given after Lemma~\ref{Udependence} below. 

Since $h_+$ is
asymptotically hyperbolic, we can construct the operators $P_{2\ell}$ and 
associated $Q$-curvatures according to Proposition~\ref{gjmsprop}.  These 
will depend only on the geometry of $\Si\subset (M,g)$ so long as the
numbers of derivatives applied to the components of $\hb$ are constrained 
as in the statement of Proposition~\ref{orders}.  
In this case, invariance of $h_+$ up to diffeomorphism under conformal
change of $g$ as discussed in \S\ref{background} 
implies that the resulting operators satisfy ~\eqref{Ptransform}.   

Next we show that for the stated 
ranges of $\ell$, the operators $P_{2\ell}$ of Proposition~\ref{gjmsprop} 
associated to $h_+$ are uniquely determined independently of the
ambiguities in $g_r$ and $U_r$.  If $n$ and $k$ are both odd, there are no 
ambiguities, so the operators are well-defined for all $\ell$.  This proves
case (a).  

Since $h_+ = r^{-2}\hb$, we have    
\begin{equation}\label{Laplacian}
\D_{h_+}= r^2\D_{\hb}+(1-k)r\hb^{ij}\pa_jr\,\pa_i
= r^2\hb^{ij}\left(\pa^2_{ij}-\overline{\Ga}_{ij}^m\pa_m\right)
+(1-k)r\hb^{ij}\pa_jr\,\pa_i.
\end{equation}
It follows that the differential operator
\begin{equation}\label{DO}
r^{-k/2+\ell}\circ\left(\D_{h_+}+ ((k/2)^2 -\ell^2)\right)\circ
r^{k/2-\ell}  
\end{equation}
has smooth coefficients up to $r=0$ which involve $\hb$ only through  
$\hb^{ij}$ and the Christoffel symbols $\overline{\Ga}_{ij}^m$.  

The construction of the obstruction
operator $P_{2\ell}$ involves differentiating the operator~\eqref{DO} up to 
$2\ell$ times at $r=0$.
By hypothesis, $2\ell<n$ if $n$ is even and
$2\ell<k+2$ if $k$ is even.  So acccording to Proposition~\ref{orders}, all 
derivatives of all components of $\hb$ of orders at most $2\ell$ are
determined independently of the ambiguities in $U_r$ and $g_r$.  Thus all
derivatives of the $\hb^{ij}$ terms which can enter are independent of
these ambiguities.  

The Christoffel symbols involve an extra derivative of $\hb$.  
The Christoffel symbol term in~\eqref{Laplacian} is 
$-r^2\hb^{ij}\overline{\Ga}_{ij}^m\pa_m$.  Since this includes the factor
$r^2$ and only the first order differentiation $\pa_m$, it follows that
each occurrence of $\overline{\Ga}_{ij}^m$ in the operator~\eqref{DO} is
multiplied by a factor of either $r$ or $r^2$.  So if~\eqref{DO} is
differentiated at most $2\ell$ times at 
$r=0$, at most $2\ell -1$ of the differentiations can hit
$\overline{\Ga}_{ij}^m$.  Thus again the maximum number of differentiations 
of a component of $\hb$ is $2\ell$.

We now explain why the operators $P_{2\ell}$ are natural.
The isometry invariance is a consequence of the uniqueness and 
invariance of the Poincar\'e metric and the minimal submanifold extension,
and of the GJMS construction as formulated in Proposition~\ref{gjmsprop}. 
The polynomial dependence on the inverse and the derivatives of the metric   
can be seen by following through each step of the construction as outlined 
below.

Our data is the metric $g$, written in terms of an adapted coordinate
system $(x,u)$ for $\Si$.  First consider the Poincar\'e metric for $g$.
Proposition 3.5 of \cite{FG2} asserts that each determined derivative   
$\pa_r^j g_r|_{r=0}$ of $g_r$ in \eqref{gnormalform} can be written as a
linear combination of contractions of Ricci curvature and covariant
derivatives of Ricci curvature of the initial metric $g$.  Of course, this
step is independent of $\Si$.  

Next consider the minimal submanifold $Y$.  If $(x,u)$ is an adapted 
coordinate system, then $Y$ is written as a graph $u=u(x,r)$ in $(x,u,r)$
space.  The 
graphing function $u(x,r)$ is determined by the minimal submanifold
equation, which is written explicitly in any adapted coordinate system in
(2.11) of \cite{GrW} in terms of 
the metric $g_r$ and the induced metric $\hb$ given by \eqref{hbar}.  It is 
easily verified from \eqref{hbar} that the derivatives 
$\pa_r^j \hb|_{r=0}$ of components of $\hb$ can be expressed as  
universal polynomials in derivatives of components of $g_r$ and its inverse
and derivatives of components of $u(x,r)$.  The Taylor expansion of
$u(x,r)$ is derived inductively from the minimal submanifold equation 
in terms of derivatives of $\hb$ and $g_r$ and previously
determined Taylor  coefficients of $u(x,r)$, beginning with $u(x,0)=0$.
It follows that each component of each determined derivative  
$\pa_r^j u|_{r=0}$ can be expressed as a universal polynomial in
derivatives of the initial metric $g$ and its inverse.  Hence the same is
true for the components of the determined derivatives $\pa_r^j \hb|_{r=0}$.  

Next consider the GJMS algorithm applied to the induced metric 
$h_+=r^{-2}\hb$.  This is perhaps best analyzed by first putting $h_+$ into
normal form relative to the metric $h$ on $\Si$ induced by $g$.
The passage to normal form is affected \cite[\S5]{GL}  
by solving the eikonal equation $|d\rt|^2_{\rt^2h_+}=1$ for the geodesic
defining function $\rt$ and then straightening the flow of the vector field
$\grad_{\rt^2h_+} \rt$ by solving ordinary differential equations.  The
Taylor expansions of the solutions for both steps can be inductively
calculated from the equations in terms of the expansions of the components
of $\hb$.  It therefore follows that when written in normal form 
\begin{equation}\label{newnf}
h_+ = \rt^{-2}\left(d\rt^2 + \htt_{\rt}\right),
\end{equation}
the components of each determined derivative $\pa_r^j\htt_{\rt}|_{\rt=0}$
can be written as universal polynomials in
derivatives of the initial metric $g$ and its inverse.  See
Lemmas~\ref{nflemma} and \ref{Udependence} below for some explicit special
cases.   

Finally, when applied to a metric in normal form \eqref{newnf}, the GJMS
algorithm produces operators whose coefficients can be written polynomially
in terms of derivatives of the Taylor coefficients of $\htt$, as in
Lemmas~\ref{generalP4} and \ref{Qtrace}.  
\stopthm

As described in the Introduction, if $g$ is Einstein, then a Poincar\'e
metric is given by~\eqref{einsteinform}.  If in addition $\Si\subset (M,g)$
is minimal, then $Y=\Si\times [0,\ep_0)$ is a minimal extension.  
We recall the argument that $Y$ is minimal for completeness.  Upon setting 
$s=-\log r$,  the metric $g_+$ takes the form   
\[
g_+ = ds^2 + (e^s - \tfrac14 \la e^{-s})^2g.
\]
But it is a general fact that if $\Si$ is a minimal submanifold of a
Riemannian manifold $(M,g)$, then $\Si\times \R$ is a minimal submanifold
of $M\times \R$ with respect to any warped product metric $g_+$ of the form
$g_+ = ds^2 + A(s)g$, where $s$ denotes the variable in $\R$ and $A(s)$ is
a positive function.

\bigskip
\noindent
{\it Proof of Theorem~\ref{factorization}.}  
Recall that $h_+$ is defined to be the pull back of $g_+$ given by
\eqref{einsteinform} to $Y=\Si\times [0,\ep_0)$.  Clearly this gives 
\begin{equation}\label{hform}
h_+ = r^{-2}\big(dr^2 + (1-\tfrac14 \la r^2)^2 h\big).
\end{equation}
If $g$ is an Einstein metric on $M^n$ satisfying $\Ric(g) = \la (n-1)g$,  
then its usual GJMS operators are given by the same formula~\eqref{Pfactor}
with $\D_h$ replaced by $\D_g$ and $k$ replaced by $n$.  There are now
several proofs of this fact.  In Chapter 7 of~\cite{FG2}, it is shown 
that in the Einstein case, the recursion for the GJMS
operators reduces to a recursion for a sequence of polynomials of the one 
variable $\D_g/\la$.  The last proof presented there solves this recursion
explicitly to obtain the factorization.  This proof carries over verbatim
for the operators determined by the asymptotically hyperbolic 
metric $h_+$ given 
by~\eqref{hform}.  The relevant point is that in~\eqref{einsteinform}, the
metric $g$ is independent of $r$.  Since $h_+$ in~\eqref{hform} is given
by the same formula with $h$ independent of $r$, the same proof applies.   
This proves~\eqref{Pfactor}.  The formula for $Q$ follows upon 
inspecting the constant term in~\eqref{Pfactor}.
\stopthm

\begin{remark}
Inspection of the constant term in the factorization~\eqref{Pfactor} shows
that the $Q_{2\ell}$ for general $\ell\geq 1$ are given by 
\[
Q_{2\ell}= \la^\ell \hhh \prod_{j=1}^{2\ell-1}\big(\tfrac{k}{2}-\ell +j\big)  
\, .
\]
\end{remark}

If $n$ is odd, then~\eqref{einsteinform} is the unique even Poincar\'e
metric in normal form for $g$ to infinite
order.  But if $n$ is even, there are others, differing at order $n$.
Proposition 7.5 of~\cite{FG2} shows that the canonical Poincar\'e
metric $g_+$ defined by~\eqref{einsteinform} is conformally invariant in
the sense that if $\gh=e^{2\om}g$ is a conformally related Einstein metric
with $\Ric(\gh)=\widehat{\la}(n-1)\gh$, then $g_+$ and 
$\gh_+=r^{-2}\big(dr^2 + (1-\frac14\widehat{\la} r^2)^2\gh\big)$ are
diffeomorphic to infinite order by a diffeomorphism that restricts to the
identity on $M$.  The diffeomorphism is unique to infinite order, since the
diffeomorphism putting any asymptotically hyperbolic metric into normal
form relative to a given representative for the conformal infinity is
unique.    

If $k$ is odd, then $Y=\Si\times [0,\ep_0)$ is to infinite order the unique 
even minimal extension.  If $k$ is even, the fact that $Y$ is smooth
implies that the obstruction to existence of a  
smooth minimal extension of $\Si$ vanishes \cite[Proposition 4.5]{GR}.
But there are other infinite order minimal extensions, corresponding to  
different choices of the freedom in $U_r$ at order $k+2$.  The following
theorem shows that the canonical minimal extension is conformally
invariant.

\begin{theorem}\label{extensioninvariance}
Let $(M,g)$ be Einstein and let $\gh=e^{2\om}g$ be a conformally related
Einstein metric.  Suppose $\Si\subset M$ is minimal relative to both $g$
and $\gh$.  Then the diffeomorphism that pulls back $\gh_+$ to $g_+$
preserves $\Si\times [0,\ep_0)$ to infinite order.   
\end{theorem}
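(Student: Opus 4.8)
The plan is to split according to the parity of $k$. For $k$ odd the conclusion is immediate from results already in hand. By Proposition~7.5 of~\cite{FG2} there is an even diffeomorphism $\Phi$ of a neighborhood of $M$ in $X$, restricting to the identity on $M$, with $\Phi^*\gh_+ = g_+$ to infinite order, and $\Phi$ is unique to infinite order. Then $\Phi^{-1}(\Yh)$, where $\Yh=\Si\times[0,\ep_0)$ denotes the canonical minimal extension for $\gh_+$, is an even submanifold of $X$ extending $\Si$ which is minimal for $\Phi^*\gh_+$; since $\Phi^*\gh_+$ and $g_+$ agree to infinite order, it is an even extension of $\Si$ whose mean curvature with respect to $g_+$ vanishes to infinite order. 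The uniqueness part of Proposition~\ref{U} for $k$ odd then forces $\Phi^{-1}(\Yh)=\Si\times[0,\ep_0)$ to infinite order. The identical reasoning proves, for $k$ even, only that $\Phi\big(\Si\times[0,\ep_0)\big)=\Si\times[0,\ep_0)$ modulo $O(r^{k+2})$: since $\Si\times[0,\ep_0)$ is an honest minimal submanifold the obstruction for $\Si$ vanishes, so the formal even minimal extension is determined to all orders once the free coefficient $U_{(k+2)}\in\Gamma(N\Si)$ is chosen, and nothing above pins down $U_{(k+2)}$. So the entire content of the theorem in the case $k$ even is to show that $\Phi$ preserves $\Si\times[0,\ep_0)$ at order $r^{k+2}$ (hence automatically at all higher orders).

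For this I would pass to the ambient metric, using the reformulation of the minimal-extension construction carried out in \S\ref{ambientsection} and modelling the argument on the proof of diffeomorphism invariance of the canonical Poincar\'e metric of an Einstein metric in~\cite{FG2}. Recall that the ambient metric $\wt g$ of $(M,[g])$ depends only on the conformal class, and that the Poincar\'e metric in normal form relative to a representative is recovered by restricting $\wt g$ to an intrinsically defined hypersurface of the ambient space and reading it off in the parametrization supplied by that representative. In this language a (formal) asymptotically minimal extension $Y$ of $\Si$ corresponds to a dilation-invariant submanifold $\wt Y$ of the ambient space, and asymptotic minimality of $Y$ in $g_+$ translates into an \emph{exact} condition on $\wt Y$. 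The key lemma, to be established exactly as in the Poincar\'e-metric case, is that when $g$ is Einstein and $\Si$ is minimal in $(M,g)$ the canonical minimal extension $\Si\times[0,\ep_0)$ corresponds to the \emph{straight cone} over $\Si$ in the explicit ambient metric of an Einstein conformal class, and that the description of this $\wt Y$ involves only $\Si$ and the conformal structure, not the chosen Einstein representative.

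Granting the key lemma, the theorem follows. A conformal change $\gh=e^{2\om}g$ within $[g]$ amounts to a different choice of section of the metric bundle, hence to a different but exact reparametrization of the same ambient metric (exact because the ambient metric of an Einstein conformal class is an honest solution of the ambient equation); $g_+$ and $\gh_+$ are the Poincar\'e metrics read off from $\wt g$ in these two parametrizations, and $\Phi$ is precisely the restriction to the distinguished hypersurface of the ambient reparametrization. Because $\Si$ is minimal with respect to both $g$ and $\gh$, the canonical minimal extensions for $g_+$ and for $\gh_+$ correspond to \emph{the same} ambient submanifold $\wt Y$, the straight cone over $\Si$; the reparametrization preserves this cone, and restricting back gives $\Phi\big(\Si\times[0,\ep_0)\big)=\Si\times[0,\ep_0)$ to infinite order.

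The main obstacle is the key lemma of the second paragraph: setting up the ambient-metric version of the minimal-extension construction with enough precision to attach a well-defined ambient submanifold $\wt Y$ to an asymptotically minimal extension, and then checking that for a minimal submanifold of an Einstein manifold this $\wt Y$ is the representative-independent straight cone. This is the submanifold analogue of the identification in~\cite{FG2} of the canonical Poincar\'e metric with the distinguished hypersurface inside the explicit ambient cone of an Einstein conformal class; the additional work is in tracking how the finite-order asymptotic-minimality condition in the Poincar\'e picture becomes an exact condition in the ambient picture, which is exactly what upgrades the order-$r^{k+2}$ statement needed in the first paragraph to one holding to infinite order.
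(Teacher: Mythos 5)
Your first paragraph is fine and matches the paper's reduction: for $k$ odd the result is immediate from the uniqueness in Proposition~\ref{U}, and for $k$ even the whole content is pinning down the free coefficient $U_{(k+2)}$, equivalently showing agreement at order $r^{k+2}$ (after which formal uniqueness gives infinite order). The gap is in your ``key lemma.'' The claim that the canonical minimal extension corresponds to an ambient submanifold (``the straight cone over $\Si$'') whose description involves only $\Si$ and the conformal structure, not the Einstein representative, is essentially a restatement of the theorem, not a lemma one can establish by merely ``setting up the ambient construction with enough precision.'' The cone $\R_+\times\Si\times(-\ep,0]$ is straight with respect to the normal-form identification $\cGt\cong\R_+\times M\times(-\ep,\ep)$ determined by $g$; the $\rho$-direction is representative-dependent, and the ambient diffeomorphism $\chi$ relating the two normal forms moves points off $\cG$ in directions built from $\grad\om$ (its Jacobian at $\rho=0$ has entries $-\om^i$). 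Whether $\chi$ carries the $\gh$-straight cone into the $g$-straight cone at order $\rho^{k/2+1}$ is exactly the assertion to be proved, and it is false without the hypothesis that $\Si$ is minimal for \emph{both} Einstein metrics --- a hypothesis your argument never uses in any computation. You also mislocate the difficulty: exactness is not the issue (the canonical extension is already exactly minimal in the Poincar\'e picture by the warped-product argument, and Proposition~\ref{minequiv} transfers minimality verbatim); the problem is that exact homogeneous minimal extensions are \emph{not unique} for $k$ even, the freedom sitting precisely at order $\rho^{k/2+1}$, so passing to the ambient picture does not by itself upgrade anything.

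What the paper actually does to close this gap is: (i) characterize the canonical extension among all homogeneous minimal extensions by a tensorial condition, the vanishing at $\rho=0$ of the component $\Lb^{\al'}_{\infty\infty;\infty\cdots\infty}$ of $\nabla^{k/2-1}\Lb$ (Proposition~\ref{canchar}, using that this component equals $\pa_\rho^{k/2+1}u^{\al'}$ plus lower-order terms); (ii) show that for the straight cone all such components with at most one $\Si$-index are built from tensors $\na^sL\otimes(\otimes^m g|_{T\Si})$ and hence vanish (Propositions~\ref{0components}, \ref{Scomponents}, Corollary~\ref{Lvanishing}); and (iii) check that the condition, transformed by the Jacobian of $\chi$, reduces to contractions of $\grad_b\om$ into covariant derivatives of $L$, which vanish by Lemma~\ref{2omega} --- and it is precisely here that the two-sided minimality of $\Si$ (giving $\om_{\al'}=0$ on $\Si$ and then $(\grad_b\om\into)^{s+1}(\na^sL)=0$) enters. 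Some substitute for these steps is indispensable; as written, your third paragraph assumes the conclusion at the point where you say the two canonical extensions ``correspond to the same ambient submanifold.''
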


\noindent
For $k$ odd, this follows from the uniqueness to infinite order of the even   
minimal extension.  We will prove Theorem~\ref{extensioninvariance} for $k$
even in \S\ref{ambientsection} using the realization of the minimal
extension in the ambient space.  So for  
minimal submanifolds $\Si$ of Einstein manifolds $(M,g)$, for all $k$ and
$n$ the canonical choices of both $g_+$ and $Y$ are invariant to 
infinite order up to diffeomorphism.  

\bigskip
\noindent
{\it Proof of Theorem~\ref{Einsteininvariance}.}
Theorem~\ref{extensioninvariance} implies that the metrics $h_+$ and
$\widehat{h}_+$ induced on
$\Si\times [0,\ep_0)$ by $g_+$ and $\gh_+$ are diffeomorphic to 
infinite order.  The diffeomorphism invariance in
Proposition~\ref{gjmsprop} shows that the operators $P_{2\ell}$ and
$\Ph_{2\ell}$ arising from the GJMS construction for $h_+$ and
$\widehat{h}_+$ satisfy~\eqref{Ptransform}.  
Theorem~\ref{factorization} shows that these operators are given by 
the factorization formula~\eqref{Pfactor}.  
\stopthm

We close this section by identifying the extrinsic operators in another 
special case: $g$ is locally conformally flat and $\Si$ is umbilic.  Any 
locally conformally flat manifold $(M,[g])$, of any dimension $n\geq 3$, is
the conformal infinity of a hyperbolic metric $g_+$ in a  
deleted neighborhood of $M$ in $X=M\times [0,\ep)$; see \cite[Proposition
7.2, Theorem 7.4]{FG2}.   The hyperbolic metric $g_+$ is uniquely
determined by $(M,[g])$ up to a diffeomorphism restricting to the identity
on $M$.  There is an explicit formula for $g_+$ when written in normal form
relative to any representative $g$, but we will not need this formula.   

Suppose $\Si\subset (M,[g])$ is umbilic of dimension $k$,
$2\leq k\leq n-1$.  (We rule out the case $k=1$ because the umbilic
condition is vacuous in this case.)  We claim that there is an extension   
$Y$ in a neighborhood of $\Si$ in $X$ which is totally geodesic with 
respect to $g_+$, and such a $Y$ is unique.
To see this, choose locally a representative $g_0 \in [g]$ of constant
sectional curvature one on an open set $\cU\subset M$ and an isometric
embedding of $(\cU,g_0)$ into the round $n$-sphere.  Use this to regard 
$\cU\subset S^n=\pa B^{n+1}$.  
Now $X$ can be realized locally as a neighborhood of $\cU$ in $B^{n+1}$ and
$g_+$ as the standard hyperbolic metric  
$\frac{4}{(1-\lvert x\rvert^2)^2}\lvert dx\rvert^2$.  A connected
component of $\Si\cap \cU$ is a piece of a sphere
$S^k \subset \cU \subset S^n$.  The sphere $S^k$ extends into $B^{n+1}$ as
a spherical cap intersecting the   
boundary orthogonally. This extension is the unique totally geodesic
extension into the hyperbolic ball.      

Since the hyperbolic metric $g_+$ and the totally geodesic extension $Y$
are uniquely determined by $(M,[g])$ and $\Si$ up to diffeomorphism, the
induced metric $h_+$ on $Y$ is too.  So the GJMS construction on $(Y,h_+)$ 
produces extrinsic operators $P_{2\ell}$ satisfying the conformal
covariance relation \eqref{Ptransform} for all $\ell\geq 1$ and for all
$n\geq 3$ and $k\geq 2$.   

Consider now the intrinsic GJMS operators for the induced conformal class
on $\Si$, which we denote by $\Pb_{2\ell}$.  We have to rule out $k=1$ here
too since intrinsic operators don't exist on 1-manifolds.  For $k=2$, the
only intrinsic operator is $\Pb_2=-\Delta = P_2$ (recall
Remark~\ref{P2identify}).  So suppose $k\geq 3$.  If as above   
we choose locally a round representative $g_0$,   
then since $\Si\cap \cU$ is a piece of a sphere, the 
metric induced by $g_0$ on $\Si\cap \cU$ is Einstein.  So the induced
conformal class contains (local) Einstein representatives.  As mentioned in
the Introduction and discussed in \cite[Chapter 7]{FG2}, for such a
conformal class, operators can be defined for 
all $\ell\geq 1$ for all representatives so that the conformal covariance 
relation holds.  For a local Einstein representative, the operator
$\Pb_{2\ell}$ is given by the factorization formula \eqref{Pfactor} with
$P_{2\ell}$ on the left-hand side replaced by $\Pb_{2\ell}$.  

\begin{proposition}
Let $g$ be locally conformally flat and $\Si\subset (M,g)$ be umbilic of
dimension $k\geq 3$.  For any $\ell\geq 1$, the  
minimal submanifold extrinsic operator $P_{2\ell}$ equals the intrinsic
operator $\Pb_{2\ell}$.  
\end{proposition}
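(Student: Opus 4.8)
The plan is to show that the minimal submanifold extrinsic GJMS operators $P_{2\ell}$ for the umbilic submanifold $\Si$ coincide with the intrinsic operators $\Pb_{2\ell}$ by exhibiting a common geometric model in which both are manifestly computed. The key observation is that for a locally conformally flat $(M,[g])$ with umbilic $\Si^k$, we may (locally) choose the round representative $g_0$ of constant sectional curvature $1$, so that the induced metric $h_0$ on a piece of $\Si$ is the round metric on $S^k$ of curvature $1$, hence Einstein with $\Ric(h_0)=(k-1)h_0$, i.e.\ $\la=1$. Then on the intrinsic side, $\Pb_{2\ell}$ is given by the factorization formula \eqref{Pfactor} (with $P_{2\ell}$ replaced by $\Pb_{2\ell}$ and $\la=1$), since the intrinsic conformal class contains a local Einstein representative. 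The work is therefore to show that the extrinsic $P_{2\ell}$ is given by the \emph{same} formula, and then invoke conformal covariance to transport the identity to an arbitrary representative.

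First I would record that the canonical Poincar\'e metric for the round sphere $(S^n,g_0)$ with $\la=1$ is, by \eqref{einsteinform}, the hyperbolic metric $g_+ = r^{-2}(dr^2 + (1-\tfrac14 r^2)^2 g_0)$; in the ball model this is the standard hyperbolic metric on $B^{n+1}$ as in the discussion preceding the proposition. The totally geodesic extension $Y$ of the spherical piece $S^k\subset S^n$ is the spherical cap meeting the boundary orthogonally, and this cap is itself a totally geodesic copy of hyperbolic $(k+1)$-space inside $B^{n+1}$. Consequently the metric $h_+$ induced on $Y$ is exactly the hyperbolic metric on $B^{k+1}$, which is the canonical Poincar\'e metric for $(S^k,h_0)$ with $\la=1$. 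At this point the extrinsic GJMS construction on $(Y,h_+)$ and the intrinsic GJMS construction on $(S^k,h_0)$ are literally the same construction applied to the same asymptotically hyperbolic metric. I would make this precise using Theorem~\ref{factorization} (or its proof): since $h_+$ has the form \eqref{hform} with $k$ as written and $\la=1$, the extrinsic operators are $P_{2\ell}=\prod_{j=1}^\ell(-\D_{h_0}+c_j)$, $c_j=(\tfrac k2+j-1)(\tfrac k2-j)$; and since the intrinsic conformal class contains the Einstein metric $h_0$ with $\la=1$, the intrinsic operators are $\Pb_{2\ell}=\prod_{j=1}^\ell(-\D_{h_0}+c_j)$ as well. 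Hence $P_{2\ell}=\Pb_{2\ell}$ for the round representative.

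Finally I would upgrade from the round (or Einstein) representative to a general one in the conformal class on $\Si$. Both $P_{2\ell}$ and $\Pb_{2\ell}$ satisfy the conformal covariance law \eqref{Ptransform} / \eqref{Ptransform2}: for $P_{2\ell}$ this is Theorem~\ref{gjmsmain}(1) together with the remark that the law depends only on the conformal class near $\Si$ and the chosen representative on $\Si$; for $\Pb_{2\ell}$ it is the corresponding statement for intrinsic GJMS operators on conformal classes containing an Einstein metric (the Fefferman--Graham factorization result, valid for all $\ell\ge 1$ in all dimensions). Since the two operators agree for one representative and transform identically under conformal rescaling of $h$, they agree for every representative; as the computation was local and $\ell$ was arbitrary, this proves $P_{2\ell}=\Pb_{2\ell}$ for all $\ell\ge 1$. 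The case $k=2$ is handled separately and trivially, $\Pb_2=-\D=P_2$ by Remark~\ref{P2identify}, which is why we assumed $k\ge 3$ in the main argument. The main obstacle, to the extent there is one, is the geometric identification that the totally geodesic cap $Y$ in the hyperbolic ball, with its induced metric, \emph{is} the canonical Poincar\'e/hyperbolic space of one lower dimension (so that the extrinsic and intrinsic constructions coincide on the nose rather than merely up to diffeomorphism); once that is in hand everything else is bookkeeping with the factorization formula and conformal covariance.
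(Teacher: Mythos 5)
Your argument is correct and follows essentially the same route as the paper: pass to the round representative $g_0$, observe that the totally geodesic cap carries the hyperbolic metric, i.e.\ the canonical Poincar\'e metric of the Einstein metric induced on $\Si$, so the extrinsic and intrinsic constructions coincide for that representative, and then transfer to general representatives by conformal covariance (the paper does not even need to write out the factorization, but invoking it is harmless). The only small inaccuracy is your claim that $\la=1$: an umbilic $\Si$ corresponds to a possibly small sphere $S^k\subset S^n$, so the induced Einstein constant can exceed $1$; since both the extrinsic and intrinsic factorizations are taken with the same induced metric and the same $\la$, this changes nothing in the argument.
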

\begin{proof}
Since both operators satisfy the same covariance 
relation it suffices to prove the equality for a single representative, and
it suffices to prove it locally.  Take $g_0$ as above.  The 
extrinsic operator $P_{2\ell}$ is constructed via the GJMS algorithm
on $(Y,h_+)$.  Since $Y$ is a piece of a totally geodesic sphere, the 
induced metric $h_+$ is hyperbolic and thus is the  canonical 
Poincar\'e metric associated to the Einstein metric induced by $g_0$ on
$\Si$.  So $P_{2\ell}$ and $\Pb_{2\ell}$ are determined by the same
construction.
\end{proof}

\section{$P_2$ and $P_4$}\label{formulas}

We now turn to the derivation of formulas for the extrinsic GJMS operators
$P_2$ and $P_4$.  We intend to use Lemma~\ref{generalP4} for this purpose.
But $h_+$ is not in normal form in the boundary identification
$\psi:Y \rightarrow \Si\times [0,\ep_0)$   
introduced in \S\ref{background}.  This is clear from~\eqref{hbar}:  the
equations $\hb_{\al 0}=0$, $\hb_{00}=1$ need not hold.  The defining
function $r$ on $Y$ is the restriction of a geodesic defining function
for $g_+$ on $X$ but need not be a geodesic defining function for $h_+$.
We must first rewrite $h_+$ in normal form in order to apply
Lemma~\ref{generalP4}.   

By Proposition~\ref{orders} and the statement immediately thereafter, the
beginning terms of $\hb$ in~\eqref{hbar} can be written in the following
form:  
\begin{equation}\label{hbcoeffs}
\begin{aligned}
\hb_{\al\be}&=h_{\al\be}+D_{\al\be}r^2+K_{\al\be}r^4+o(r^4)\, ,\\ 
\hb_{\al 0}&= A_{\al}r^3+o(r^3)\, ,\\
\hb_{00}&=1+Er^2 +F r^4+o(r^4)\, ,
\end{aligned}
\end{equation}
relative to the boundary identification $\psi$, for some coefficients 
$D_{\al\be}$, $K_{\al\be}$, $A_{\al}$, $E$, $F$ defined on $\Si$.

\begin{lemma}\label{nflemma}
  Let $h_+=r^{-2}\hb$ be an asymptotically hyperbolic metric on
  $\Si\times (0,\ep_0)_r$ with Taylor    
coefficients given by~\eqref{hbcoeffs}.  When written in normal form 
\eqref{newnf} with respect to the same representative $h$ on $\Si$,  
the Taylor expansion of $\htt_{\rt}$ has the form 
\begin{equation}\label{newh}
\htt_{\rt} =h+\htt_2\rt^2+\htt_4\rt^4+\blue{\cdots},
\end{equation}
with
\begin{equation}\label{newcoeffs}
\begin{aligned}
  (\htt_2)_{\al\be}&=D_{\al\be}+\tfrac12 E h_{\al\be},\\
  (\htt_4)_{\al\be}&=K_{\al\be}-\tfrac12 \na_{(\al}A_{\be)}
  +\tfrac18 \na^2_{\al\be}E +\left(\tfrac14 F
  -\tfrac{3}{16}E^2\right)h_{\al\be}.  
\end{aligned}
\end{equation}
\end{lemma}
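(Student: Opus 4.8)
The plan is to carry out the passage to normal form explicitly to second order in $\rt^2$, following the standard procedure of \cite{GL}: first solve the eikonal equation $|d\rt|^2_{\rt^2 h_+}=1$ for the geodesic defining function $\rt$ associated to $h$, then straighten the gradient flow to put $h_+$ in the form \eqref{newnf}. I would write $\rt = r\hhh e^{\fe(x,r)}$ for an unknown function $\fe$ with $\fe = O(r^2)$ (this is forced by the normalization $\rt^2 h_+|_{T\Si}=h$, which is the same as $r^2h_+|_{T\Si}=h$), expand $\fe = \fe_2 r^2 + \fe_4 r^4 + \cdots$, and determine $\fe_2$, $\fe_4$ from the eikonal equation. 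The eikonal equation in terms of $\gb = r^2 h_+$ (with components \eqref{hbcoeffs}) becomes a first-order PDE for $\fe$; matching powers of $r$ gives $\fe_2$ algebraically in terms of $E$ and gives $\fe_4$ in terms of $E$, $F$, $A_\al$ and first derivatives of $\fe_2$. Then I would compute the change of coordinates $(x,r)\mapsto (\xt, \rt)$ that straightens the flow of $\grad_{\gb}\rt$, expand the new tangential coordinate as $\xt^\al = x^\al + \xt^\al_{(2)}r^2 + \cdots$, and solve the ODEs order by order; the mixed term $A_\al r^3$ in $\hb_{\al 0}$ is exactly what drives the $O(r^2)$ correction to the tangential coordinates, which is why $\na_{(\al}A_{\be)}$ appears in $(\htt_4)_{\al\be}$.

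Concretely, I expect the computation to produce $\fe_2 = \tfrac14 E$ (so that $\rt^2 = r^2(1+\tfrac12 E r^2 + \cdots)$, giving the $\tfrac12 E h_{\al\be}$ shift in $(\htt_2)_{\al\be}$ — note $D_{\al\be}$ passes through because $\hb_{\al\be}$ is already $h_{\al\be}+O(r^2)$), and then $\htt_{\rt}$ is obtained by pulling back $\gb$ under the normal-form coordinate change and dividing by $\rt^2$. To second order, the tangential block of $\htt_{\rt}$ picks up: the direct contribution $D_{\al\be}r^2 + K_{\al\be}r^4$ from $\hb_{\al\be}$ itself (reexpressed in $\rt$), the contribution from rescaling $r\mapsto \rt$ which feeds $E$, $F$ and $E^2$ into the $\rt^4$ coefficient, and the contribution from shifting $x\mapsto \xt$ which contributes the $\na_{(\al}A_{\be)}$ term together with the $\na^2 E$ term coming from the second-order straightening. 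Collecting all pieces and carefully tracking the numerical coefficients should yield exactly \eqref{newcoeffs}.

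The main obstacle will be bookkeeping: keeping straight the several sources of $\rt^4$ contributions to $(\htt_4)_{\al\be}$ — the reparametrization $r\to\rt$, the tangential coordinate shift $x\to\xt$, cross terms between these two, and the terms already present in $\hb$ — and verifying that all of them combine with the correct rational coefficients $-\tfrac12$, $\tfrac18$, $\tfrac14$, $-\tfrac{3}{16}$. A secondary subtlety is making sure that the computation of $\fe_4$ and of the second-order piece of the coordinate change uses the eikonal and flow-straightening equations correctly through order $r^4$; in particular the $\tfrac18 \na^2_{\al\be}E$ term requires differentiating $\fe_2$, so one must be attentive to where covariant versus ordinary derivatives enter (since $h$ is a genuine metric on $\Si$, not flat). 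Apart from this the argument is a routine but somewhat lengthy normal-form computation, and I would present it by (i) fixing notation for the two-step coordinate change, (ii) solving the eikonal equation to orders $r^2$ and $r^4$, (iii) solving the straightening ODEs to the same orders, and (iv) assembling the pullback and reading off \eqref{newcoeffs}.
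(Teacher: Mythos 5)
Your proposal is correct and is essentially the paper's own argument: the paper likewise appeals to existence/uniqueness of the normal form and then performs the same finite-order computation, simply presenting the change of variables in closed form, $x^\al=\xt^\al+\tfrac14\bigl(-A^\al+\tfrac14\na^\al E\bigr)\rt^4$, $r=\rt-\tfrac14 E\,\rt^3+\bigl(-\tfrac18 F+\tfrac{3}{16}E^2\bigr)\rt^5$, and verifying it directly, which matches your predicted $\fe_2=\tfrac14 E$ and your identification of how $A_\al$, $E$, $F$ enter. One small correction to your narrative: the tangential coordinate shift driven by $A_\al r^3$ (together with $\na^\al E$) first appears at order $\rt^4$, not $O(r^2)$, so your order-by-order straightening would simply find $\xt^\al_{(2)}=0$; this does not affect the rest of the plan.
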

\begin{proof}
The existence result for the normal form states that there exists a  
unique change of variables
$x=\xt(x,r)$, $r=\rt(x,r)$  with $\xt(x,0)=x$, $\rt=r+O(r^2)$,  
such that $h_+$ is in normal form~\eqref{newnf} relative to $(\xt,\rt)$.  
It is tedious but straightforward to verify that
$h_+$ takes the claimed form modulo higher order terms under the change  
\[
\begin{aligned}
x^{\al}&=\xt^{\al} +\tfrac14\left(-A^\al +\tfrac14 \na^\al
E\right)\rt^4,\\ 
r&=\rt -\tfrac14 E\,\, \rt^3 +\left(-\tfrac18 F+\tfrac{3}{16} 
E^2\right)\rt^5.
\end{aligned}
\]
In carrying out this verification, note that~\eqref{hbcoeffs} refers to the
coefficients in
\[
\hb = \hb_{\al\be}(x)dx^\al dx^\be + 2\hb_{\al 0}(x)dx^\al dr
+ \hb_{00}(x)dr^2,
\]
i.e. $h_{\al\be}$ and the coefficients of the powers of $r$ in   
\eqref{hbcoeffs} are evaluated at $x$.  Similarly, in~\eqref{newh} one has
\[
\htt_{\rt}
=
\left(h_{\al\be}(\xt)+(\htt_2)_{\al\be}(\xt)\rt^2
+(\htt_4)_{\al\be}(\xt)\rt^4+\cdots\right)d\xt^\al d\xt^\be,
\]
i.e. $h$ in~\eqref{newh} and the coefficients in~\eqref{newcoeffs}
are evaluated at $\xt$.   
\end{proof}

Observe that $h_4$ enters in~\eqref{TQ} only via its trace.  Equation  
\eqref{newcoeffs} gives
\begin{equation}\label{traceh4}
  \tr \htt_4 = K_\al{}^\al -\tfrac12 \na^\al A_\al +\tfrac18 \D E
  +\tfrac{k}{4} \left(F-\tfrac34 E^2\right).  
\end{equation}

The coefficients $D_{\al\be}$, $K_{\al\be}$, $E$, and $F$ in
\eqref{hbcoeffs} were identified in   
(5.15), (5.16) of~\cite{GR} for the metrics $h_+$ that are induced on
minimal submanifolds $Y$ by Poincar\'e metrics $g_+$.
(In~\cite{GR},~$K_{\al\be}$ was called $Q_{\al\be}$, and the convention
$H^{\al'}=L_{\al}{}^{\al\al'}$ was used.)  These coefficients were   
calculated in~\cite{GR} by first evaluating the coefficients $U_{(2)}$,
$U_{(4)}$ in the expansion~\eqref{Uexpand}  
of the normal graph $U_r$, then Taylor expanding~\eqref{hbar} 
and inserting the expansion of $U_r$ and the Poincar\'e metric 
expansion of $g_r$.    
The same process gives $A_\al$; only the leading term for
$\hb_{\al 0}$ in~\eqref{hbar} is needed for this.  The formulas are:  
\begin{equation}\label{minimalcoeffs}
\begin{split}
D_{\al\be}&=-H^{\al'}L_{\al\be\al'}-\mathsf{P}_{\al\be},\\
K_{\al\be}&=-2L_{\al\be\al'}U_{(4)}^{\al'}
+\tfrac{1}{4} R_{\al'\al\be\be'}H^{\al'}H^{\be'}
+\tfrac{1}{4} L_{\al\ga}^{\al'}L_{\be}{}^\ga{}_{\be'}H_{\al'}H^{\be'}
-\tfrac{1}{2}{}^g\na_{\al'}\mathsf{P}_{\al\be}H^{\al'}\\
&\quad +L_{\ga(\al}^{\al'}\mathsf{P}_{\be)}^{\vphantom{\al'}}{}^\ga H_{\al'}   
+\tfrac{1}{4(4-n)} B_{\al\be}+\tfrac14 \mathsf{P}_\al{}^i\mathsf{P}_{i\be}
-\mathsf{P}_{\al'(\al} \na_{\be)} H^{\al'}
+\tfrac{1}{4}\na_\al H^{\al'}\na_{\be}H_{\al'},\\ 
A_\al&=-\mathsf{P}_{\al\al'}H^{\al'}+\tfrac{1}{2}H_{\al'}\na_\al H^{\al'},\\
E&= |H|^2,\\
F&=-\mathsf{P}_{\al'\be'}H^{\al'}H^{\be'}
+8 H_{\al'}U_{(4)}^{\al'}.
\end{split}
\end{equation}
The $U_{(4)}$ expression appearing in $K_{\al\be}$ and $F$ is defined 
in~\eqref{Uexpand}.  Its explicit form is given in Proposition 5.5 of
\cite{GR}, but will not be needed here since the occurrences in
$K_\al{}^\al$ and $F$ cancel in~\eqref{traceh4}.     

\begin{theorem}\label{minimalops}
The first two extrinsic GJMS operators $P_2$, $P_4$ are given by
\begin{equation}\label{Ps}
\begin{aligned}
P_2 &= -\Delta + \frac{k-2}{2} Q_2,\\
P_4 &= \Delta^2 +\na^\al(T_{\al\be}\na^\be) + \frac{k-4}{2} Q_{4}, 
\end{aligned}
\end{equation}
where
\begin{equation}\label{TQmin}
\begin{split}
Q_2 &= \mathsf{P}_\al{}^\al + \frac{k}{2}|H|^2,\\
T_{\al\be}&= 4\mathsf{P}_{\al\be} +4 H^{\al'}L_{\al\be\al'} 
-\left[(k-2)\mathsf{P}_\ga{}^\ga 
  +\frac12 (k^2-2k+4)|H|^2\right]h_{\al\be},\\
Q_4 &= -\D\left(\mathsf{P}_\al{}^\al+\frac{k}{2}|H|^2\right)
-2\Big|\mathsf{P}_{\al\be}+ H^{\al'}L_{\al\be\al'}-\frac{1}{2}|H|^2h_{\al\be}\Big|^2\\
&\quad +2\Big|\mathsf{P}_{\al\al'}-\na_\al H_{\al'}\Big|^2
+\frac{k}{2}\left(\mathsf{P}_\al{}^\al+\frac{k}{2}|H|^2\right)^2\\ 
&\quad -2 W^\al{}_{\al'\al\be'}H^{\al'}H^{\be'}
-4 C^{\al}{}_{\al\al'}H^{\al'} -\frac{2}{n-4} B^\al{}_{\al}.  
\end{split}
\end{equation}
In these formulas, $L$ and $H$ denote the second fundamental form and mean
curvature vector of $\Si\subset M$, all curvature quantities are of the
background metric $g$, $\na_\al$ denotes the induced connection on $T\Si$
and $N\Si$, and $\D=\na^{\al}\na_{\al}$.  
\end{theorem}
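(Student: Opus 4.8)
The plan is to read $P_2$ and $P_4$ off the normal‑form expansion of the induced metric $h_+$ on the minimal extension $Y$, by stringing together three facts already in hand. Lemma~\ref{generalP4} gives $P_2$ and $P_4$ for any even asymptotically hyperbolic metric $r^{-2}(dr^2+h+h_2r^2+h_4r^4+\cdots)$ in normal form, with $P_4$ depending on $h_4$ only through $\tr h_4$. Lemma~\ref{nflemma} together with \eqref{traceh4} expresses the normal‑form coefficients $\htt_2$ and $\tr\htt_4$ of $h_+$ in terms of the coefficients $D_{\al\be}$, $K_{\al\be}$, $A_\al$, $E$, $F$ of $\hb$ relative to the boundary identification $\psi$. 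Finally \eqref{minimalcoeffs} (from \cite{GR}) supplies those coefficients explicitly when $Y$ is the minimal extension. So I would take $h_2=\htt_2$ and $h_4=\htt_4$ in \eqref{Psgeneral}--\eqref{TQ}, substitute \eqref{newcoeffs} and \eqref{traceh4}, then substitute \eqref{minimalcoeffs}, and simplify.

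The formulas for $P_2$, $Q_2$ and for the coefficient $T_{\al\be}$ of $P_4$ come out with only routine algebra. From \eqref{newcoeffs} and \eqref{minimalcoeffs} one has $h_2=-(\sP_{\al\be}+H^{\al'}L_{\al\be\al'}-\tfrac12|H|^2h_{\al\be})$, whence $\tr h_2=-\sP_\al{}^\al-\tfrac k2|H|^2$ using $L_\al{}^{\al\al'}=kH^{\al'}$; this gives $Q_2=-\tr h_2$ as stated and then $P_2$ from \eqref{Psgeneral}. Feeding $h_2$ and $\tr h_2$ into $T=-4h_2+(k-2)(\tr h_2)h$ and collecting the coefficient of $h_{\al\be}$, whose $|H|^2$ part is $2+\tfrac12 k(k-2)=\tfrac12(k^2-2k+4)$, yields the stated $T_{\al\be}$.

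The substantive step is the identification of $Q_4=8\tr h_4+\D(\tr h_2)-4|h_2|^2+\tfrac k2(\tr h_2)^2$ from \eqref{TQ}. First I would record that the $U_{(4)}$-dependent pieces cancel: the trace of $K_{\al\be}$ in \eqref{minimalcoeffs} contributes $-2L_\al{}^\al{}_{\al'}U_{(4)}^{\al'}=-2kH_{\al'}U_{(4)}^{\al'}$, while \eqref{traceh4} weights $F$ (which carries $+8H_{\al'}U_{(4)}^{\al'}$) by $\tfrac k4$, so the two contributions annihilate and $Q_4$ is a natural scalar of $(\Si,M,g)$; moreover $\D(\tr h_2)=-\D Q_2$ and $\tfrac k2(\tr h_2)^2=\tfrac k2 Q_2^2$ reproduce two of the displayed terms verbatim. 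It then remains to verify
\begin{align*}
8\tr h_4-4|h_2|^2 &= -2\big|\sP_{\al\be}+H^{\al'}L_{\al\be\al'}-\tfrac12|H|^2h_{\al\be}\big|^2+2\big|\sP_{\al\al'}-\na_\al H_{\al'}\big|^2\\
&\quad -2W^\al{}_{\al'\al\be'}H^{\al'}H^{\be'}-4C^\al{}_{\al\al'}H^{\al'}-\tfrac{2}{n-4}B^\al{}_\al.
\end{align*}
Expanding both sides into scalar monomials: the $-4|h_2|^2$ term, together with the quadratic contributions to $8\tr h_4$ coming from $\tfrac14\sP_\al{}^i\sP_{i\be}$, from $\tfrac14 L_{\al\ga}^{\al'}L_\be{}^\ga{}_{\be'}H_{\al'}H^{\be'}$, from $L_{\ga(\al}^{\al'}\sP_{\be)}{}^\ga H_{\al'}$, and from the $|H|^4$ part of $\tfrac k4(F-\tfrac34 E^2)$, reconstitutes the first square $-2|h_2|^2$ up to a correction; the mixed square $2|\sP_{\al\al'}-\na_\al H_{\al'}|^2$ is assembled from the mixed part of $\tfrac14\sP_\al{}^i\sP_{i\be}$, from the $-\sP_{\al'(\al}\na_{\be)}H^{\al'}$ and $\tfrac14\na_\al H^{\al'}\na_\be H_{\al'}$ terms, from $-\tfrac12\na^\al A_\al$, and from $\tfrac18\D E$; and the Bach term is just $8\cdot\tfrac{1}{4(4-n)}B_\al{}^\al$. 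The residual ambient‑curvature contractions ($\tfrac14 R_{\al'\al\be\be'}H^{\al'}H^{\be'}$, $-\tfrac12{}^g\na_{\al'}\sP_\al{}^\al H^{\al'}$, $-\tfrac k4\sP_{\al'\be'}H^{\al'}H^{\be'}$, and the corrections) are then rewritten, via the Gauss, Codazzi and Ricci equations, the Weyl--Schouten decomposition $R_{ijkl}=W_{ijkl}+\sP_{ik}g_{jl}-\cdots$, and the Cotton identity $C_{ijk}=\na_k\sP_{ij}-\na_j\sP_{ik}$, into the Weyl and Cotton terms; the extra traces $\sP_{\al'\be'}H^{\al'}H^{\be'}$, $\sP_\al{}^\al|H|^2$, $|H|^4$ generated along the way get absorbed into the two squares and into $\tfrac k2 Q_2^2$. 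The main obstacle is exactly this last reorganization: it is a sizeable bookkeeping exercise in roughly a dozen scalar monomials with repeated use of Gauss--Codazzi--Ricci and the curvature decompositions, keeping careful track of the $\tfrac1{n-2}$ and $\tfrac1{n-4}$ normalizations inside $\sP$ and $B$. Once the displayed identity is checked, \eqref{Ps}--\eqref{TQmin} follow at once; one should note in passing that for $k=2$ the determination of $\htt_4$ already invokes the hypothesis $n>k+2$ of Theorem~\ref{gjmsmain}(1c), so the apparent pole at $n=4$ is vacuous.
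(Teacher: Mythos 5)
Your proposal is correct and takes essentially the same route as the paper's proof: feed \eqref{minimalcoeffs} into \eqref{newcoeffs} and \eqref{traceh4}, observe the cancellation of the $U_{(4)}$ occurrences, read off $Q_2$ and $T_{\al\be}$, and reduce the $Q_4$ claim to verifying exactly the paper's identity \eqref{traceh4again} by direct expansion. One small point: that verification requires no Gauss--Codazzi--Ricci equations at all---only the definitions of the Weyl, Cotton and Bach tensors of the background metric together with the connection-comparison relation $\na_\al \sP_{\be\al'} = {}^g\na_\al \sP_{\be\al'} + L^{\be'}_{\al\be}\sP_{\be'\al'} - L^\ga_{\al\al'}\sP_{\be\ga}$ needed to rewrite $\na^\al A_\al$ (the one subtlety the paper flags); Gauss--Codazzi enters only later, in passing to the intrinsic form of the operators in Theorem~\ref{intrinsicformulas}.
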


\begin{proof}
Substitution of~\eqref{minimalcoeffs} into~\eqref{newcoeffs} gives
\begin{equation}\label{h2}
(\htt_2)_{\al\be}=
-\left(\mathsf{P}_{\al\be}+ H^{\al'}L_{\al\be\al'}
-\frac{1}{2}|H|^2h_{\al\be}\right).
\end{equation}
Taking the trace gives
\begin{equation}\label{traceh2}
\tr \htt_2 = -\left(\mathsf{P}_\al{}^\al+\frac{k}{2}|H|^2\right).
\end{equation}
Substituting these into the first two formulas of~\eqref{TQ} with $h_2$
replaced by $\htt_2$ gives immediately the formulas for $Q_2$ and
$T_{\al\be}$ in~\eqref{TQmin}.   

We claim next that
\begin{equation}\label{traceh4again}
8\tr \htt_4 = 2\Big|\mathsf{P}_{\al\al'}-\na_\al H_{\al'}\Big|^2
+2\big|\htt_2\big|^2-2 W^\al{}_{\al'\al\be'}H^{\al'}H^{\be'}
-4C^{\al}{}_{\al\al'}H^{\al'} -\frac{2}{n-4} B^\al{}_{\al}.  
\end{equation}
This can be verified by direct computation upon substituting
\eqref{minimalcoeffs} into~\eqref{traceh4}.  
All of the computations are straightforward except
for the following point.  
Note from~\eqref{traceh4} that $\tr \htt_4$
includes the term $-\frac12 \na^\al A_\al$.  Clearly
\[
\na^\al A_\al = - H^{\al'}\na^\al \mathsf{P}_{\al\al'} 
-\mathsf{P}_{\al\al'} \na^\al H^{\al'}
+\tfrac{1}{2}|\na H|^2+\tfrac{1}{2}H_{\al'}\D H^{\al'}. 
\]
In the first term, the $\na^\al$ denotes the induced connection on the
bundle $T\Si\otimes N\Si$.  One rewrites this in terms of 
the Levi-Civita connection ${}^g\na$ using the relation    
\[
\na_\al \mathsf{P}_{\be\al'} = {}^g \na_\al \mathsf{P}_{\be\al'}
+L^{\be'}_{\al\be}\mathsf{P}_{\be'\al'} -L^\ga_{\al\al'}\mathsf{P}_{\be\ga},
\]
where the first term on the right-hand side is interpreted as the
$\al\be\al'$ 
component of ${}^g\nabla \sP$.  This relation is a consequence of the
definitions of the second fundamental form and the induced connection.  See
(2.2) of~\cite{GR} and the discussion there for elaboration of this point.   

Now substituting~\eqref{h2},~\eqref{traceh2}, and~\eqref{traceh4again} into
\eqref{TQ} gives the formula for $Q_4$ in~\eqref{TQmin}.  
\end{proof}

The derivation of Theorem~\ref{minimalops} automatically produces
formulas~\eqref{Ps} for $P_2$ and $P_4$ written in terms
of curvature of the background metric $g$.  This is advantageous for
verifying the factorization formula of Theorem~\ref{factorization} when $g$
is Einstein and $\Si$ is minimal, since in that case all the terms
involving the second fundamental form drop out, the Schouten tensor is
constant, and the Bach tensor vanishes.  This 
approach also avoids application of the Gauss--Codazzi equations.   
However, it can also be useful to express the operators in terms of
intrinsic geometry, for instance to compare with the intrinsic operators on
$\Si$ and thereby to verify directly the conformal 
transformation law~\eqref{Ptransform}.
This cannot be done in all cases, though.  
Intrinsic operators do not exist when $k=1$, and intrinsic $P_4$ 
does not exist when $k=2$.   

In the rest of this section, we will denote intrinsic quantities for the
induced metric~$h$ on~$\Si$ with an overline.  For instance,
$\overline{\mathsf{P}}_{\al\be}$ is the Schouten tensor and $\overline{R}$
the scalar curvature of $h$.  Set $\Jb:= \frac{\overline{R}}{2(k-1)}$, so
that $\Jb=\overline{\mathsf{P}}_\al{}^\al$ if $k>2$.  
Recall that our convention is that $\na_{\al}$ denotes the induced
connection on $T\Si$ or $N\Si$, so we do not need the overline on
$\D=\na^\al\na_\al$.  Thus the intrinsic Yamabe and Paneitz operators on
$\Si$ are written
\[
\begin{aligned}
  \Pb_2&=-\D +\frac{k-2}{2}\Jb,\\
  \Pb_4&=\D^2
  +\na^\al\left(\Tb_{\al\be}\na^\be\right)
  +\frac{k-4}{2}\Qb_4,
\end{aligned}
\]
where
\[
\begin{aligned}
\Tb_{\al\be}&=4\overline{\mathsf{P}}_{\al\be}-(k-2)\Jb h_{\al\be},\\ 
\Qb_4&=-\D\Jb-2|\overline{\mathsf{P}}|^2 +\frac{k}{2}\Jb^2. 
\end{aligned}
\]

Extrinsic geometry enters via the following three objects.  
The (manifestly conformally invariant) Fialkow tensor \cite{F} is 
\[
\sF_{\al\be} : =
\frac{1}{k-2}\left(\Lo_{\al\ga\al'}\Lo_\be{}^{\ga\al'}-W_{\al\ga\be}{}^\ga
-\sG h_{\al\be}\right),
\]
where
\[
\sG:=\frac{1}{2(k-1)}\left(|\Lo|^2-W_{\al\be}{}^{\al\be}\right) =
\sF_{\al}{}^{\al}. 
\]
Set
\[
\sD_{\al\al'}:=\frac{1}{1-k}\left(\na^\be\Lo_{\al\be\al'}+W_{\al\be\al'}{}^\be\right).   
\]
\begin{theorem}\label{intrinsicformulas}
The extrinsic operators $P_2$, $P_4$ of Theorem~\ref{minimalops} 
can be written
\begin{align}
P_2 &= \Pb_2 + \frac{k-2}{2}\sG, & k>1 \label{P2extint},\\
P_4 &= \Pb_4 + \na^\al(\Tt_{\al\be}\na^\be) +
\frac{k-4}{2}\Qt_4, & k>2,\label{P4extint} 
\end{align}
where
\begin{equation}\label{TQtilde}
\begin{aligned}
  \Tt_{\al\be} &= 4\sF_{\al\be}-(k-2)\sG h_{\al\be},\\ 
\Qt_4 &= -\D \sG -2|\sF|^2 +\frac{k}{2}\sG^2
-4\sF_{\al\be}\overline{\mathsf{P}}^{\al\be} +k\sG \Jb +2|\sD|^2\\    
&\hspace{.2in}- 2 H^{\al'}H^{\be'} W^{\al}{}_{\al'\al\be'}
- 4 H^{\al'}C^\al{}_{\al\al'}
-\frac{2}{n-4}B_\al{}^\al.
\end{aligned}
\end{equation}
In particular,
\begin{equation}\label{TQbartilde}
\begin{aligned}
Q_2&= \Jb+\sG,\\    
T_{\al\be} &= \Tb_{\al\be}+\Tt_{\al\be},\\
Q_4 &= \Qb_4 + \Qt_4.  
\end{aligned}
\end{equation}
\end{theorem}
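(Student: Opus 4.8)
The plan is to start from the formulas \eqref{TQmin} for $P_2$ and $P_4$ in Theorem~\ref{minimalops}, which are written in terms of the background Schouten, Weyl, Cotton and Bach tensors together with $L$ and $H$, and to convert the background curvature restricted to $\Si$ into intrinsic curvature of $h$ by means of the Gauss and Codazzi equations and the Schouten decomposition $R_{ijkl}=W_{ijkl}+\mathsf{P}_{ik}g_{jl}-\mathsf{P}_{jk}g_{il}-\mathsf{P}_{il}g_{jk}+\mathsf{P}_{jl}g_{ik}$ of \S\ref{notation}. Two observations simplify matters from the outset. First, the terms $-2W^\al{}_{\al'\al\be'}H^{\al'}H^{\be'}$, $-4C^\al{}_{\al\al'}H^{\al'}$ and $-\tfrac{2}{n-4}B^\al{}_\al$ appear identically in \eqref{TQmin} and in \eqref{TQtilde}, so they are carried along with no manipulation. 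Second, once \eqref{P2extint} and \eqref{P4extint} are established, \eqref{TQbartilde} follows immediately by comparing the constant, second-order and zeroth-order parts, since $Q_{2\ell}$ is the zeroth-order term of $P_{2\ell}$ normalized as in Proposition~\ref{gjmsprop}.

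For $P_2$ the content is the scalar identity $Q_2=\Jb+\sG$. Taking the full trace over $\Si$ of the Gauss equation expresses $\bar R$, hence $\Jb$, in terms of the tangential double trace of the background curvature, $|L|^2$ and $|H|^2$; inserting the Schouten decomposition of $R_{ijkl}$ and writing $L=\Lo+Hh$, so that $|L|^2=|\Lo|^2+k|H|^2$, reorganizes this into $\Jb=\mathsf{P}_\al{}^\al+\tfrac k2|H|^2-\sG$. Then $P_2=-\D+\tfrac{k-2}{2}Q_2$ becomes \eqref{P2extint}.

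The heart of the $P_4$ case is the tangential identity
\begin{equation}\label{FialkowId}
\overline{\mathsf{P}}_{\al\be}+\sF_{\al\be}=\mathsf{P}_{\al\be}+H^{\al'}L_{\al\be\al'}-\tfrac12|H|^2 h_{\al\be},
\end{equation}
equivalently, by \eqref{h2}, $(\htt_2)_{\al\be}=-(\overline{\mathsf{P}}+\sF)_{\al\be}$: the effective order-two coefficient of the induced asymptotically hyperbolic metric $h_+$ is the intrinsic Schouten tensor corrected by the Fialkow tensor. To prove \eqref{FialkowId} one contracts the Gauss equation once to obtain $\bar R_{\al\be}$, passes to $\overline{\mathsf{P}}_{\al\be}=\tfrac1{k-2}(\bar R_{\al\be}-\Jb h_{\al\be})$, and applies the Schouten decomposition to the tangential trace $h^{\ga\de}R_{\ga\al\de\be}$ of the background curvature; the $\Lo$-quadratic part of the Gauss term cancels the $\Lo$-quadratic part of $\sF$, the tangentially contracted background Weyl term cancels the $W_{\al\ga\be}{}^\ga$ of $\sF$, and the leftover pure-trace terms are resolved using the $P_2$ identity in the form $\mathsf{P}_\al{}^\al-\Jb-\sG=-\tfrac k2|H|^2$. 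One also needs the Codazzi relation: contracting the Codazzi equation over a tangential pair and invoking the Schouten decomposition of the background curvature gives $\sD_{\al\al'}=\pm(\mathsf{P}_{\al\al'}-\na_\al H_{\al'})$, hence $|\sD|^2=|\mathsf{P}_{\al\al'}-\na_\al H_{\al'}|^2$. Granting \eqref{FialkowId}, the formula for $T_{\al\be}$ in \eqref{TQmin} is $T_{\al\be}=4(\overline{\mathsf{P}}+\sF)_{\al\be}-(k-2)(\Jb+\sG)h_{\al\be}=\Tb_{\al\be}+\Tt_{\al\be}$, which is the second-order part of \eqref{P4extint}. For the zeroth-order part, $Q_2=\Jb+\sG$ gives $-\D(\mathsf{P}_\al{}^\al+\tfrac k2|H|^2)=-\D\Jb-\D\sG$ and $\tfrac k2(\mathsf{P}_\al{}^\al+\tfrac k2|H|^2)^2=\tfrac k2\Jb^2+k\Jb\sG+\tfrac k2\sG^2$; identity \eqref{FialkowId} gives $-2|\mathsf{P}_{\al\be}+H^{\al'}L_{\al\be\al'}-\tfrac12|H|^2h_{\al\be}|^2=-2|\overline{\mathsf{P}}+\sF|^2=-2|\overline{\mathsf{P}}|^2-4\sF_{\al\be}\overline{\mathsf{P}}^{\al\be}-2|\sF|^2$; the Codazzi relation gives $+2|\mathsf{P}_{\al\al'}-\na_\al H_{\al'}|^2=+2|\sD|^2$; and the Weyl/Cotton/Bach terms stay put. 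Collecting the pieces yields $Q_4=\big(-\D\Jb-2|\overline{\mathsf{P}}|^2+\tfrac k2\Jb^2\big)+\Qt_4=\Qb_4+\Qt_4$, i.e.\ \eqref{P4extint}, and hence \eqref{TQbartilde}.

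The main obstacle is \eqref{FialkowId}: once it and the Codazzi relation for $\sD$ are in hand, the remainder is organized term-matching, but proving \eqref{FialkowId} requires carefully tracking the restriction to $\Si$ of the background Ricci tensor, which mixes tangential--tangential and normal contractions of the background curvature, and verifying that the $\tfrac1{k-2}$ prefactors, the $\Lo$-quadratic terms, and the tangentially contracted Weyl terms line up exactly with the definition of $\sF$. A convenient organization is to prove \eqref{FialkowId} by separately matching its trace --- which is just the $P_2$ identity $Q_2=\Jb+\sG$ --- and its trace-free part, the latter being precisely what the trace-free construction of the Fialkow tensor is designed to accommodate.
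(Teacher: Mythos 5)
Your proposal is correct and follows essentially the same route as the paper: the paper also reduces everything to the Gauss--Codazzi consequences $\overline{\mathsf{P}}_{\al\be}+\sF_{\al\be}=\mathsf{P}_{\al\be}+H^{\al'}L_{\al\be\al'}-\tfrac12|H|^2h_{\al\be}$ and $\sD_{\al\al'}=\mathsf{P}_{\al\al'}-\na_\al H_{\al'}$ (together with their trace), substitutes them into \eqref{TQmin}, and collects terms exactly as you outline. The only difference is that the paper cites Fialkow for these two identities rather than rederiving them, whereas you sketch their proof; your term-matching for $T_{\al\be}$ and $Q_4$ (including the cancellation of the $|H|^2$ terms and the harmless sign ambiguity in $\sD$, which enters only through $|\sD|^2$) matches the paper's computation.
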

\begin{proof}
The Gauss--Codazzi equations have as a consequence
the following relations (see~\cite{F}):  
\begin{align}
\mathsf{P}_{\al\be}+ H^{\al'}L_{\al\be\al'}-\frac{1}{2}|H|^2h_{\al\be} 
&=\overline{\mathsf{P}}_{\al\be} +\sF_{\al\be}, \label{GC1}\\
\mathsf{P}_{\al\al'}-\na_\al H_{\al'}&=\sD_{\al\al'}\label{GC2}.
\end{align}
The trace of the first equation gives
\begin{equation}\label{GCtrace}
\mathsf{P}_\al{}^\al+\frac k2 \hhh |H|^2 = \Jb +\sG.
\end{equation}

Combining~\eqref{GCtrace} with~\eqref{Ps} and \eqref{TQmin} shows that  
\[
P_2 = -\D +\frac{k-2}{2}Q_2 = -\D +\frac{k-2}{2}(\Jb + \sG)
=\Pb_2 +\frac{k-2}{2}\sG.
\]
This proves~\eqref{P2extint}.

For~\eqref{P4extint}, it suffices to show that $T_{\al\be}$ and $Q_4$ in
\eqref{TQmin} satisfy~\eqref{TQbartilde}.  Using~\eqref{GC1} and 
\eqref{GCtrace} in~\eqref{TQmin} gives 
\[
\begin{split}
T_{\al\be}&=4\Big(\overline{\mathsf{P}}_{\al\be}+\sF_{\al\be} 
+\frac{1}{2}|H|^2h_{\al\be}\Big) 
-\left((k-2)\big(\Jb+\sG-\frac{k}{2}|H|^2\big)
  +\frac12 (k^2-2k+4) |H|^2\right)h_{\al\be}\\
  &= 4\big(\overline{\mathsf{P}}_{\al\be}+\sF_{\al\be}\big)
-(k-2)(\Jb+\sG)h_{\al\be}\\
&=\Tb_{\al\be} +\Tt_{\al\be}.
\end{split}
\]

Finally, substituting~\eqref{GC1},~\eqref{GC2}, and \eqref{GCtrace} in  
\eqref{TQmin} gives
\[
\begin{split}
  Q_4 &= -\D(\Jb+\sG)-2|\overline{\mathsf{P}}+\sF|^2
  +2|\sD|^2 +\frac{k}{2}(\Jb+\sG)^2\\ 
&
\qquad\qquad\qquad
-2 W^\al{}_{\al'\al\be'}H^{\al'}H^{\be'}
-4 C^{\al}{}_{\al\al'}H^{\al'} -\frac{2}{n-4} B^\al{}_{\al}\, .
\end{split}
\]
Expanding and collecting terms, one sees that this equals  
$\Qb_4+\Qt_4$.  
\end{proof}
\begin{remark}
If we set
$\Pt_4 = P_4-\Pb_4=\na^\al(\Tt_{\al\be}\na^\be) + \frac{k-4}{2}\Qt_4$,
it follows from the conformal covariance of $P_4$ and $\Pb_4$ that  
under a conformal change $\gh=e^{2\om}g$ on $M$, we have 
\begin{equation}\label{tildetransform}
\begin{aligned}
\widehat{\Pt}_4 &= e^{(-k/2-2)\,\om|_\Si}\circ \Pt_4\circ 
e^{(k/2-2)\,\om|_\Si}, & k & \geq 3,\\
e^{4\,\om|_\Si}\widehat{\Qt}_4 &= \Qt_4  + \Pt_4(\om|_\Si), & k & =4.  
\end{aligned}
\end{equation}
Reversing the logic, these conformal transformation laws can be
checked directly and thereby used to verify by direct calculation the
conformal covariance of $P_4$.

Note that the second and third terms $-2|\sF|^2+\frac{k}{2}\sG^2$ on
the first line of formula~\eqref{TQtilde} for $\Qt_4$ are 
conformally invariant.  So~\eqref{tildetransform} still holds if they are
omitted.  However, they are needed to obtain the factorization in the case
of minimal submanifolds of Einstein manifolds.   
\end{remark}

We next prove Theorem~\ref{gjmsmain} in 
the remaining case (1c) for $\ell = k/2+1$.  The following lemma is the 
key.  It generalizes to higher order the cancellation of the occurrences of
$U_{(4)}$ in $K_\al{}^\al$ and $F$ in the formula~\eqref{traceh4} for
$\tr \htt_4$.    

\begin{lemma}\label{Udependence}
Let $Y$ be an extension of $\Si$ described as a normal graph~\eqref{Y}, 
\eqref{Uexpand} with $U_{(2)}=\frac{1}{2}H$.  
Let $m\geq 2$.  When the induced metric $h_+$ on $Y$ is written in normal
form~\eqref{newnf},~\eqref{newh}, the contribution of $U_{(2m)}$ to
$\htt_{2m}$ is $-2\mathring{L}_{\al\be\al'}U^{\al'}_{(2m)}$.    
\end{lemma}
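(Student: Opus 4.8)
The plan is to track how the ambiguous coefficient $U_{(2m)}$ in the normal graph~\eqref{Uexpand} propagates through the two-step construction: first into the Taylor coefficients of $\hb$ in the boundary identification~$\psi$, and then through the passage to normal form into~$\htt_{2m}$. The key simplifying observation is that $U_{(2m)}$ first appears at order $r^{2m}$, and since $m\ge 2$, we have $2m\ge 4$, so $U_{(2m)}$ cannot interact with itself or with any lower~$U_{(2j)}$ except linearly within a single factor of the relevant expressions. Thus it suffices to isolate the \emph{linear-in-$U_{(2m)}$} part of every quantity in sight.

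First I would examine~\eqref{hbar}. Recall $g_{\al\al'}=O(r^2)$, $u^{\al'}=O(r^2)$, and that in adapted (geodesic normal) coordinates the Taylor expansion of $g_{ij}$ in the $u$-directions is governed by the second fundamental form and curvature. The term in $\hb_{\al\be}$ linear in $U_{(2m)}$ comes only from $2g_{\al'(\al}u^{\al'}{}_{,\be)}$ together with the Taylor expansion of $g_{\al\be}(x,u,r)$ in~$u$; since $u^{\al'}_{(2m)}$ multiplies $r^{2m}$ and we want the $r^{2m}$ coefficient of $\hb_{\al\be}$, the $g_{\al'(\al}u^{\al'}{}_{,\be)}$ term contributes nothing linearly (it needs $g_{\al'\al}$ evaluated on $\Si$, which vanishes, plus a $u$-derivative, which would raise the order), and the dominant contribution is $\pa_{\al'}g_{\al\be}|_\Si \cdot u^{\al'}_{(2m)}r^{2m}$. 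In geodesic normal coordinates off $\Si$ one has $\tfrac12\pa_{\al'}g_{\al\be}|_\Si = -L_{\al\be\al'}$ (this is essentially the statement that the first $u$-derivative of the metric recovers the second fundamental form in these coordinates). Similarly, the linear-in-$U_{(2m)}$ parts of $\hb_{\al 0}$ and $\hb_{00}$ should be checked: $\hb_{00}=1+g_{\al'\be'}u^{\al'}{}_{,r}u^{\be'}{}_{,r}$, and since the leading $U_{(2)}=\tfrac12 H$ produces a cross term $g_{\al'\be'}u^{\al'}_{(2)}u^{\be'}_{(2m)}r^{2m+2}\cdot(\text{constants})$ at order $r^{2m+2}$, not $r^{2m}$; but one should confirm it does not enter the coefficient $E_{2m}$ of $r^{2m}$ in $\hb_{00}$. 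Likewise $\hb_{\al 0}$ starts at $r^3$ and its linear-in-$U_{(2m)}$ part sits at order $r^{2m+1}$, coming from $g_{\al\al'}u^{\al'}{}_{,r}$, again too high to affect the $\htt_{2m}$ computation at leading linear order—but this needs to be verified against the normal-form change of variables.

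Next I would feed this into the passage to normal form, generalizing Lemma~\ref{nflemma}. The normal-form diffeomorphism $x=\xt(x,r)$, $r=\rt(x,r)$ is built inductively from the eikonal and straightening equations, and its lower-order coefficients depend only on $\hb_{\al 0}$, $\hb_{00}$, and lower-order $\hb_{\al\be}$ data, none of which contains $U_{(2m)}$ linearly at an order that would feed back into the $\rt^{2m}$ coefficient of $\htt_{\rt}$ (this is the analogue of how, in Lemma~\ref{nflemma}, the $A_\al$, $E$, $F$ terms at a given order came from strictly lower data). Consequently the linear-in-$U_{(2m)}$ part of $(\htt_{2m})_{\al\be}$ equals the linear-in-$U_{(2m)}$ part of the coefficient of $r^{2m}$ in $\hb_{\al\be}$, which by the previous paragraph is $-2L_{\al\be\al'}U^{\al'}_{(2m)}$. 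Finally, to get $\mathring L$ rather than $L$: since $\Si$ is (asymptotically) minimal and we took $U_{(2)}=\tfrac12 H$, the trace part of $L_{\al\be\al'}U^{\al'}_{(2m)}$ along $H^{\al'}$ is absorbed into the determined lower-order data and the genuinely ambiguous piece is the tracefree-direction contribution; more precisely, one argues that only $\mathring L_{\al\be\al'}$ appears because the $H^{\al'}$-component of $U_{(2m)}$ is not free—it is fixed by the evenness/minimality normalization—so the free part of $U_{(2m)}$ pairs against $L$ only through $\mathring L$. The main obstacle will be this last point—carefully justifying that the replacement $L\mapsto\mathring L$ is legitimate, i.e.\ that the $H$-directional component of the ambiguity does not genuinely contribute, which requires a clean accounting of which component of $U_{(2m)}$ is actually undetermined in Proposition~\ref{U}(2) and a check that the putative $H^{\al'}H_{\al'}$-type terms cancel exactly as they did in~\eqref{traceh4}. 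A secondary technical point is verifying that the higher-order change of variables in the normal-form passage truly introduces no new linear-in-$U_{(2m)}$ contributions at order $r^{2m}$, which I would handle by an induction on the order mirroring the derivation in~\cite[\S5]{GL}.
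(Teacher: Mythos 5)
Your analysis of $\hb_{\al\be}$ is right and matches the paper's: the only linear-in-$U_{(2m)}$ term at order $r^{2m}$ comes from expanding $g_{\al\be}(x,u,0)$ in $u$, and $g_{\al\be,\al'}|_\Si=-2L_{\al\be\al'}$ gives $-2L_{\al\be\al'}U^{\al'}_{(2m)}$. But your order count for $\hb_{00}$ is wrong, and this is exactly where the lemma's content lies. Since $\hb_{00}=1+g_{\al'\be'}u^{\al'}{}_{,r}u^{\be'}{}_{,r}$ involves $r$-\emph{derivatives} of the graphing function, the cross term between $u^{\al'}{}_{,r}\supset H^{\al'}r$ (from $U_{(2)}=\tfrac12 H$) and $2m\,U^{\al'}_{(2m)}r^{2m-1}$ sits at order $r^{2m}$, not $r^{2m+2}$; its coefficient is $\la=4m\,H_{\al'}U^{\al'}_{(2m)}$. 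Consequently your claim that the passage to normal form introduces no new linear-in-$U_{(2m)}$ terms at order $\rt^{2m}$ fails: killing the $\la r^{2m}$ term in $\hb_{00}$ forces a change of defining function $r=\rt-\tfrac{\la}{4m}\rt^{2m+1}$, and the conformal factor $r^{-2}=\rt^{-2}\bigl(1+\tfrac{\la}{2m}\rt^{2m}+\cdots\bigr)$ then adds $\tfrac{\la}{2m}h_{\al\be}=2H_{\al'}U^{\al'}_{(2m)}h_{\al\be}$ to $\htt_{2m}$. This is precisely the term that converts $-2L_{\al\be\al'}U^{\al'}_{(2m)}$ into $-2\mathring{L}_{\al\be\al'}U^{\al'}_{(2m)}$, since $\mathring{L}_{\al\be\al'}=L_{\al\be\al'}-H_{\al'}h_{\al\be}$.

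Relatedly, your proposed mechanism for the appearance of $\mathring L$ — that the $H^{\al'}$-component of $U_{(2m)}$ is ``not free'' and hence only the trace-free pairing survives — is both unnecessary and incorrect. The lemma is a statement about the contribution of an arbitrary section $U_{(2m)}\in\Gamma(N\Si)$ (no component is constrained; the freedom at the indicial root $k+2$ in Proposition~\ref{U} is a full normal field), and $\mathring L$ is the trace-free part of $L$ in the tangential indices $\al\be$, not a projection of the normal index away from $H$. The trace part disappears because of the $\hb_{00}$ contribution transferred through the change of defining function, not because part of $U_{(2m)}$ is determined. Getting this right is essential for the application: with the full $L$ one would have $\tr$ of the $U_{(k+2)}$-contribution equal to $-2kH_{\al'}U^{\al'}_{(k+2)}\neq 0$ in general, and by Lemma~\ref{Qtrace} the operator $P_{k+2}$ would then appear to depend on the ambiguity, defeating case (1c) of Theorem~\ref{gjmsmain}.
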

\begin{proof}
First consider the Taylor expansions of $\hb_{\al\be}$, $\hb_{\al 0}$,
$\hb_{00}$ given by~\eqref{hbar}.  The coefficients of orders up to 
$r^{2m}$ in $\hb_{\al\be}$ and $\hb_{00}$ and up to $r^{2m-1}$ in 
$\hb_{\al  0}$ are the only ones that can affect $\htt_{2m}$.  Staring at
\eqref{hbar} and recalling that the $g_{ij}$ are evaluated at
$(x,u(x,r),r)$ and that $g_{\al\al'}=O(r^2)$ and $u^{\al'}=O(r^2)$, one
sees that for $j<2m$, $U_{(2m)}$ cannot enter into the coefficient of
$r^j$ in any of $\hb_{\al\be}$, $\hb_{\al 0}$, or $\hb_{00}$.  So we are 
left with determining the contribution of $U_{(2m)}$ to the $r^{2m}$
coefficient in $\hb_{\al\be}$ and $\hb_{00}$.   

It is clear that $U_{(2m)}$ cannot
contribute to the $r^{2m}$ coefficient of the second or third terms on the
right-hand side in the formula for $\hb_{\al\be}$ in~\eqref{hbar}.  To
obtain the expansion in $r$ of the first term $g_{\al\be}(x,u(x,r),r)$,
first Taylor expand $g_{\al\be}(x,u,r)$ in $r$ and then expand in $u$ the
resulting coefficients and substitute for each $u$ the expansion     
of $u(x,r)$ in $r$.  It is clear that the $r^{2m}$ coefficient in $u(x,r)$
can only contribute to the $r^{2m}$ coefficient in the resulting expansion
when $u(x,r)$ is substituted into the first term $g_{\al\be}(x,u,0)$.  And 
since on $\Si$ we have $g_{\al\be,\al'}=-2L_{\al\be\al'}$, it follows that
\[
\begin{split}
g_{\al\be}(x,u(x,r),0) & = g_{\al\be}(x,0,0) +
g_{\al\be,\al'}(x,0,0)u^{\al'}(x,r) +O(u^2)\\
&=h_{\al\be}(x) -2L_{\al\be\al'}(x)u^{\al'}(x,r) +O(u^2).  
\end{split}
\]
So the contribution of
$U_{(2m)}$ to the $r^{2m}$ coefficient in $\hb_{\al\be}$ is
$-2L_{\al\be\al'}U_{(2m)}^{\al'}$.  For $\hb_{00}$, substitute
$u^{\al'}{}_{,r}=H^{\al'}r+\cdots + 2m u_{(2m)}^{\al'}r^{2m-1}$
into~\eqref{hbar} to see that the contribution of $U_{(2m)}$ to the
$r^{2m}$ coefficient of $\hb_{00}$ is
$4m H_{\al'}U_{(2m)}^{\al'}$.
Set $\la = 4m H_{\al'}U_{(2m)}^{\al'}$.  

Now we have to transform $h_+$ to normal form as in Lemma~\ref{nflemma}.
Let $h_+^{(0)}$ be the metric obtained by  
truncating the expansion of $U_r$ at order $2m-2$, i.e.\ by using  
$U_r=U_{(2)}r^2+\cdots+U_{(2m-2)}r^{(2m-2)}$.
The above identification of the contribution of $U_{(2m)}$
shows that $h_+ = h_+^{(0)} + r^{-2}\kb$, where
\[
\begin{aligned}
\kb_{\al\be}&=-2L_{\al\be\al'}U^{\al'}_{(2m)}r^{2m}+O(r^{2m+2}),\\ 
\kb_{\al 0} &= O(r^{2m+1}),\\
\kb_{00}&= \la r^{2m} +O(r^{2m+2}).
\end{aligned}
\]
Let $\varphi$ be a diffeomorphism which restricts to the identity on $\Si$ and
satisfies $\varphi^*r=r+O(r^2)$, and for which $h_+^{(1)}:=\varphi^*h_+^{(0)}$ is
in normal form.  It follows that 
$\varphi^*h_+=h_+^{(1)}+\varphi^*\left(r^{-2}\kb\right)=r^{-2}\hb_\varphi$, 
with $\hb_\varphi$ of the form
\[
\begin{aligned}
(\hb_\varphi)_{\al\be}&=\hb^{(1)}_{\al\be}-2L_{\al\be\al'}U^{\al'}_{(2m)}r^{2m}+O(r^{2m+2}),\\ 
(\hb_\varphi)_{\al 0} &= O(r^{2m+1}),\\
(\hb_\varphi)_{00}&= 1+\la r^{2m} +O(r^{2m+2}).
\end{aligned}
\]
It is easily verified that the the coordinate change  
$x^{\al}=\xt^{\al},\,\, r=\rt -\frac{\la}{4m}\, \rt^{2m+1}$
transforms
$\varphi^*h_+$ to normal form to the same order, with  
\[
\htt_{\rt}
= \Big(\hb^{(1)}_{\al\be} -2L_{\al\be\al'}U^{\al'}_{(2m)}\rt^{2m}
+\frac{\la}{2m}h_{\al\be}\rt^{2m}+O(\rt^{2m+2})\Big)d\xt^\al d\xt^\be. 
\]
Since $\frac{\la}{2m}=2 H_{\al'}U_{(2m)}^{\al'}$, it follows that
the contribution of $U_{(2m)}$ to $\htt_{2m}$ is 
\[
-2L_{\al\be\al'}U_{(2m)}^{\al'} +2 H_{\al'}U_{(2m)}^{\al'}h_{\al\be}
=-2\mathring{L}_{\al\be\al'}U_{(2m)}^{\al'}.  \qedhere
\]
\end{proof}

\begin{remark}
In the more general context in which odd powers and/or log
terms are allowed in the expansion of $U_r$, the
argument of the proof of 
Lemma~\ref{Udependence} also identifies the contribution of these terms to
the expansion of $h_+$ in normal form.
\end{remark}

\bigskip
\noindent
{\it Proof of Theorem~\ref{gjmsmain} in case (1c) for 
  $\ell=k/2+1$.}
As in the other cases, it suffices to show that for $k$ even 
(and $n>k+2$ if $n$ is even), the operator $P_{k+2}$ depends only on the
determined coefficients $U_{(2j)}$ for $2j\leq k$, and not on the
undetermined coefficient $U_{(k+2)}$.

Let~\eqref{newh} be the expansion when $h_+$ is written in normal form
\eqref{newnf}.  Lemma~\ref{Qtrace} shows that $P_{k+2}$ depends on
$\htt_{k+2}$ only through $\tr \htt_{k+2}$.  And Lemma~\ref{Udependence}
shows that $U_{k+2}$ drops out when calculating $\tr \htt_{k+2}$, so that
$\tr \htt_{k+2}$ depends only on the $U_{(2j)}$ for $2j\leq k$, as desired.  

\stopthm

\section{Ambient Realization}\label{ambientsection}  

In this section we show how to reformulate the minimal extension $Y$ in
terms of the ambient space and use this to prove
Theorem~\ref{extensioninvariance} for $k$ even.

We first recall the ambient space and review the relationship between 
ambient and Poincar\'e metrics.  See~\cite{FG2} for details.  The metric  
bundle of the conformal manifold $(M,[g])$ is
$\cG=\{(p,t^2g(p)):p\in M, t>0\}\subset S^2T^*M$ and the ambient  
space is $\cGt:=\cG \times (-\ep,\ep)_\rho$ for $\ep>0$ small.  The choice
of representative metric $g$ induces an identification of $\cGt$ with  
$\R_+ \times M\times (-\ep,\ep)$, points of which we denote $(t,p,\rho)$.  
The ambient space admits dilations $\de_s:\cGt\rightarrow \cGt$ for $s>0$
defined by $\de_s(t,p,\rho)=(st,p,\rho)$.  A 
straight ambient metric in normal form relative to $g$ is a Lorentzian
metric on $\cGt$ of the form  
\begin{equation}\label{ambientmetric}
  \gt=2\rho dt^2+2tdtd\rho+t^2 \sg_\rho
\end{equation}
that asymptotically solves $\Ric(\gt)=0$ to an appropriate order which
depends on the dimension.  Here $\sg_\rho$ is a smooth one-parameter family
of metrics on $M$ with $\sg_0=g$.  The metric $\gt$  
is homogeneous of degree 2 with respect to the dilations $\de_s$: 
$\de_s^*\gt=s^2\gt$.

Let $T=\frac{d}{ds} \de_s|_{s=1}$ denote  
the infinitesimal generator of the dilations, so that $T=t\pa_t$ in the
identification $\cGt=\R_+ \times M\times (-\ep,\ep)$.  Note that
$\|T\|^2_{\gt}=2\rho t^2$.  Introduce a new variable $s>0$ on 
$\{\rho\leq 0\}$ by $s^2=-\|T\|^2_{\gt}$, so that $s=rt$ where 
$r=\sqrt{-2\rho}$.  In the new variables $(s,p,r)$, $\gt$ becomes    
\begin{equation}\label{grelation}
\gt = s^2g_+ -ds^2.
\end{equation}
One consequence of this relation is that the asymptotic conditions
$\Ric(\gt)=0$ and $\Ric(g_+)=-ng_+$ are equivalent.

Define the projection $\pi_X:\cGt\rightarrow X=M\times [0,\ep_0)_r$ by  
$\pi_X(t,p,\rho) = (p, r)$, $r=\sqrt{-2\rho}$.   
Define the hypersurface $\cH:=\{\|T\|^2_{\gt}=-1\}=\{s=1\}\subset \cGt$ and  
denote by $i:\cH\rightarrow \cGt$ the inclusion.  If we identify $\cH$
with $\mathring{X}$ via $\pi_X$, then clearly~\eqref{grelation} implies
$g_+ = i^*\gt$. 

Let $Y$ be a submanifold of $X$ with $1\leq \dim Y\leq \dim X -1$.  Define
the   {\it homogeneous lift} $\Yb$ of $Y$ by $\Yb=\pi_X^*(Y)$, which we can 
identify with $\R_+\times Y$ under our product decomposition
$\R_+ \times M\times (-\ep,\ep)$ and the change of variable
$r=\sqrt{-2\rho}$.  Clearly $\Yb$ 
is  invariant under the dilations $\de_s$.  Observe from~\eqref{grelation}
that $\gt|_{T\Yb}$ has Lorentzian signature.

\begin{proposition}\label{minequiv}
A submanifold $Y\subset \mathring{X}=M\times (0,\ep_0)$ is minimal with  
respect to $g_+$ if and only if its homogeneous lift $\Yb$ is minimal with
respect to $\gt$.
\end{proposition}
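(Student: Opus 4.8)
The plan is to identify $(\cGt,\gt)$, in the region where $\Yb$ lives, with a Lorentzian metric cone over $(\mathring X,g_+)$ and to deduce the statement from the standard computation of the second fundamental form of a cone. Since $Y\subset\mathring X=M\times(0,\ep_0)$ has $r>0$, its homogeneous lift $\Yb$ lies entirely in $\{\rho<0\}$; there, the change of variable $s=rt$ together with~\eqref{grelation} gives $\gt=-ds^2+s^2g_+$ in the coordinates $(s,p,r)$, exhibiting $\gt$ as a warped product with one-dimensional timelike base $(\R_+)_s$, fiber $(\mathring X,g_+)$, and warping function $f(s)=s$. In these coordinates $\Yb=\R_+\times Y$ (the $\R_+$ being the $s$-axis) is precisely the cone over $Y$. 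Since $\partial_s$ is tangent to $\Yb$ and the warped-product metric has no cross terms between $\partial_s$ and the fiber directions, the $\gt$-normal bundle of $\Yb$ consists of those fiber-tangent vectors that are $\gt$-orthogonal to the lift of $TY$; because $\gt$ restricts to $s^2g_+$ on the fibers, this is canonically the pullback of the $g_+$-normal bundle $NY$ of $Y$, with induced bundle metric scaled by the positive factor $s^2$.

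First I would record the standard warped-product connection identities for $\gt=-ds^2+s^2g_+$: writing $\na$ and ${}^{g_+}\na$ for the Levi-Civita connections of $\gt$ and $g_+$, and letting $X,W$ denote $s$-independent lifts of vector fields on $\mathring X$,
\[
\na_{\partial_s}\partial_s=0,\qquad \na_{\partial_s}X=\na_X\partial_s=\tfrac1s X,\qquad \na_XW={}^{g_+}\na_XW+s\,g_+(X,W)\,\partial_s,
\]
the sign of the last term reflecting that the base direction is timelike. Let $\Lb$ denote the second fundamental form of $\Yb\subset(\cGt,\gt)$ and $L$ that of $Y\subset(\mathring X,g_+)$. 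Taking $X,W$ tangent to $Y$ and projecting onto $N\Yb$, these identities give $\Lb(\partial_s,\partial_s)=0$ and $\Lb(\partial_s,X)=0$, since both $\na_{\partial_s}\partial_s$ and $\na_{\partial_s}X$ are tangent to $\Yb$, while $\Lb(X,W)=({}^{g_+}\na_XW)^{\perp}=L(X,W)$ because the extra term $s\,g_+(X,W)\,\partial_s$ is also tangent to $\Yb$. In other words $\Lb$ vanishes in the $\partial_s$ direction and coincides with $L$ on the $Y$ directions.

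It then remains to trace. Along $\Yb$ choose the frame $\{\partial_s,\,s^{-1}e_i\}$ of $T\Yb$, orthonormal for the induced Lorentzian metric $\gt|_{T\Yb}$, where $\{e_i\}$ is a $g_+$-orthonormal frame of $TY$ and $\partial_s$ is timelike. Since $\Lb$ vanishes in the $\partial_s$ direction, the trace of $\Lb$ with respect to $\gt|_{T\Yb}$ is
\[
-\Lb(\partial_s,\partial_s)+\sum_i\Lb(s^{-1}e_i,s^{-1}e_i)=s^{-2}\sum_i L(e_i,e_i)=s^{-2}\,\tr_{g_+}L .
\]
Hence, under the identification $N\Yb\cong NY$, the mean curvature vector of $\Yb$ in $(\cGt,\gt)$ is $s^{-2}$ times (a fixed dimensional constant times) the pullback of the mean curvature vector $H_Y$ of $Y$ in $(\mathring X,g_+)$. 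As $s>0$ everywhere on $\Yb$, it follows that $\Yb$ is minimal with respect to $\gt$ if and only if $Y$ is minimal with respect to $g_+$, which is the assertion. I do not anticipate a real obstacle; the one point needing attention is the Lorentzian signature bookkeeping --- $\partial_s$ enters with a minus sign both in the tangential/normal splitting and in the trace --- but because $\Lb$ is identically zero in the $\partial_s$ direction these signs never matter, and the argument is just the familiar fact that the metric cone over a submanifold is minimal exactly when the submanifold is.
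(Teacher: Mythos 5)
Your proposal is correct and takes essentially the same route as the paper: both rest on the observation that \eqref{grelation} exhibits $\gt$ as the warped product $s^2g_+-ds^2$ over $(\mathring{X},g_+)$, so that $\Yb=\R_+\times Y$ is the cone over $Y$. The only difference is that the paper simply invokes the general fact that $\R\times\Si$ is minimal in such a warped product if and only if $\Si$ is minimal in the fiber, whereas you verify this explicitly (with the Lorentzian signature bookkeeping) by computing the cone's second fundamental form and its trace; that computation is accurate.
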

\begin{proof}
  Observe that~\eqref{grelation} exhibits $\gt$ as a warped product of the
  form $A(s)g_+ -ds^2$.  Thus the conclusion follows from the same
  observation that we made in \S\ref{gjmssection} to identify the minimal
  extension of a   minimal submanifold of an Einstein manifold:  $Y$ is
  minimal in $\mathring{X}$ with respect to $g_+$ if and only if 
  $\Yb = \R_+\times Y$ is minimal in $\cGt$ with respect to  
  any warped product of this form.
\end{proof}

We mentioned in the introduction that in the case $k=1$, Fine
and Herfray in~\cite{FiH} use the even minimal extension $Y$ of a curve
$\Si\subset M$ to  
study conformal geodesics and conformal canonical parametrizations of
$\Si$.
In order to relate the construction to tractors, they also analyze
the unique lift of $\mathring{Y}$ to a surface contained in the
hypersurface $\cH\subset \cGt$.  But they do not consider the homogeneous
lift of $Y$.     

Proposition~\ref{minequiv} shows that the problem of extending $\Si$ to a
minimal submanifold $Y$ is equivalent to the problem of extending its
homogeneous lift $\overline{\Sigma}$ to a homogeneous minimal submanifold
$\Yb$ of $\cGt$.  Therefore Proposition~\ref{U} gives asymptotic solutions
of the latter problem.

\begin{remark}
As mentioned in the introduction, the GJMS operators were
originally constructed in \cite{GJMS} in terms of the ambient metric, and
it was shown in \cite{GZ} that the construction is equivalent to the one
used here in terms of the Poincar\'e metric.  The equivalence argument in 
\cite{GZ} applies in the general setting of an asymptotically hyperbolic
metric $h_+$ and the ambient-like metric it determines upon replacing $g_+$ 
by $h_+$ in \eqref{grelation}.  Consequently, our extrinsic minimal 
submanifold GJMS operators can also be obtained from either of the two
constructions given in \cite{GJMS} applied to the metric on the
homogeneous minimal extension $\Yb$ that is induced   
from the ambient metric $\gt$ associated to $(M,g)$.  
\end{remark}

Now we turn to the proof of
Theorem~\ref{extensioninvariance}.  The proof follows the same general
outline as the 
proof in Chapter 7 of~\cite{FG2} of the diffeomorphism invariance of the
canonical ambient metric associated to an Einstein metric.  We have to
develop a number of ingredients before we can present the proof at the end
of the section.

Let $\Si^k\subset (M^n,g)$ and let $(x^\al,u^{\al'})$ be adapted
coordinates near $\Si$ defined via the normal exponential map, as
constructed in \S\ref{background}.
We will need the following lemma, which makes explicit certain   
consequences of the fact that $\Si$ is minimal for two conformally 
related Einstein metrics.   

\begin{lemma}\label{2omega}
Suppose $g$ and $\gh=e^{2\om}g$ are conformally related Einstein metrics on
$M$.  Suppose $\Si\subset M$ is minimal for both $g$
and $\gh$.  For $s\geq 0$, let $\na^sL$ be the section of
$\otimes^{s+2}T^*\Si\otimes N\Si$ which is the $s$-th covariant  
derivative of the second fundamental form of $\Si$ with respect to $g$.
Then $(\grad_b\om\,\into)^{s+1}(\na^sL)=0$, where
$(\grad_b\om\into)^{s+1}(\na^sL)$ denotes the contraction of the tangential
gradient of $\om|_\Si$ into any $s+1$ of the $s+2$ covariant indices of 
$\na^sL$. 
\end{lemma}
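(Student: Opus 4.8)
The plan is to reduce the statement to properties of the single function $\mu:=e^{-\om}$ on $M$. The only substantive input is a pair of identities along $\Si$; once these are in hand, the general case follows by a purely formal induction on $s$.

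First I would record two consequences of the hypotheses. Since $g$ and $\gh=e^{2\om}g$ are both Einstein, the conformal transformation formula for the Ricci tensor forces the trace-free part of ${}^g\na^2\om-d\om\otimes d\om$ to vanish; since ${}^g\na^2\om-d\om\otimes d\om=-\mu^{-1}\,{}^g\na^2\mu$, this is the concircular equation ${}^g\na^2\mu=\psi\, g$ on $M$, where $\psi:=\tfrac1n\D_g\mu$. (The further identity $d\psi=\mathrm{const}\cdot d\mu$ that also follows from the Einstein condition will not be needed.) Second, minimality of $\Si$ for both metrics gives a pointwise constraint along $\Si$: the $N\Si$-valued second fundamental forms satisfy $\widehat L_{\al\be}{}^{\al'}=L_{\al\be}{}^{\al'}-g_{\al\be}(\grad_g\om)^{\al'}$, with $(\grad_g\om)^{\al'}$ the normal part of the $g$-gradient, so taking the $g$-trace gives $\widehat H=e^{-2\om}\bigl(H-(\grad_g\om)^{\perp}\bigr)$; hence $H=\widehat H=0$ forces $(\grad_g\om)^{\perp}=0$ along $\Si$, i.e.\ $\pa_{\al'}\om|_\Si=\pa_{\al'}\mu|_\Si=0$.

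Next I would extract from ${}^g\na^2\mu=\psi\, g$, restricted to $\Si$, the two driving facts. Taking tangential--tangential components and using the Gauss relation together with $\pa_{\al'}\mu|_\Si=0$ gives $\na_\al\na_\be(\mu|_\Si)={}^g\na^2_{\al\be}\mu|_\Si=\psi|_\Si\,h_{\al\be}$; equivalently the tangential gradient $W:=\grad_b(\mu|_\Si)$ obeys $\na_\al W^\be=\psi\,\delta_\al^{\be}$ on $\Si$. Taking the mixed tangential--normal component and using $g_{\al\al'}|_\Si=0$ gives $0={}^g\na^2_{\al\al'}\mu|_\Si$; evaluating ${}^g\na^2_{\al\al'}\mu=\pa_\al\pa_{\al'}\mu-\Gamma^m_{\al\al'}\pa_m\mu$ at $u=0$ in adapted normal coordinates, using $\pa_{\al'}\mu|_\Si=0$ (hence also $\pa_\al\pa_{\al'}\mu|_\Si=0$), the identity $g_{\al\be,\al'}|_\Si=-2L_{\al\be\al'}$ (which gives $\Gamma^\ga_{\al\al'}|_\Si=-L_\al{}^\ga{}_{\al'}$), and $\Gamma^{\be'}_{\al\al'}\pa_{\be'}\mu|_\Si=0$, this reduces to $L_{\al\ga\al'}W^\ga=0$ on $\Si$. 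Since $W=-e^{-\om|_\Si}\grad_b\om$ is a nonvanishing multiple of $\grad_b\om$ wherever $\grad_b\om\ne0$, it suffices to prove the lemma with $\grad_b\om$ replaced by $W$, and the case $s=0$ is precisely $L_{\al\ga\al'}W^\ga=0$.

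Finally I would prove $C(s)$ by induction on $s$: the contraction of $W$ into any $s+1$ of the $s+2$ covariant indices of $\na^sL$ vanishes on $\Si$. Assuming $C(s-1)$, write $\na^sL=\na_{\al_1}B$ with $B:=\na^{s-1}L$, whose $s+1$ covariant indices I label $i_1,\dots,i_{s+1}$. If the free index is $\al_1$, apply $\na_{\al_1}$ to $W^{i_1}\cdots W^{i_{s+1}}B_{i_1\cdots i_{s+1}}{}^{\al'}=0$ (which holds by $C(s-1)$); each term in which $\na_{\al_1}$ hits a factor $W^{i_j}$ equals $\psi$ times a contraction of $W$ into $s$ of the $s+1$ indices of $B$, hence vanishes by $C(s-1)$, so $W^{i_1}\cdots W^{i_{s+1}}\na_{\al_1}B_{i_1\cdots i_{s+1}}{}^{\al'}=0$. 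If instead $\al_1$ is contracted and some $i_0$ is the free index, apply $W^{\al_1}\na_{\al_1}$ to $W^{i_1}\cdots W^{i_s}B_{i_1\cdots i_s i_0}{}^{\al'}=0$; each term in which the derivative hits a factor $W^{i_j}$ produces $W^{\al_1}\na_{\al_1}W^{i_j}=\psi W^{i_j}$, i.e.\ $\psi$ times the same vanishing $C(s-1)$ quantity, so $W^{\al_1}W^{i_1}\cdots W^{i_s}\na_{\al_1}B_{i_1\cdots i_s i_0}{}^{\al'}=0$. These two cases exhaust the $s+2$ choices of free index, completing the induction. The only delicate part is the adapted-coordinate computation of $\Gamma^\ga_{\al\al'}$ on $\Si$ in the third step; everything else is bookkeeping, the structural point being that $\na W$ is pure trace, so differentiating a $C(s-1)$ identity in a $W$-compatible direction can only regenerate lower-order $C(s-1)$ identities.
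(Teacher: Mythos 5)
Your proposal is correct and follows essentially the same route as the paper: minimality for both metrics kills $\om^{\al'}$ along $\Si$, the Einstein condition (via the conformal transformation of the Schouten tensor, which you repackage as the concircular equation ${}^g\na^2(e^{-\om})=\psi\, g$) supplies the Hessian constraints whose mixed $\al\al'$ component together with $\Ga^\ga_{\al\al'}|_\Si=-L_\al{}^\ga{}_{\al'}$ gives the $s=0$ case, and the induction on $s$ exploits that the tangential Hessian is pure trace. The substitution $\mu=e^{-\om}$ is only a cosmetic streamlining—it removes the quadratic terms $\om_\al\om_\ga$ that the paper's induction also absorbs via the induction hypothesis—so the two arguments are structurally identical.
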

\begin{proof}
Recall that the mean curvature vector transforms by
$e^{2\om}\widehat{H}^{\al'} = H^{\al'} -\om^{\al'}$.  Since $\Si$ is 
minimal for both $g$ and $\gh$, it follows that $\om^{\al'}=0$ on $\Si$.    

The conformal transformation law for the Schouten tensor reads
\begin{equation}\label{Ptrans}
\widehat{\mathsf{P}}_{ij}=\mathsf{P}_{ij}-\om_{ij} +\om_i\om_j -\tfrac12
\om_k \om^k g_{ij}, 
\end{equation}
where $\om_{ij}={}^g\na^2_{ij}\om$.  
Take the $\al\al'$ component of~\eqref{Ptrans} on $\Si$.  Since
$\widehat{\mathsf{P}}_{ij}$
and $\mathsf{P}_{ij}$ are both multiples of $g_{ij}$ and $\om_{\al'} =0$,
this component reduces to simply $\om_{\al\al'}=0$.  But
$\om_{\al\al'}=\pa_{\al}\om_{\al'} -\Ga_{\al\al'}^i \om_i$.  Now 
$\pa_{\al}\om_{\al'}=0$ since $\om_{\al'}=0$ on $\Si$, and
$\Ga_{\al\al'}^{\be}=\frac12 g^{\be\ga}\pa_{\al'}g_{\al\ga}=-L_{\al\al'}^\be$.
Therefore  $L_{\al\al'}^\be\om_\be=0$, which is equivalent to
$L^{\al'}_{\al\be}\om^\be=0$.  This proves the case $s=0$.

We proceed by induction on $s$.  First consider a case with $s=1$, which is
indicative of the general argument.  Write
\begin{equation}\label{r=1}
L^{\al'}_{\al\be;\ga}\om^\al\om^\ga
=\left(L^{\al'}_{\al\be}\om^\al\om^\ga\right){}_{;\ga}
-L^{\al'}_{\al\be}\om^\al{}_\ga\om^\ga-L^{\al'}_{\al\be}\om^\al\om^\ga{}_\ga.
\end{equation}
The first and last terms on the right-hand side vanish by the case
$s=0$.  For the middle term, take the $\al\ga$ component of
\eqref{Ptrans} and use as 
above that $\mathsf{P}_{ij}$ and $\widehat{\mathsf{P}}_{ij}$ are multiples
of $g_{ij}$ to obtain $\om_{\al\ga}=\om_\al\om_\ga + fg_{\al\ga}$ for some
function $f$ on $\Si$.  In this equation, $\om_{\al\ga}={}^g\na^2_{\al\ga}\om$, whereas in
\eqref{r=1}, $\om^\al{}_\ga$ refers to the 
covariant derivative of $\grad_b\om$ with 
respect to the induced connection.  But these agree since $\om_{\al'}=0$ on
$\Si$.  Substituting 
$\om^\al{}_{\ga}=\om^\al\om_\ga + f\delta^\al{}_{\ga}$ into~\eqref{r=1} 
shows that the right-hand side of~\eqref{r=1} vanishes by the case $s=0$ as
desired.    
The argument for the other $s=1$ case, where $\om^\ga$ is replaced by 
$\om^\be$ in the left-hand side of~\eqref{r=1}, is easier:  after factoring  
out the covariant derivative index $\ga$, all terms on the right-hand side
vanish by the $s=0$ case.

The general inductive
step going from $s$ to $s+1$ is similar.  One factors out the last
covariant derivative index on $\na^{s+1}L$ at the expense of terms
involving $\na^2\om$.  If the last covariant derivative index is the free
index, the induction hypothesis immediately implies the result.  If the
last covariant derivative index is one of the contracted indices,
substitute the tangential component of~\eqref{Ptrans} for all the second 
derivative terms.  It is easily seen that all terms vanish by the 
induction hypothesis. 
\end{proof}

If $g$ is Einstein with 
$\Ric(g)=\la(n-1)g$, then the canonical ambient metric is given by 
\eqref{ambientmetric} with $\sg_\rho = (1+\tfrac12 \la\rho)^2g$.  If $\Si$
is minimal, then $\Yb=\R_+\times \Si\times (-\ep,0]$ is a  
homogeneous minimal extension of 
$\R_+\times\Si$, which we call the {\it canonical extension}.  More general
homogeneous extensions are written as a graph
$\Yb=\{(t,x^\al,u^{\al'},\rho):t\in \R_+, u^{\al'}=u^{\al'}(x,\rho)\}$;
the canonical extension is given by $u^{\al'}=0$.  For comparison, the  
graphing function in the 
Poincar\'e metric picture is $u^{\al'}=u^{\al'}(x,-r^2/2)$.  In particular,
the indeterminacy in $u^{\al'}$ appears at order $\rho^{k/2+1}$.  Let 
$\htt=\iota^*\gt$ be the induced metric, where $\iota:\Yb\rightarrow\cGt$. 
We observed above that $\htt$ has Lorentzian signature for $\rho$ small.
This will be explicit in~\eqref{inducedmetric} below.  Write  
\[
g=g_{\al\be}dx^\al dx^\be+2g_{\al\al'}dx^\al du^{\al'}
+g_{\al'\be'}du^{\al'}du^{\be'}, 
\]
where all $g_{ij}$ are functions of $(x,u)$ and $g_{\al\al'}=0$ at $u=0$.
Since $g$ is Einstein, $\sg_\rho = (1+\tfrac12 \la\rho)^2g$.   
Now
\[
\iota^*\sg_{\rho}=k_{\al\be}dx^\al dx^\be +2k_{\al\infty}dx^\al d\rho 
+k_{\infty\infty}d\rho^2,
\]
where
\begin{align*}
k_{\al\be}&=(1+\tfrac12 \la\rho)^2
\big(g_{\al\be}+2g_{\al'(\al}u^{\al'}{}_{,\be)} +
g_{\al'\be'}u^{\al'}{}_{,\al}u^{\be'}{}_{,\be}\big) \, ,\\
k_{\al\infty}&=(1+\tfrac12 \la\rho)^2 \big(g_{\al\al'}u^{\al'}{}_{,\rho}
+g_{\al'\be'}u^{\al'}{}_{,\al}u^{\be'}{}_{,\rho}\big)
\, ,
\\
k_{\infty\infty}&=(1+\tfrac12
\la\rho^2)\,g_{\al'\be'}u^{\al'}{}_{,\rho}u^{\be'}{}_{,\rho}\, .   
\end{align*}
The coefficients $k_{\al\be}$, $k_{\al\infty}$, and $k_{\infty\infty}$ are
functions of $(x,\rho)$, and the $g_{ij}$ are evaluated at
$(x,u(x,\rho))$.  
It follows that 
\begin{equation}\label{inducedmetric}
\left(\htt_{\cI\cJ}\right)=
\begin{pmatrix}
  2\rho&0&t\\
  0&t^2k_{\al\be}&t^2k_{\al\infty}\\
  t&t^2k_{\al\infty}&t^2k_{\infty\infty}
\end{pmatrix}
\end{equation}
in $(t,x,\rho)$ coordinates.  We use $0$ for the $t$ direction and $\infty$ 
for the $\rho$ direction. Indices $\cI$, $\cJ$, $\cK$, $\cL$ run over   
$0,\alpha,\infty$, i.e. $0,1,\ldots, k$, $\infty$, and in the following,
indices $I$, $J$, $K$ run over $0,i,\infty$, i.e. $0,1,\ldots, n$,
$\infty$.

Set $e_{\cI}= \pa_\cI +u^{\al'}{}_{,\cI}\pa_{\al'}$, so that $\{e_\cI\}$ is
a basis for $T\Yb$.  Let ${}^\top$, ${}^\perp$ denote the orthogonal
projections with respect to $\gt$ of 
$T\cGt|_{\Yb}$ onto $T\Yb$, $N\Yb$, resp.  Set $e_{\al'}=\pa_{\al'}^\perp$.
Let $\nb$ denote the induced connections on $T\Yb$ and $N\Yb$ and let
$\Gb_{\cI\cJ}^\cK$,  $\Gb_{\cI\be'}^{\al'}$ denote the Christoffel symbols
of $\nb$ on $T\Yb$, $N\Yb$, resp.  Let $\Lb^{\al'}_{\cI\cJ}$ denote  
the components of the second fundamental form of $\Yb$ with respect to the
frames $e_\cI$, $e_{\al'}$.  The defining relations for these quantities
are: 
\[
\left(\nt_{e_\cI}e_\cJ\right)^\top =
\nb_{e_\cI}e_\cJ=\Gb^{\cK}_{\cI\cJ}e_{\cK},\quad
\left(\nt_{e_\cI}e_\cJ\right)^\perp = \Lb^{\al'}_{\cI\cJ}e_{\al'},\quad
\left(\nt_{e_\cI}e_{\be'}\right)^\perp = \nb_{e_\cI}e_{\be'} =
\Gb^{\al'}_{\cI\be'}e_{\al'}.
\]

The Christoffel symbols $\Gt_{IJ}^K$ of $\gt$ with respect to a coordinate
frame $\{\pa_t,\pa_i,\pa_\rho\}$ are given by (3.16) of
\cite{FG2}.  In the case that $g$ is Einstein and
$\sg_\rho =(1+\tfrac12 \la\rho)^2g$, these become

\begin{gather}\label{Christgt}
\begin{gathered}
  \Gt_{IJ}^0=
  \begin{pmatrix}
    0&0&0\\
    0&-\tfrac12 t\la(1+\tfrac12 \la\rho)g_{ij}&0\\
    0&0&0
  \end{pmatrix}\, ,\\
  \Gt_{IJ}^k=
  \begin{pmatrix}
    0&t^{-1}\de_j^k&0\\
   t^{-1}\de_i^k&\Ga^k_{ij}&\frac{\la}{2}(1+\tfrac12
   \la\rho)^{-1}\de_i^k\\ 
    0&\frac{\la}{2}(1+\tfrac12 \la\rho)^{-1}\de_j^k&0 
  \end{pmatrix}\, ,\\
  \Gt_{IJ}^\infty=
  \begin{pmatrix}
    0&0&t^{-1}\\
    0&(-1+\tfrac14 \la^2\rho^2)g_{ij}&0\\
    t^{-1}&0&0
  \end{pmatrix}.\\
\end{gathered}
\end{gather}
In the middle equation, $\Ga_{ij}^k$ denotes the Christoffel symbol of $g$
on $M$.  

If $u^{\al'}=0$, then $e_\cI = \pa_\cI$ and $e_{\al'}=\pa_{\al'}$, so
$\Gb_{\cI\cJ}^\cK = \Gt_{\cI\cJ}^\cK$,
$\Lb^{\al'}_{\cI\cJ}= \Gat^{\al'}_{\cI\cJ}$, and
$\Gb_{\cI\al'}^{\be'} = \Gt_{\cI\al'}^{\be'}$.   
Equation~\eqref{Christgt} gives
\begin{equation}\label{L}
\Lb^{\al'}_{\cI\cJ}=
\begin{pmatrix}
  0&0&0\\
  0&\Ga^{\al'}_{\al\be}&0\\
  0&0&0
\end{pmatrix}=
\begin{pmatrix}
  0&0&0\\
  0&L^{\al'}_{\al\be}&0\\
  0&0&0
\end{pmatrix},
\end{equation}
where $\Ga^{\al'}_{\al\be}$ are the Christoffel symbols for the metric $g$
on $M$, which is independent of $\rho$ (and $t$), and 
$L^{\al'}_{\al\be}=\Ga^{\al'}_{\al\be}$ is the second fundamental form of 
$\Si\subset M$.

Equation~\eqref{L} evaluates $\Lb$ everywhere on $\Yb$ for the canonical
extension $u^{\al'}=0$.  Next we derive the form of the covariant
derivatives of $\Lb$.  We begin with components involving a $0$ index.
These relations depend only on the homogeneity, so they hold for any
homogeneous extension $\Yb$ of $\Si$.   
Recall that $T=d/ds|_{s=1}\delta_s$.  This is a 
vector field on $\cGt$ which is tangent to $\Yb$.  The
components of $T$ are given by $T^I = t\delta^I_0$.  It is shown in
\cite[Proposition 3.4]{FG2} that $\nt T = Id$ is the identity endomorphism
of $T\cGt$ for any ambient metric of the form~\eqref{ambientmetric}, with
$\sg_\rho$ an arbitrary 1-parameter family of metrics on $M$.  
\begin{proposition}\label{0components}
  The covariant derivatives of the second fundamental form of any
  homogeneous $\Yb$ satisfy
  \begin{enumerate}
  \item
    $T^\cJ\Lb^{\al'}_{\cI\cJ;\cK_1\cdots\cK_r}
    =-\sum_{s=1}^r \Lb^{\al'}_{\cI\cK_s;\cK_1\cdots\wh{\cK_s}\cdots\cK_r},$ 
  \item
    $T^\cL\Lb^{\al'}_{\cI\cJ;\cK_1\cdots\cK_s\cL\cK_{s+1}\cdots\cK_r}
    =-(s+1)\Lb^{\al'}_{\cI\cJ;\cK_1\cdots\cK_r}
    -\sum_{t=s+1}^r\Lb^{\al'}_{\cI\cJ;\cK_1\cdots\cK_s\cK_t\cK_{s+1}\cdots\wh{\cK_t}\cdots\cK_r}$. 
\end{enumerate}
\end{proposition}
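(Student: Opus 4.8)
The plan is to derive both identities from two inputs alone: the homogeneity of $\Yb$ under the dilations $\de_s$, and the relation $\nt T=\mathrm{Id}$ from \cite[Proposition 3.4]{FG2}, which holds for any ambient metric of the form~\eqref{ambientmetric}. From $\nt T=\mathrm{Id}$ I would first extract that, since $T$ is tangent to $\Yb$, projection onto $T\Yb$ gives $\nb_{\cK}T^{\cL}=\de_{\cK}^{\cL}$, with all higher covariant derivatives of $T$ vanishing (the identity endomorphism being parallel), and that $[T,X]=\nt_TX-\nt_XT=\nb_TX-X$ for $X\in\Gamma(T\Yb)$, while for a section $\nu$ of $N\Yb$ one has $[T,\nu]=\nt_T\nu-\nt_\nu T=\nt_T\nu-\nu$; since the flow of $T$ consists of the dilations $\de_s$ and these preserve $\Yb$ together with the subbundle $N\Yb$, this last relation says that the Lie derivative of $\nu$ along $T$ equals $\nb_T\nu-\nu$.

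Next I would establish two base identities. Case (1) with $r=0$ is immediate: for $X\in T\Yb$, $\Lb(X,T)=(\nt_XT)^{\perp}=X^{\perp}=0$, i.e.\ $\Lb^{\al'}_{\cI\cJ}T^{\cJ}=0$. For case (2) with $r=s$, I would use that, because $\de_s^*\gt=s^2\gt$ and $\Yb$ is $\de_s$-invariant, the connection $\nt$, the orthogonal projections ${}^{\top}$ and ${}^{\perp}$, the induced connections on $T\Yb$ and $N\Yb$, and hence $\Lb$ and each of its iterated covariant derivatives $\nb^s\Lb$ are all $\de_s$-invariant; so the Lie derivative of $\nb^s\Lb$ along $T$ vanishes. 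Feeding this into the routine identity that, for a section $S$ of $\otimes^mT^*\Yb\otimes N\Yb$, the Lie derivative of $S$ along $T$ equals $\nb_TS+(m-1)S$ — each of the $m$ tangential slots contributing $+1$ via $[T,X]=\nb_TX-X$, and the $N\Yb$-value contributing $-1$ via the relation above — and taking $m=s+2$, I would conclude $\nb_T(\nb^s\Lb)=-(s+1)\nb^s\Lb$, which is precisely case (2) with $r=s$.

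The general cases then follow by repeated differentiation. For (1), I would apply $\nb_{\cK_1},\dots,\nb_{\cK_r}$ to $\Lb^{\al'}_{\cI\cJ}T^{\cJ}=0$; by the Leibniz rule, and because $\nb T$ is parallel, at most one derivative can productively hit the factor $T^{\cJ}$, and when $\nb_{\cK_s}$ does it replaces $T^{\cJ}$ by $\de^{\cJ}_{\cK_s}$, contracting $\cK_s$ into $\Lb$; summing over which derivative hits $T$ gives (1). For (2) with $r>s$, I would apply $\nb_{\cK_{s+1}},\dots,\nb_{\cK_r}$ to the identity $T^{\cL}\Lb^{\al'}_{\cI\cJ;\cK_1\cdots\cK_s\cL}=-(s+1)\Lb^{\al'}_{\cI\cJ;\cK_1\cdots\cK_s}$: each derivative either falls on $\Lb$, appending a new outermost derivative index, or falls once on $T^{\cL}$, in which case $\nb_{\cK_t}T^{\cL}=\de^{\cL}_{\cK_t}$ inserts $\cK_t$ into the slot vacated by $\cL$ and yields the term $\Lb^{\al'}_{\cI\cJ;\cK_1\cdots\cK_s\cK_t\cK_{s+1}\cdots\wh{\cK_t}\cdots\cK_r}$, while the coefficient $-(s+1)$ is carried along; collecting all terms gives (2).

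I expect the main obstacle to be purely the bookkeeping in case (2): one must verify that the repeated differentiation reproduces exactly the stated sum, with $\cL$ occupying slot $s+1$ and the surviving indices in the prescribed order. A reassuring point is that these are precisely the orderings produced by the differentiation, so no commutation of covariant derivatives — and hence no curvature term — enters; and even if a reordering were wanted, it would be harmless because $T$ contracted into the curvature of the induced connection on $T\Yb$ or $N\Yb$ vanishes, by the Gauss and Ricci equations together with $\Lb(\cdot,T)=0$ and $\Rt(\cdot,\cdot)T=0$ — the latter being immediate from $\nt T=\mathrm{Id}$ and torsion-freeness.
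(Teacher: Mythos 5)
Your proof is correct and follows essentially the same route as the paper: the base identity $T^\cJ\Lb^{\al'}_{\cI\cJ}=0$ coming from $\nt T=\mathrm{Id}$ and tangency of $T$, the special case $\nb_T(\nb^s\Lb)=-(s+1)\nb^s\Lb$ deduced from dilation-invariance (homogeneity of degree $0$) of $\nb^s\Lb$ together with the Lie-derivative versus covariant-derivative comparison, and then successive differentiation using $T^{\cI}{}_{;\cJ}=\de^{\cI}{}_{\cJ}$ to generate (1) and (2). The only (harmless) difference is that you carry out the Lie-derivative step intrinsically on $\Yb$, whereas the paper applies the identity $\nt_X U=\cL_X U+(\nt X).U$ to a homogeneous ambient extension of $\nb^r\Lb$ and then restricts to $\Yb$.
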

\noindent
Condition (1) in the case $r=0$ is interpreted as the statement
$T^\cJ\Lb^{\al'}_{\cI\cJ}=0$, or equivalently $\Lb^{\al'}_{\cI 0}=0$.  Note 
that the case $s=r$ in (2) reduces to
\begin{equation}\label{s=r}
T^\cL\Lb^{\al'}_{\cI\cJ;\cK_1\cdots\cK_r\cL}
=-(r+1)\Lb^{\al'}_{\cI\cJ;\cK_1\cdots\cK_r}.  
\end{equation}
\begin{proof}
If $V$ is a vector field tangent to $\Yb$, then $\nt_V T=V$ is also tangent 
to $\Yb$.  So $\Lb(T,V)=0$.  In terms of indices, this is written 
$T^{\cI}\Lb^{\al'}_{\cI \cJ}=0$.  Now differentiate successively using 
$T^{\cI}{}_{;\cJ}=\delta^\cI{}_\cJ$ to obtain (1). 

For (2), we first prove the special case~\eqref{s=r}.  Recall the identity
\begin{equation}\label{Liederiv}
\na_XU = \cL_XU +(\na X).U
\end{equation}
for a torsion free connection $\na$ on a manifold, where $X$ is a vector
field and $U$ is a tensor field, $\cL_X$ denotes the Lie derivative, and 
$(\na X).U$ denotes the algebraic action of the endomorphism $\na X$ on
$U$.  Apply this on $\cGt$, taking $\na=\nt$, $X=T$, and $U$ equal to an
extension of $\nb^r\Lb$ to a tensor field on $\cGt$ near $\Yb$.  Since
$\Lb$ is homogeneous of degree $0$ with respect to the dilations
$\delta_s$, we can choose $U$ to have the same homogeneity so that
$\cL_TU=0$.  The identity endomorphism acts on a tensor 
covariant in $\ell$ indices and contravariant in $m$ indices by
multiplication by $m-\ell$.  So~\eqref{Liederiv} becomes
$\nt_TU = -(r+1)U$.  This implies that the restriction of $\nt_TU$ to $\Yb$
is also a section of $\otimes^{r+2}T^*\Yb\otimes N\Yb$, so that
$\nt_TU|_{\Yb}=\nb_T(\nb^r\Lb)$.  This proves~\eqref{s=r}.  

Now replace $r$ by $s$ in~\eqref{s=r} and differentiate $r-s$ more
times using again $T^{\cI}{}_{;\cJ}=\delta^\cI{}_\cJ$ to obtain (2).   
\end{proof}

Now we restrict to the case $\Yb= \R_+\times \Si\times (-\ep,0]$, which 
is the canonical extension when $g$ is Einstein and $\Si$
is minimal.  At each point $p\in\Yb$, we can identify $N_p\Yb$ with
$N_{\pi_\Si(p)}\Si$, where $\pi_\Si:\Yb\rightarrow \Si$ is the projection
induced by this product decomposition.  
Construct tensors on $\Si$ from the covariant derivatives of
$\Lb$ as follows.  Choose an order $r\geq 0$ of covariant  
differentiation.  Divide the set of symbols $\cI\cJ\cK_1\cdots \cK_r$
into three disjoint subsets $\cS_0$, $\cS_\Si$ and $\cS_\infty$
of cardinalities $s_0$, $s_\Si$, $s_\infty$, resp. Set the 
indices in $\cS_0$ equal to $0$, those in $\cS_\infty$ equal to $\infty$,
and let those in $\cS_\Si$ correspond to $\Si$ in the decomposition
$\Yb = \R_+\times \Si\times (-\ep,0]$. (In local coordinates, the indices
in $\cS_\Si$ vary between $1$ and $k$.)  Evaluate the resulting component
$\Lb^{\al'}_{\cI\cJ;\cK_1\cdots\cK_r}$
at $\rho =0$ and $t=1$.  This defines a tensor on $\Si$ which is a section
of $\otimes^{s_{\Si}}T^*\Si\otimes N\Si$, which we denote by
$\Lb^{(r)}_{\cS_0,\cS_\Si,\cS_\infty}$.  

\begin{proposition}\label{Scomponents}
The tensor $\Lb^{(r)}_{\cS_0,\cS_\Si,\cS_\infty}$ is a linear combination
of tensor products of an iterated covariant derivative of the second 
fundamental form $L$ of $\Si$ with some power of the induced metric
$g|_{T\Si}$, with some ordering of the covariant indices.    
\end{proposition}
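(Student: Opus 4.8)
The plan is to prove Proposition~\ref{Scomponents} by induction on the order $r$ of covariant differentiation, establishing along the way a slightly stronger statement that tracks the dependence on the homogeneity variable $t$ and on $\rho$. Precisely, I will show that for every choice $\cI,\cJ,\cK_1,\dots,\cK_r\in\{0,\al,\infty\}$, the component $\Lb^{\al'}_{\cI\cJ;\cK_1\cdots\cK_r}$, viewed as a function on $\Yb=\R_+\times\Si\times(-\ep,0]$, is a finite sum $\sum_a\phi_a(t,\rho)\,W_a$ in which each $\phi_a$ is smooth and each $W_a$ is a $t$- and $\rho$-independent tensor field on $\Si$ of the asserted shape: a universal linear combination of tensor products of an iterated covariant derivative $\na^jL$ with a power $h^{\otimes m}$ of the induced metric $h=g|_{T\Si}$, with free indices in the slots of $\cI\cJ\cK_1\cdots\cK_r$ carrying $\Si$-indices. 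Evaluating at $\rho=0$, $t=1$ turns the $\phi_a$ into constants and yields the proposition.

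The base case $r=0$ is exactly~\eqref{L}. For the inductive step I first dispose of any component with at least one $0$-index by applying Proposition~\ref{0components}: part~(1) when the $0$ occupies a base slot, and part~(2), including the special case~\eqref{s=r}, when it occupies a covariant-derivative slot. Each such application expresses the component as a linear combination, with coefficients that are powers of $t^{-1}$, of components of order $r-1$, to which the induction hypothesis applies termwise; the key point is that both parts of Proposition~\ref{0components} only permute and relabel indices and never introduce a contraction. It remains to treat a component with no $0$-index. Peeling off the outermost covariant derivative, I write it as $\nb_{e_{\cK_1}}\bigl((\nb^{r-1}\Lb)_{\cI\cJ;\cK_2\cdots\cK_r}\bigr)$ with $\cK_1$ either an $\Si$-index or $\infty$, expand $\nb_{e_{\cK_1}}$ in the frame $\{e_\cI\}\cup\{e_{\al'}\}$, and substitute the induction hypothesis for the order-$(r-1)$ component.

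Everything then rests on the explicit form of the connection coefficients of $\nb$ on the canonical extension $u^{\al'}=0$, obtained from~\eqref{Christgt} and~\eqref{L}: (i) with all indices tangent to $\Si$, the $\Gb^{\ga}_{\al\be}$ are the Christoffel symbols of $h$, the $\Gb^{\al'}_{\al\be'}$ are the induced connection coefficients of $N\Si$, and $\Gb^{\al'}_{\al\be}=L^{\al'}_{\al\be}$; (ii) $\Gb^0_{\al\be}$ and $\Gb^\infty_{\al\be}$ are smooth functions of $(t,\rho)$ times $h_{\al\be}$; (iii) the only other coefficients that occur, the symmetric coefficient $\Gb^{\ga}_{\al\infty}$ and $\Gb^{\al'}_{\infty\be'}$, are smooth functions of $(t,\rho)$ times a Kronecker delta, while $\Gb^{\cK}_{\infty\infty}$, $\Gb^0_{\infty\be}$ and $\Gb^\infty_{\infty\be}$ vanish; moreover $e_\al=\pa_\al$ and $e_\infty=\pa_\rho$ act on the $\rho$-independent $W_a$ only through their scalar coefficients. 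Hence, when $\cK_1$ is an $\Si$-index, the contributions of $e_{\cK_1}$, of the tangential Christoffel symbols, and of $\Gb^{\al'}_{\cK_1\be'}$ assemble, by~(i), into the $\Si$-covariant derivative $\na_{\cK_1}W_a$, which is again of the required shape because $\na h=0$; the contributions involving $\Gb^0$ and $\Gb^\infty$ merely tensor an extra factor $h$ onto an order-$(r-1)$ component, after removing by Proposition~\ref{0components} the $0$-index they create, thereby only raising the power $h^{\otimes m}$; and when $\cK_1=\infty$, every term is, by~(iii), a smooth $(t,\rho)$-multiple or a $\pa_\rho$-derivative of an order-$(r-1)$ component. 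In every case the result is again of the form $\sum_a\phi_a(t,\rho)\,W_a$, which closes the induction.

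The main obstacle is organizational rather than computational: one must choose the strengthened inductive statement so that the auxiliary directions $\pa_t$ and $\pa_\rho$ and the $0$-components enter only through scalar coefficients and through tensoring with $h$, never through a genuine contraction of the building blocks---this is exactly what keeps the $W_a$ confined to iterated covariant derivatives of $L$ (not contracted, and involving no background curvature) times powers of $h$. The only honest calculation is the verification of~(i)--(iii) from~\eqref{Christgt}, together with the observations, immediate in the adapted coordinates, that $\Gb^{\ga}_{\al\be}$ and $\Gb^{\al'}_{\al\be'}$ restrict on $\Si=\{u^{\al'}=0\}$ to the Levi-Civita connection of $h$ and the induced connection on $N\Si$; this is routine.
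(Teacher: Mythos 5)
Your proposal is correct and takes essentially the same route as the paper: the same induction on $r$ with a strengthened statement allowing smooth $\rho$-dependent coefficients, removal of $0$-indices via Proposition~\ref{0components}, and a case analysis on the outermost derivative index using the explicit Christoffel symbols~\eqref{Christgt}, in which the tangential terms assemble into a covariant derivative of the building blocks and the $g_{\al\be}$-proportional symbols only tensor on extra factors of $h$. The remaining differences are cosmetic (tracking the $t$-dependence explicitly, clearing $0$-indices before rather than within the induction hypothesis, and phrasing the tangential assembly via the induced connection on $\Si$ rather than the ambient Christoffel symbols evaluated along $\Si$, which coincide there).
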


\noindent
We provide a clarification of the statement.  For any order $s\geq 0$ of 
covariant differentiation and any $m\geq 0$, and any ordering of the
indices, we can form a tensor on $\Si$
of the form $\na^sL\otimes (\otimes^m g|_{T\Si})$.  The statement is that for 
any $\cS_0$, $\cS_\Si$, $\cS_\infty$, the tensor
$\Lb^{(r)}_{\cS_0,\cS_\Si,\cS_\infty}$ is a linear combination of tensors
of this form.  Since the ranks must agree, only terms for which  
$s+2+2m=s_\Si$ can appear in the linear combination.   
\begin{corollary}\label{Lvanishing}
$\Lb^{(r)}_{\cS_0,\cS_\Si,\cS_\infty}=0$ if $s_\Si=0$ or $1$.  
\end{corollary}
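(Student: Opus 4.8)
The plan is to obtain Corollary~\ref{Lvanishing} as an immediate consequence of Proposition~\ref{Scomponents}, the only extra ingredient being a bookkeeping remark about tensor ranks. First I would recall what Proposition~\ref{Scomponents} gives: for every partition of the index set $\cI\cJ\cK_1\cdots\cK_r$ into $\cS_0$, $\cS_\Si$, $\cS_\infty$, the tensor $\Lb^{(r)}_{\cS_0,\cS_\Si,\cS_\infty}$ on $\Si$ is a finite linear combination of model tensors of the form $\na^sL\otimes(\otimes^m g|_{T\Si})$, with various $s,m\geq 0$ and various orderings of the covariant indices. By its very construction $\Lb^{(r)}_{\cS_0,\cS_\Si,\cS_\infty}$ is a section of $\otimes^{s_\Si}T^*\Si\otimes N\Si$, so it has exactly $s_\Si$ covariant (i.e.\ $T^*\Si$) slots.

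Next I would count the covariant slots of a model term $\na^sL\otimes(\otimes^m g|_{T\Si})$: the second fundamental form $L=L^{\al'}_{\al\be}$ contributes two such slots, the $s$ covariant derivatives contribute $s$ more, and the $m$ factors $g|_{T\Si}$ contribute $2m$, for a total of $s+2+2m$. For a model term to actually occur in the linear combination, its rank must match that of $\Lb^{(r)}_{\cS_0,\cS_\Si,\cS_\infty}$, so only terms with $s+2+2m=s_\Si$ can appear, exactly as noted in the clarification following Proposition~\ref{Scomponents}. Since $s,m\geq 0$ force $s+2+2m\geq 2$, no admissible model term exists when $s_\Si=0$ or $s_\Si=1$; in those cases the linear combination is empty and $\Lb^{(r)}_{\cS_0,\cS_\Si,\cS_\infty}=0$, which is precisely the corollary.

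As this is a one-line deduction, there is no genuine obstacle here: all the substance lives in Proposition~\ref{Scomponents} itself (the elimination of the $0$-indices via Proposition~\ref{0components}, together with the identification of the $\infty$- and $\Si$-components using~\eqref{L} and the Christoffel symbols~\eqref{Christgt}). If one wanted to prove the corollary without invoking Proposition~\ref{Scomponents}, one could argue directly by induction on $r$: Proposition~\ref{0components} lets one trade any $0$ index for components of strictly lower differential order without creating new $\Si$ indices, reducing to $s_0=0$, and one would then have to check that components with all indices among the $\infty$- and $\Si$-directions and at most one $\Si$-index still vanish — and that last step, where the $\infty$-indices must be controlled via the $\Gb$'s coming from~\eqref{Christgt}, is where the real work would lie. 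But routing through Proposition~\ref{Scomponents} makes the proof essentially free, so that is the approach I would take.
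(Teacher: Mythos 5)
Your proposal is correct and is exactly the paper's argument: Corollary~\ref{Lvanishing} follows immediately from Proposition~\ref{Scomponents} because any admissible model term $\na^sL\otimes(\otimes^m g|_{T\Si})$ has $s_\Si=s+2+2m\geq 2$, so no such term exists when $s_\Si=0$ or $1$.
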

\begin{proof}
$s_\Si=s+2+2m\geq 2$.
\end{proof}

\noindent
{\it Proof of Proposition~\ref{Scomponents}.}
We prove a stronger statement:  Denote  
by $\Lb^{(r)}_{\cS_0,\cS_\Si,\cS_\infty}(\rho)$ the 1-parameter family of
tensors on $\Si$ constructed exactly the same way, but not restricting to
$\rho =0$.  So
$\Lb^{(r)}_{\cS_0,\cS_\Si,\cS_\infty}=\Lb^{(r)}_{\cS_0,\cS_\Si,\cS_\infty}(0)$.  
We prove by induction on $r\geq 0$ that for each choice of $\cS_0$,
$\cS_\Si$, $\cS_\infty$ with $s_0+s_\Si+s_\infty = r+2$, the tensor  
$\Lb^{(r)}_{\cS_0,\cS_\Si,\cS_\infty}(\rho)$  is a linear combination of
tensors of the form $f(\rho)\na^sL\otimes (\otimes^m g|_{T\Si})$ with some 
ordering of the indices of each term, where $f(\rho)$ is a smooth function
near $\rho =0$ which can vary from term to term.  The desired statement
follows upon setting $\rho =0$.

The case $r=0$ is clear from~\eqref{L}:  the only nonzero component of
$\Lb$ is $\Lb_{\cI\cJ}^{\al'} = L_{\cI\cJ}^{\al'}$.  For this component,
$s=m=0$ and $f(\rho)=1$.  

Assume the induction statement is true for $r$.  Consider a tensor
$\Lb^{(r+1)}_{\cS_0,\cS_\Si,\cS_\infty}(\rho)$, whose components we write 
$\Lb^{\al'}_{\cK_1\cK_2;\cK_3\cdots\cK_{r+2}\cI}$.
Proposition~\ref{0components} shows that a zero index can be removed at the
expense of commuting the remaining indices.  So we can assume that
$\cS_0=\emptyset$.  Write
\begin{multline*}
\Lb^{\al'}_{\cK_1\cK_2;\cK_3\cdots\cK_{r+2}\cI}
=\pa_\cI\Lb^{\al'}_{\cK_1\cK_2;\cK_3\cdots\cK_{r+2}}
-\Gb^\cJ_{\cI\cK_1}\Lb^{\al'}_{\cJ\cK_2;\cK_3\cdots\cK_{r+2}}
-\Gb^\cJ_{\cI\cK_2}\Lb^{\al'}_{\cK_1\cJ;\cK_3\cdots\cK_{r+2}}\\
-\blue{\cdots}
-\Gb^\cJ_{\cI\cK_{r+2}}\Lb^{\al'}_{\cK_1\cK_2;\cK_3\cdots\cK_{r+1}\cJ} 
+\Gb^{\al'}_{\cI\be'}\Lb^{\be'}_{\cK_1\cK_2;\cK_3\cdots\cK_{r+2}}.
\end{multline*}
As noted above, for the Christoffel symbols we have 
$\Gb_{\cI\cK}^{\cJ}=\Gt_{\cI\cK}^{\cJ}$ and
$\Gb_{\cI\be'}^{\al'}=\Gt_{\cI\be'}^{\al'}$, and the $\Gt_{IJ}^K$ are given
by~\eqref{Christgt}.

First consider the case $\cI=\infty$, so $\pa_\cI = \pa_\rho$.  For the
first term on the right-hand side, apply the induction hypothesis to
$\Lb^{\al'}_{\cK_1\cK_2;\cK_3\cdots\cK_{r+2}}$.  The $\pa_\rho$ just hits
the function $f(\rho)$ in each term in the linear combination, so the first
term on the right-hand side has the desired form.  For the last term,
substitute
$\Gb_{\infty\be'}^{\al'}
= \frac{\la}{2}(1+\tfrac12 \la\rho)^{-1}\de_{\be'}^{\al'}$  
and apply the induction hypothesis to see that it also has the desired
form.  Since $\Gt_{\infty\infty}^\cJ=0$ for all $\cJ$, the terms with a
factor $\Gb^\cJ_{\cI\cK_i}$ for which $\cK_i=\infty$ all vanish.  The
only other terms involve $\Gb^\cJ_{\cI\cK_i}$ with $\cK_i\in \cS_\Si$.
For these terms, $\Gb^\cJ_{\cI\cK_i}=0$ unless $1\leq \cJ\leq k$, in which
case
$\Gb^\cJ_{\cI\cK_i} =\frac{\la}{2}(1+\tfrac12
\la\rho)^{-1}\de_{\cK_i}^{\cJ}$. 
So again, the induction hypothesis implies these terms have the desired
form. 

Finally consider the case $\cI\in \cS_\Si$.  Recall that
$\Gb_{\cI\infty}^{\cJ}=0$ unless $1\leq\cJ\leq k$, in which case 
$\Gb_{\cI\infty}^{\cJ}=\frac{\la}{2}(1+\tfrac12\la\rho)^{-1}\de_{\cI}^{\cJ}$.
So the terms on the right-hand side with a factor $\Gb_{\cI\cK_i}^{\cJ}=0$
with $\cK_i=\infty$ all are of the desired form.  If $\cK_i\in \cS_\Si$,
then $\Gb_{\cI\cK_i}^{\cJ}$ is nonvanishing for all $\cJ$.  Since 
$\Gb_{\cI\cK}^0=-\tfrac12 t\la(1+\tfrac12 \la\rho)g_{\cI\cK}$, we can apply
the induction hypothesis to conclude that the terms with $\cJ=0$ are of the 
desired form (recall that $t=1$ in the definition of
$\Lb^{(r+1)}_{\cS_0,\cS_\Si;\cS_\infty}(\rho)$).  Since
$\Gb_{\cI\cK}^\infty=(-1+\tfrac14 \la^2\rho^2)g_{\cI\cK}$, we can likewise
apply the induction hypothesis to obtain the desired form for these terms.
This leaves the terms with $\cK_i\in \cS_\Si$ and $1\leq \cJ\leq k$.  In
this case $\Gb_{\cI\cK}^\cJ =\Ga_{\cI\cK}^\cJ$ is the Christoffel symbol
for $g$ on $M$.  Applying the induction hypothesis again, all of these
terms combine with the first and last term to  
produce the covariant derivative on $M$ applied to each of the terms
$\na^sL\otimes(\otimes^mg|_{T\Si})$ in the linear combination.  This gives 
further terms of the desired form.  
\stopthm

Next consider extensions $\Yb$ defined by nonzero $u^{\al'}$.  
\begin{proposition}\label{canchar}
Let $k\geq 2$ be even.  Let $g$ be Einstein and let $\gt$ be given by
\eqref{ambientmetric} with $\sg_\rho=(1+\tfrac12 \la\rho)^2g$.  If
$\Si\subset M$ is minimal, then $\Yb=\R_+\times \Si\times (-\ep,0]$ is to
infinite order the unique homogeneous minimal extension of $\Si$ satisfying   
\begin{equation}\label{tensorialcondition}
\Lb^{\al'}_{\infty\infty;\underbrace{\scriptstyle{\infty\cdots\infty}}_{k/2-1}}\big|_{\rho=0}=0. 
\end{equation}
\end{proposition}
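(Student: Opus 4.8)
The plan is to show that \eqref{tensorialcondition} pins down the one undetermined coefficient of the minimal extension — the $\rho^{k/2+1}$-coefficient of its graphing function — by computing its contribution to the left-hand side explicitly, in the spirit of Lemma~\ref{Udependence}.

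Existence is immediate: for the canonical extension $\Yb=\R_+\times\Si\times(-\ep,0]$, the left-hand side of \eqref{tensorialcondition} is the tensor $\Lb^{(k/2-1)}_{\cS_0,\cS_\Si,\cS_\infty}$ of Proposition~\ref{Scomponents} with $\cI$, $\cJ$ and all $k/2-1$ differentiation indices set equal to $\infty$, so $\cS_\Si=\emptyset$ and $s_\Si=0$; by Corollary~\ref{Lvanishing} (indeed by the sharper $\rho$-dependent statement in its proof) it vanishes. So the substance is uniqueness. Let $\Yb_U$ be any homogeneous minimal extension of $\Si$, written as a graph $u^{\al'}=u^{\al'}(x,\rho)$. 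Since $g$ is Einstein and $\Si$ is minimal, $u^{\al'}\equiv 0$ is itself a minimal extension, so by Proposition~\ref{U} transported to the ambient picture via Proposition~\ref{minequiv} and the change of variable $\rho=-r^2/2$: every Taylor coefficient of $u^{\al'}$ below order $\rho^{k/2+1}$ vanishes, there is a freely prescribable $c\in\Gamma(N\Si)$ with $u^{\al'}=c^{\al'}\rho^{k/2+1}+O(\rho^{k/2+2})$, and $c$ determines the full expansion of $u^{\al'}$, with $\Yb_U=\Yb$ to infinite order exactly when $c=0$. It therefore suffices to show that \eqref{tensorialcondition} forces $c=0$.

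The core step is the computation of $\Lb^{\al'}_{\infty\infty;\underbrace{\scriptstyle \infty\cdots\infty}_{k/2-1}}\big|_{\rho=0}$ for $\Yb_U$. Writing $e_\cI=\pa_\cI+u^{\be'}{}_{,\cI}\pa_{\be'}$ and inserting the ambient Christoffel symbols \eqref{Christgt}, one obtains $\nt_{\pa_\rho}\pa_\rho=0$ and $\nt_{\pa_\rho}\pa_{\be'}=\nt_{\pa_{\be'}}\pa_\rho=\tfrac12\la(1+\tfrac12\la\rho)^{-1}\pa_{\be'}$; since $u^{\al'}=O(\rho^{k/2+1})$, the remaining contributions to $\nt_{e_\infty}e_\infty$ are quadratic in $u$ and hence $O(\rho^{k})$, so projecting onto $N\Yb$ along $e_{\al'}=\pa_{\al'}^\perp$ gives
\[
\Lb^{\al'}_{\infty\infty}=u^{\al'}{}_{,\rho\rho}+\la\big(1+\tfrac12\la\rho\big)^{-1}u^{\al'}{}_{,\rho}+O(\rho^{k}).
\]
A careful count of the orders in $\rho$ — again via \eqref{Christgt}, together with the fact that the Christoffel symbols of the induced connections satisfy $\Gb^{\cJ}_{\infty\infty}=O(\rho^{k-1})$ and $\Gb^{\al'}_{\infty\be'}=\tfrac12\la(1+\tfrac12\la\rho)^{-1}\de^{\al'}_{\be'}+O(\rho^{k/2})$ — shows that each of the $k/2-1$ covariant derivatives in the $\infty$ direction acts on $\Lb^{\al'}_{\infty\infty}$ as $\pa_\rho+\tfrac12\la(1+\tfrac12\la\rho)^{-1}$ up to an error which still vanishes at $\rho=0$ after all $k/2-1$ differentiations, and that the higher coefficients $u^{\al'}_{(j)}$, $j>k/2+1$ (which are determined by $c$), enter $\Lb^{\al'}_{\infty\infty}$ only at orders $\geq\rho^{k/2}$ and so do not survive the restriction to $\rho=0$. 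It follows that $\Lb^{\al'}_{\infty\infty;\infty\cdots\infty}\big|_{\rho=0}$ is $(k/2-1)!$ times the $\rho^{k/2-1}$-coefficient of $u^{\al'}{}_{,\rho\rho}$, corrected only by a term built from the lower coefficients $u^{\al'}_{(j)}$, $j\leq k/2$, all of which vanish; that is,
\[
\Lb^{\al'}_{\infty\infty;\underbrace{\scriptstyle \infty\cdots\infty}_{k/2-1}}\Big|_{\rho=0}=(k/2+1)(k/2)\,(k/2-1)!\;c^{\al'}.
\]
Thus \eqref{tensorialcondition} holds if and only if $c=0$, i.e.\ if and only if $\Yb_U$ agrees with $\Yb$ to infinite order.

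I expect the main obstacle to be precisely this order-counting: one must follow, through the $k/2-1$ iterated covariant differentiations, the orders in $\rho$ of $\Gb$ and of every component of $\nb^{r}\Lb$ that the corrections couple to (using Proposition~\ref{0components} to control components carrying a $0$ index), in order to confirm that neither the correction terms nor the higher-order tail of $\Lb^{\al'}_{\infty\infty}$ can feed the undetermined data $u^{\al'}_{(j)}$, $j>k/2+1$, back down to the $\rho^{k/2-1}$ level where $c$ resides. The overall scheme parallels the proof of the diffeomorphism invariance of the canonical ambient metric in Chapter~7 of~\cite{FG2}, and the bookkeeping resembles that in the proof of Proposition~\ref{Scomponents}.
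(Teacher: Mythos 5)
Your proposal is correct and follows essentially the same route as the paper: existence via Corollary~\ref{Lvanishing} applied to $u^{\al'}=0$, and uniqueness by computing $\Lb^{\al'}_{\infty\infty;\infty\cdots\infty}$ as $\pa_\rho^{k/2+1}u^{\al'}$ plus terms involving fewer $\rho$-derivatives of $u^{\al'}$ (which vanish at $\rho=0$ for any minimal extension, since $u^{\al'}=O(\rho^{k/2+1})$), so that \eqref{tensorialcondition} forces the one free coefficient at order $\rho^{k/2+1}$ to vanish and hence the extension to agree with $\R_+\times\Si\times(-\ep,0]$ to infinite order. The paper leaves the lower-order terms as ``lots'' rather than tracking explicit constants, but your coefficient $(k/2+1)(k/2)(k/2-1)!=(k/2+1)!$ is consistent with its formula $\Lb^{\al'}_{\infty\infty;\infty\cdots\infty_m}=\pa_\rho^{m+2}u^{\al'}+\text{lots}$.
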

\begin{proof}
  First calculate the leading term in $\Lb^{\al'}_{\infty\infty}$, which
  recall is defined by 
  $\left(\nt_{e_\infty}e_\infty\right)^\perp=\Lb^{\al'}_{\infty\infty}e_{\al'}$. 
  Recalling from~\eqref{Christgt} that $\nt_{\pa_\rho}\pa_\rho = 0$, we
  have 
\[
\begin{split}
\nt_{e_\infty}e_\infty 
&=\nt_{\pa_\rho+u^{\al'}{}_{,\rho}\pa_{\al'}}e_\infty
=\nt_{\pa_\rho}e_\infty +u^{\al'}{}_{,\rho}\nt_{\pa_{\al'}}e_\infty\\
&=\nt_{\pa_\rho}\left(\pa_\rho +u^{\al'}{}_{,\rho}\pa_{\al'}\right)
+u^{\al'}{}_{,\rho}\nt_{\pa_{\al'}}\left(\pa_\rho
+u^{\be'}{}_{,\rho}\pa_{\be'}\right)\\
&=(\pa_\rho^2u^{\al'})\pa_{\al'}
+2u^{\al'}{}_{,\rho}\nt_{\pa_\rho}\pa_{\al'}
+u^{\al'}{}_{,\rho}u^{\be'}{}_{,\rho\al'}\pa_{\be'}
+u^{\al'}{}_{,\rho}u^{\be'}{}_{,\rho}\nt_{\pa_{\al'}}\pa_{\be'}.  
\end{split}
\]
Since $e_{\al'}=(\pa_{\al'})^\perp$, we obtain
$\Lb^{\al'}_{\infty\infty} = \pa_\rho^2u^{\al'}+lots$, where $lots$ 
consists of terms involving fewer $\rho$-derivatives of $u^{\al'}$.

Now successive differentiation shows that 
$\Lb^{\al'}_{\infty\infty;\underbrace{\scriptstyle{\infty\cdots\infty}}_m}=\pa_\rho^{m+2}u^{\al'}+lots$ 
for $m\geq 0$.  In particular, this shows that
$\pa_\rho^{k/2+1}u^{\al'}\big|_{\rho=0}$ is uniquely determined by 
$\Lb^{\al'}_{\infty\infty;\underbrace{\scriptstyle{\infty\cdots\infty}}_{k/2-1}}\big|_{\rho=0}$
and the values of $\pa_\rho^ju^{\al'}\big|_{\rho =0}$ for $j\leq k/2$.   
We know that any minimal extension of $\Si$ satisfies
$u^{\al'}=O(\rho^{k/2+1})$, and $u^{\al'}$ vanishes to infinite order if
and only if $u^{\al'}=O(\rho^{k/2+2})$.  Corollary~\ref{Lvanishing} shows
that the choice $u^{\al'}=0$ makes 
$\Lb^{\al'}_{\infty\infty;\underbrace{\scriptstyle{\infty\cdots\infty}}_{k/2-1}}\big|_{\rho=0}=0$.    
\end{proof}

\noindent
{\it Proof of Theorem~\ref{extensioninvariance}.}
We have already observed that this follows from the uniqueness of the
even minimal extension when $k$ is odd.  So we can assume that $k\geq 2$ is 
even.

We prove the analogous statement for the diffeomorphism relating the
ambient metrics of $g$ and $\gh$.  
Let $\gth$ be the ambient metric on $\cGth=\R_+\times M\times (-\ep,\ep)$   
obtained by replacing $\gt$ by 
$\gth$ and $\sg_\rho$ by
$\widehat{\mathsf{g}}_\rho = (1+\tfrac12 \lh \rho)^2 \gh$
in~\eqref{ambientmetric}.  Let $\chi$ be a homogeneous diffeomorphism mapping
$\cGth$ to $\cGt$ which pulls back $\gt$ to $\gth$ to infinite order at
$\rho =0$ and restricts to the identity on $\cG$.  
Then $\chi$ is uniquely determined to infinite order.  
Let $\Yb\subset \cGt$ and $\Ybh\subset \cGth$ be the canonical extensions
of $\Si$ with respect to $g$ and $\gh$, respectively.
Proposition~\ref{canchar} implies that $\chi^{-1}(\Yb)=\Ybh$ to infinite
order if and only if   
$\chi^{-1}(\Yb)$ satisfies~\eqref{tensorialcondition}.  The condition
\eqref{tensorialcondition} transforms tensorially under the Jacobian of
$\chi$.  This Jacobian is identified at $\rho =0$ in terms of $\om$ 
in (6.8)-(6.10) of~\cite{FG2}.  It follows that 
$\chi^{-1}(\Yb)$ satisfies~\eqref{tensorialcondition} if and only if 
\begin{equation}\label{explicitcondition}
\Lb^{\al'}_{\cI\cJ;\cK_1\cdots\cK_{k/2-1}}
p^\cI{}_\infty p^\cJ{}_\infty p^{\cK_1}{}_\infty \cdots
p^{\cK_{k/2-1}}{}_\infty =0
\end{equation}
at $\rho=0$, where
\[
p^I{}_J=
\begin{pmatrix}
  1&\om_j&-\tfrac12 \om_k \om^k\\
  0&\delta^i{}_j&-\om^i\\
  0&0&1
\end{pmatrix}
\]
and $\Lb^{\al'}_{\cI\cJ;\cK_1\cdots\cK_{k/2-1}}$ refers to the covariant
derivative of $\Lb$ on $\Yb$.  (See the proof of Proposition 6.5 of
\cite{FG2}, and recall that $\om^{\al'}=0$ on $\Si$.) 

Expanding out~\eqref{explicitcondition}, one obtains a linear combination
with smooth coefficients of contractions of  
$\grad_b\om$ with tensors $\Lb^{(r)}_{\cS_0,\cS_\Si,\cS_\infty}$ in which
each index in $\cS_\Si$ is contracted against a factor of $\grad_b\om$.   
Write the tensor $\Lb^{(r)}_{\cS_0,\cS_\Si,\cS_\infty}$ as a linear
combination of tensors of the form $\na^sL\otimes(\otimes^mg|_{T\Si})$ as
in Proposition~\ref{Scomponents}.  Each index of 
each tensor $\na^sL$ which appears is contracted against a factor of
$\grad_b\om$.  Lemma~\ref{2omega} implies that all the terms vanish.    
\stopthm

\end{document}